\RequirePackage[l2tabu, orthodox]{nag}
\documentclass[reqno]{amsart}
\usepackage[letterpaper,top=2cm,bottom=2cm,left=3cm,right=3cm,
  marginparwidth=2cm]{geometry}

\usepackage{lmodern}
\usepackage[T1]{fontenc}
\usepackage[utf8]{inputenc}
\usepackage[english]{babel}
\usepackage{microtype}
\usepackage{amsmath,amssymb,amsthm,mathrsfs,mathtools,mathdots}
\usepackage{enumerate,booktabs,bm}
\usepackage{tikz}
\usetikzlibrary{decorations.pathmorphing,decorations.pathreplacing,arrows.meta,
  positioning,patterns}
\usepackage{xcolor}
\usepackage{hyperref}

\newtheorem{theorem}{Theorem}[section]
\newtheorem{proposition}[theorem]{Proposition}
\newtheorem{lemma}[theorem]{Lemma}
\newtheorem{corollary}[theorem]{Corollary}

\theoremstyle{definition}
\newtheorem{definition}[theorem]{Definition}
\newtheorem{example}[theorem]{Example}
\newtheorem{remark}[theorem]{Remark}
\newtheorem{question}[theorem]{Question}

\newtheorem*{theoremA}{Theorem A}
\newtheorem*{theoremB}{Theorem B}
\newtheorem*{corollaryC}{Corollary C}

\newcommand{\defin}[1]{%
\relax\ifmmode\textcolor{blue}{#1}%
\else\textcolor{blue}{\emph{#1}}\fi}

\newcommand{\setNN}{\mathbb{N}^{+}}
\newcommand{\setZ}{\mathbb{Z}}

\newcommand{\xvec}{\mathbf{x}}
\newcommand{\avec}{\mathbf{a}}
\newcommand{\elementaryE}{\mathrm{e}}
\newcommand{\LLT}{\operatorname{LLT}}

\newcommand{\Kp}{\mathrm{K}}

\DeclareMathOperator{\asc}{asc}
\DeclareMathOperator{\hrvop}{hrv}
\DeclareMathOperator{\mom}{mom}
\newcommand{\hrv}[2]{\hrvop(#1,#2)}
\newcommand{\Pplus}[1]{P_{+}(#1)}
\newcommand{\ORIENTS}{\mathcal{O}}
\newcommand{\Kv}{\mathsf{K}}
\newcommand{\Ev}{\mathsf{E}}
\newcommand{\Lay}{\mathcal{L}}
\newcommand{\Gplus}{\Gamma^{+}}

\newcommand{\estep}{\texttt{e}}
\newcommand{\dstep}{\texttt{d}}
\newcommand{\nstep}{\texttt{n}}

\definecolor{lightgray}{gray}{0.82}

\tikzset{
  every picture/.append style={baseline=(current bounding box.center)},
  vtx/.style={circle,draw,inner sep=1pt,minimum size=1.3em,font=\small},
  sel/.style={-{Stealth[length=2mm]},thick},
  strictedge/.style={-{Stealth[length=2.4mm]},line width=1.6pt,red!70!black},
  fibre1/.style={fill=yellow},
  fibre2/.style={fill=cyan!30,postaction={pattern=north east lines,
    pattern color=black!55}},
  fibre3/.style={fill=pink!50,postaction={pattern=dots,pattern color=black!60}},
  fibre23/.style={fill=cyan!30,postaction={pattern=crosshatch,
    pattern color=black!55}},
  part1/.style={fill=green!25,postaction={pattern=dots,pattern color=black!60}},
  part2/.style={fill=blue!15,postaction={pattern=north east lines,
    pattern color=black!50}},
  part3/.style={fill=orange!25,postaction={pattern=crosshatch,
    pattern color=black!50}},
  unsel/.style={densely dotted,gray},
}

\pgfdeclarelayer{lltback}
\pgfsetlayers{lltback,main}

\makeatletter
\newif\ifllt@labels
\pgfkeys{/llt/.cd,
  cell/.store in=\llt@cell, cell=1.1em,
  label font/.store in=\llt@labelfont, label font=\scriptsize,
  labels/.is if=llt@labels, labels=true,
  first/.store in=\llt@first, first=1,
  vertex fill/.store in=\llt@vfill, vertex fill=yellow}

\tikzset{
  llt/.style={/llt/.cd,#1,/tikz/.cd,x=\llt@cell,y=\llt@cell},
  llt/.default={},
  llt grid/.style={draw=black!55,line width=0.3pt},
  llt path/.style={line width=1.1pt,line cap=round,line join=round},
  llt arbitrary/.style={line width=1.1pt,dotted,line cap=round},
  llt diagonal/.style={black!45,line width=0.4pt},
  llt bounce/.style={red!75!black,line width=0.9pt,densely dashed,
    -{Stealth[length=1.6mm]}},
  llt asc/.style={-{Stealth[length=1mm]},line width=0.5pt},
  llt cover/.style={-{Stealth[length=2.2mm]},line width=1.4pt,black!45},
  llt dpath/.style={-{Stealth[length=2mm]},thick,densely dashed},
}

\newcount\llt@x
\newcount\llt@y
\newcount\llt@i

\newcommand{\LLTsteps}[4][llt path]{%
  \begingroup
  \pgfmathsetmacro\llt@px{#2}\pgfmathsetmacro\llt@py{#3}%
  \edef\llt@path{(\llt@px,\llt@py)}%
  \@tfor\llt@s:=#4\do{%
    \if n\llt@s\pgfmathsetmacro\llt@py{\llt@py+1}\fi
    \if e\llt@s\pgfmathsetmacro\llt@px{\llt@px+1}\fi
    \if d\llt@s\pgfmathsetmacro\llt@px{\llt@px+1}%
      \pgfmathsetmacro\llt@py{\llt@py+1}\fi
    \edef\llt@path{\llt@path--(\llt@px,\llt@py)}}%
  \draw[#1] \llt@path;%
  \endgroup}

\newcommand{\llt@row}{%
  \llt@i=\llt@x
  \loop\ifnum\llt@i<\llt@y
    \draw[llt grid] (\the\llt@i,\the\llt@y) rectangle +(1,1);%
    \advance\llt@i by 1
  \repeat
  \begin{pgfonlayer}{lltback}
    \fill[\llt@vfill] (\the\llt@y,\the\llt@y) rectangle +(1,1);%
  \end{pgfonlayer}%
  \draw[llt grid] (\the\llt@y,\the\llt@y) rectangle +(1,1);%
  \ifllt@labels
    \node[font=\llt@labelfont,inner sep=0pt]
      at (\the\llt@y+0.5,\the\llt@y+0.5) {$\the\numexpr\llt@y+\llt@first\relax$};%
  \fi}

\newcommand{\LLTdiagram}[1]{%
  \begin{scope}[shift={(\llt@first-1,\llt@first-1)}]
  \begingroup
  \llt@x=0 \llt@y=0
  \@tfor\llt@s:=#1\do{%
    \if n\llt@s\llt@row\advance\llt@y by 1 \fi
    \if d\llt@s\llt@row\advance\llt@x by 1 \advance\llt@y by 1 \fi
    \if e\llt@s\advance\llt@x by 1 \fi}%
  \endgroup
  \LLTsteps{0}{0}{#1}
  \end{scope}}

\newcommand{\LLTframe}[2]{%
  \path (-0.4,-0.4) rectangle (#1+0.4,#2+0.4);%
  \draw[llt diagonal] (-0.4,-0.4)--(#1+0.4,#1+0.4);}

\newcommand{\EdgePic}[4][2em]{%
  \begin{tikzpicture}[x=#1,y=#1,baseline={([yshift=-0.6ex]v1.center)},
      every node/.append style={minimum size=1.05em,font=\scriptsize}]
  \draw[llt arbitrary,line width=0.6pt] (0.4,0)--(#2+0.6,0);
  \foreach \l [count=\i] in {#3}{\node[vtx,fill=white] (v\i) at (\i,0) {$\l$};}
  \foreach \i/\j/\s in {#4}{%
    \pgfmathsetmacro\llt@ang{35+12*(\j-\i)}%
    \def\llt@st{sel}%
    \if\s s\def\llt@st{strictedge}\fi
    \if\s c\def\llt@st{llt cover}\fi
    \if\s p\def\llt@st{llt dpath}\pgfmathsetmacro\llt@ang{\llt@ang+30}\fi
    \if\s b\draw[sel] (v\i) to[out=-\llt@ang,in=180+\llt@ang] (v\j);%
    \else\draw[\llt@st] (v\i) to[out=\llt@ang,in=180-\llt@ang] (v\j);\fi}
  \end{tikzpicture}}

\newcommand{\LLTasc}[1]{%
  \foreach \u/\v in {#1}{\draw[llt asc] (\u-0.82,\v-0.5)--(\u-0.18,\v-0.5);}}

\newcommand{\LLTvertex}[2]{%
  \foreach \v in {#1}{%
    \begin{pgfonlayer}{lltback}
      \fill[#2] (\v-1,\v-1) rectangle (\v,\v);
    \end{pgfonlayer}}}

\newcommand{\LLTshade}[2]{%
  \foreach \u/\v in {#1}{%
    \begin{pgfonlayer}{lltback}
      \fill[#2] (\u-1,\v-1) rectangle (\u,\v);
    \end{pgfonlayer}}}

\newcommand{\LLTmark}[2]{%
  \foreach \u/\v in {#1}{%
    \node[font=\llt@labelfont,inner sep=0pt] at (\u-0.5,\v-0.5) {#2};}}

\newcommand{\LLTpic}[3][]{%
  \begin{tikzpicture}[llt={#1}]\LLTdiagram{#2}#3\end{tikzpicture}}
\makeatother

\newcommand{\FTpic}[2]{%
  \begin{tikzpicture}[x=2.05em,y=2.2em,
      every node/.append style={minimum size=1.15em,font=\scriptsize}]
  \draw[llt arbitrary,line width=0.6pt] (-0.6,0)--(2.6,0);
  \node[vtx,fill=white] (u) at (0,0) {$u$};
  \node[vtx,fill=white] (v) at (1,0) {$v$};
  \node[vtx,fill=white] (w) at (2,0) {$w$};
  \draw[sel] (u) to[out=70,in=110,looseness=1.1] (w);
  \ifnum#1=1 \draw[sel] (u) to[out=50,in=130] (v);\fi
  \ifnum#2=1 \draw[sel] (v) to[out=50,in=130] (w);\fi
  \end{tikzpicture}}

\title[Flip classes of vertical-strip LLT polynomials]{%
Flip classes and elementary expansions\\
of vertical-strip LLT polynomials}

\author{Per Alexandersson}
\address{Department of Mathematics, Stockholm University,
  SE-106 91 Stockholm, Sweden}
\email{per.w.alexandersson@gmail.com}

\author{Lilan Dai}
\address{Center for Combinatorics, LPMC, Nankai University,
  Tianjin 300071, P.~R.~China}
\email{daililan@mail.nankai.edu.cn}

\keywords{LLT polynomials, elementary symmetric functions, unit-interval graphs,
  orientations, flips}
\subjclass[2020]{Primary~05E05; Secondary~05A19, 05C20}

\begin{document}

\begin{abstract}
After the substitution $q\mapsto q+1$, vertical-strip LLT polynomials are
weighted sums of strict $P$-partition generating functions indexed by
orientations of decorated unit-interval graphs, with weight
$q^{\asc(\theta)}$.
A flip moves an ascending edge across a triangle while keeping its long edge.
We show that the sum over every flip class is elementary-positive: it equals the
sum of the elementary symmetric functions given by the highest reachable
vertices of its orientations.
This answers Hwang's question on the Schur positivity of LLT flip-class sums,
and it holds for circular unit-interval graphs as well.
Summing over classes gives a new proof of the elementary expansion of
Alexandersson and Sulzgruber, and its circular analog, which was open.
We also obtain a flip-class refinement of the elementary expansion of
LLT cumulants.
\end{abstract}

\maketitle

\setcounter{tocdepth}{1}
\tableofcontents

\section{Introduction}\label{sec:introduction}

LLT polynomials were introduced by A.~Lascoux, B.~Leclerc and J.-Y.~Thibon as
$q$-deformations of products of Schur functions~\cite{LLT1997}.
I.~Grojnowski and M.~Haiman proved that they are Schur-positive
\cite{GrojnowskiHaiman2006}.
LLT polynomials are the building blocks of the combinatorial formula for
Macdonald polynomials of J.~Haglund, M.~Haiman and
N.~Loehr~\cite{HaglundHaimanLoehr2005}, and of the Shuffle Theorem, conjectured
in~\cite{HaglundHaimanLoehrRemmelUlyanov2005} and proved by E.~Carlsson and
A.~Mellit~\cite{CarlssonMellit2017}; see also~\cite{qtCatalanBook}.

The vertical-strip LLT polynomials form a subfamily which can be described by
colorings of decorated unit-interval graphs~\cite{AlexanderssonPanova2018}: if
$\Gamma$ is such a graph on $[n]$, then
\[
 \LLT_\Gamma(\xvec;q)=\sum_\kappa q^{\asc(\kappa)}x_{\kappa(1)}x_{\kappa(2)}\dotsm x_{\kappa(n)},
\]
where $\kappa$ ranges over the colorings which increase strictly along the
strict edges, and $\asc(\kappa)$ counts the other edges along which $\kappa$
increases.
Without strict edges, they are the unicellular LLT polynomials, which are
related by a plethystic substitution to the chromatic quasisymmetric functions of
J.~Shareshian and M.~Wachs~\cite{ShareshianWachs2012,ShareshianWachs2016},
see~\cite{CarlssonMellit2017,GuayPaquet2016x}.
For chromatic symmetric functions, elementary positivity is the content of the
Stanley--Stembridge conjecture~\cite{StanleyStembridge1993,Stanley1995}.
M.~Guay-Paquet reduced it to unit-interval graphs~\cite{GuayPaquet2013}, and
T.~Hikita proved that each elementary coefficient is nonnegative for every
fixed $q>0$~\cite[Cor.~1.2]{Hikita2026}; the case $q=0$ follows by
specialization.
OpenAI subsequently proved the coefficientwise refinement of Shareshian and
Wachs~\cite{ShareshianWachs2016}: for every natural unit-interval graph, these
coefficients belong to $\mathbb{N}[q]$~\cite[Thm.~1.1]{OpenAI2026}.

The first named author and R.~Sulzgruber proved that, after the substitution
$q\mapsto q+1$, the vertical-strip LLT polynomials expand positively in the
elementary symmetric functions~\cite{AlexanderssonSulzgruber2022}, as conjectured
in~\cite{GarsiaHaglundQiuRomero2019,Alexandersson2019llt}; the positivity was
obtained independently by M.~D'Adderio~\cite{DAdderio2020}.
The expansion is
\begin{equation}\label{eq:introAS}
 \LLT_\Gamma(\xvec;q+1)=\sum_{\theta}q^{\asc(\theta)}\elementaryE_{\lambda(\theta)}(\xvec).
\end{equation}
Here $\theta$ ranges over the orientations of the normal edges of $\Gamma$,
$\asc(\theta)$ is the number of ascending edges, those oriented from the smaller
to the larger vertex, and $\lambda(\theta)$ records the sizes of the sets of vertices with the same highest reachable vertex.
The proof in~\cite{AlexanderssonSulzgruber2022} shows that both sides
of~\eqref{eq:introAS} satisfy a system of linear relations which determines them.
Local linear relations of this kind go back to the modular law of
M.~Guay-Paquet~\cite{GuayPaquet2013} and to the relations of
S.~J.~Lee~\cite{Lee2018x}; see also~\cite{AbreuNigro2020ax}.
The analogous question for circular unit-interval graphs, whose chromatic
quasisymmetric functions were studied by B.~Ellzey~\cite{Ellzey2016,Ellzey2017},
was left open in~\cite[Sec.~6.8]{AlexanderssonSulzgruber2022}.

The same polynomial has a simpler expansion.
Expanding $(1+q)^{\asc(\kappa)}$ shows that
\begin{equation}\label{eq:introK}
 \LLT_\Gamma(\xvec;q+1)=\sum_{\theta}q^{\asc(\theta)}\Kp_{\theta}(\xvec),
\end{equation}
where $\Kp_\theta$ is the generating function of strict $P$-partitions for the
partial order $P=\Pplus{\theta}$ generated by the strict edges and the ascending
edges of $\theta$.
The summands $\Kp_{\theta}$ need not be symmetric functions.
The situation is the same for chromatic symmetric functions, which are sums of
strict $P$-partition generating functions over the acyclic orientations of a
graph~\cite{Stanley1995}.
B.-H.~Hwang introduced flips of acyclic orientations, which reverse a fork of two
covers, and showed that the corresponding sums over flip classes are
symmetric~\cite[Thm.~3.8]{Hwang2022x} and Schur-positive.
He conjectured that they are elementary-positive~\cite[Conj.~3.10]{Hwang2022x}.
He also mentioned an analogous flip on orientations for LLT polynomials,
suggested by the first named author, and asked whether its class sums are
Schur-positive~\cite[Sec.~5.3]{Hwang2022x}.
This is the flip of the present paper.

\subsection*{Main results}

Let $u<v<w$ and let $\theta$ be an orientation in which $uv$ is ascending and
$vw$ is not.
If $uv$ and $uw$ are covers of $\Pplus{\theta}$, and $vw$ and $uw$ are covers
after reversing $uv$ and $vw$, we say that the two orientations differ by a
\emph{flip}, see Definition~\ref{def:flip}.
Thus a flip keeps a long edge $uw$ and moves an ascending short edge across the
triangle.
A flip preserves $\asc(\theta)$ but need not preserve $\Kp_{\theta}$ or
$\lambda(\theta)$.

\begin{theoremA}[Theorem~\ref{thm:main}]
Let $\Gamma$ be an ordinary or circular decorated unit-interval graph, and let
$C$ be a flip class of orientations of $\Gamma$.
Then
\[
 \sum_{\theta\in C}\Kp_{\theta}(\xvec)=\sum_{\theta\in C}\elementaryE_{\lambda(\theta)}(\xvec).
\]
\end{theoremA}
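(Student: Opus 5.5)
We will prove Theorem A by turning each side into a signed count of fundamental quasisymmetric expansions, or better, into a count of pairs $(\theta,\kappa)$, and then building a weight-preserving bijection on each flip class. Concretely, $\sum_{\theta\in C}\Kp_\theta$ is the generating function of pairs $(\theta,\kappa)$ with $\theta\in C$ and $\kappa$ a strict $\Pplus{\theta}$-partition. The right side $\elementaryE_{\lambda(\theta)}$ is the generating function of colorings $\kappa$ that are strictly increasing along each fibre of the highest-reachable-vertex map, that is, within each block of vertices sharing the same highest reachable vertex. So both sides count pairs $(\theta,\kappa)$ over the same index set $C$, and it suffices to produce, within $C$, a bijection between the two families of pairs that preserves the multiset of colors.

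The first step is structural: describe a canonical representative of each flip class. Flips move ascending short edges across triangles with a fixed long edge. We expect each class to contain a unique orientation in which every ascending edge is ``as low as possible'' (no flip can be applied in the direction $uv\to vw$ reversed). For that orientation we would show that $\Pplus{\theta}$ restricted to each fibre is a chain on the fibre, and that distinct fibres are incomparable except through strict edges. This would give $\Kp_\theta=\elementaryE_{\lambda(\theta)}$ up to correction terms. The corrections are carried by the other members of the class.

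The second step is the bijection, which we would build as a sorting or involution argument. Given $(\theta,\kappa)$ with $\kappa$ a strict $\Pplus{\theta}$-partition, look for the smallest triangle $u<v<w$ with $uw$ a cover at which $\kappa$ violates the fibre condition. Since $\kappa$ must strictly increase along $uw$, we get $\kappa(u)<\kappa(w)$, so exactly one of $\kappa(u)<\kappa(v)$ or $\kappa(v)<\kappa(w)$ is the ``free'' relation. Flipping $\theta$ at that triangle transfers the ascending edge to the position consistent with $\kappa$, keeping the coloring. This defines a map $(\theta,\kappa)\mapsto(\theta',\kappa)$ inside $C$, and we then show it relates strict $\Pplus{\theta}$-partitions to colorings strictly increasing on fibres. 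We would do the verification by induction on $n$, removing the largest vertex $n$ (or a suitable vertex in the circular case) and using that flip classes restrict well to induced subgraphs on $[n-1]$.

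The main obstacle is the second step. A flip changes $\lambda(\theta)$ and $\Pplus{\theta}$ nonlocally, because the highest reachable vertex can change for many vertices at once. So the bijection must be shown to be well defined and invertible even when several triangles interact. We would first try to replace the bijection by a sign-free recursion: show that both sides of Theorem A, summed over a flip class, satisfy the same deletion recurrence in the top vertex. Concretely, $\Kp$ via the standard $P$-partition splitting on the maximal element, and $\elementaryE_\lambda$ via adding $n$ to the fibre it reaches. We would check that the flip class decomposes compatibly with this splitting. The circular case would then follow by the same recursion, cutting at a vertex that is not the interior of any flippable triangle.
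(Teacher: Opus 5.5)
Your first two steps already fail on the smallest nontrivial class, $C=\{\{12,13\},\{13,23\}\}$ in the complete graph on $[3]$ (Example~\ref{ex:smallestFlip}).

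\textbf{The canonical representative.} Your ``lowest'' representative is $\theta=\{12,13\}$, with fibres $\{1,3\}$ and $\{2\}$. But $1<2$ in $\Pplus{\theta}$ through a normal edge, so $\Kp_\theta\neq\elementaryE_{21}$. For $\theta'=\{13,23\}$ the fibre $\{1,2,3\}$ is not a chain, so $\Kp_{\theta'}\neq\elementaryE_{3}$. No member of a class need satisfy $\Kp_\theta=\elementaryE_{\lambda(\theta)}$, so your ``correction terms'' are the whole theorem.

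\textbf{The involution.} Given $\kappa(u)<\kappa(w)$, it is false that exactly one of $\kappa(u)<\kappa(v)$, $\kappa(v)<\kappa(w)$ holds. At least one holds, and both hold exactly when $\kappa(u)<\kappa(v)<\kappa(w)$. This is the identity $[a<b]+[b<c]=1+[a<b<c]$ behind Lemma~\ref{lem:fourTerm}, which gives $\Kp_\theta+\Kp_{\theta'}=\Kp_{\theta-uv}+\Kp_{\theta+vw}$. The surplus is carried by orientations with one edge fewer and one edge more, which lie outside $C$. So no flip that keeps $\kappa$ can be the bijection you want. Moreover, the triangle you select need not satisfy the cover conditions of Definition~\ref{def:flip}, so $\theta'$ need not lie in $C$. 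And a map $(\theta,\kappa)\mapsto(\theta',\kappa)$ sends strict $P$-partitions to strict $P$-partitions; it never produces colourings increasing on fibres. A bijective proof is exactly what the paper leaves open in its second question.

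\textbf{The fallback recursion.} It does not repair this. Flip classes do not restrict to flip classes on $[1,n-1]$. A flip with long edge $un$ removes $uv$ from the restriction and adds nothing, so the restricted set is neither closed under flips nor of constant size. Removing $n$ also changes the highest reachable vertex of every vertex that reached $n$. Fixing the colour of one vertex splits $\Kv$ and $\Ev$ compatibly only at a cut vertex, where classes are products (Lemma~\ref{lem:gluing}).

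\textbf{The missing idea.} You need a way to isolate a single class, given that the local identity leaves it. The paper does this in three steps:
\begin{enumerate}[(i)]
\item it sums Lemma~\ref{lem:fourTerm} over whole layers $\Lay_s(\Gamma)$ to obtain the NN and DN relations (Proposition~\ref{prop:layer});
\item it proves that all connected orientations with a fixed number of edges in which every corner is a cover form one class (Theorem~\ref{thm:connectivity});
\item it expands every other class of the layer through graphs with fewer natural edges, with the same integer coefficients for $\Kv$ and $\Ev$ (Lemmas~\ref{lem:absentCorner}, \ref{lem:disconnected} and~\ref{lem:noCover}), and transports this from the prefix by gluing.
\end{enumerate}

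\textbf{The circular case.} Cutting at a vertex changes $\Kp_\theta$ and the lifted highest reachable vertices of orientations with edges across the cut. A vertex interior to no triangle also need not exist: in the complete circular graph on $n\geq3$ vertices, every vertex is the middle of a triangle. The paper instead expands each admissible orientation into balanced sub-orientations (Proposition~\ref{prop:balancedExpansion}). It then shows the resulting coefficients are constant on flip classes (Lemma~\ref{lem:constancy}) and unrolls balanced classes onto a line (Proposition~\ref{prop:balancedClass}).
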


For example, in the complete graph on three vertices the two orientations with
ascending edges $\{12,13\}$ and $\{13,23\}$ form a flip class, and
\[
 \Kp_{\{12,13\}}+\Kp_{\{13,23\}}=\elementaryE_{21}+\elementaryE_3,
\]
see Example~\ref{ex:smallestFlip}.
Thus every class sum is elementary-positive, and in particular Schur-positive,
which answers the question of Hwang mentioned above.
Summing Theorem~A over all flip classes and comparing~\eqref{eq:introK}
with~\eqref{eq:introAS} we obtain the following.

\begin{theoremB}[Corollary~\ref{cor:circularExpansion}]
The expansion~\eqref{eq:introAS} holds for every circular decorated unit-interval
graph.
Here $\theta$ ranges over the admissible orientations, in which the strict and
ascending edges form no directed cycle, and highest reachable vertices are
computed on the lift to $\setZ$.
\end{theoremB}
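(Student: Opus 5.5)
Theorem~B follows by summing Theorem~A over all flip classes and comparing with the expansion~\eqref{eq:introK}, written for the circular setting. The plan has three steps.

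\textbf{Step 1: the circular analogue of~\eqref{eq:introK}.} For a circular decorated unit-interval graph $\Gamma$, write
\[
 \LLT_\Gamma(\xvec;q+1)=\sum_\kappa \prod_{e}(1+q)^{[\kappa\text{ ascends on }e]}
\]
over colorings $\kappa$ that are strictly increasing on the strict edges. Expand each factor $(1+q)^{\asc(\kappa)}$ as a sum over subsets $A$ of the ascending edges of $\kappa$, with weight $q^{|A|}$. Then exchange the order of summation: a pair $(\kappa,A)$ is the same as an orientation $\theta$ whose ascending set is exactly $A$ (all other normal edges oriented descending), together with a coloring that strictly increases along the strict edges and along $A$.

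On a circular graph, "ascending" is defined via the cyclic order or the lift to $\setZ$. If the strict edges together with $A$ contain a directed cycle, no such coloring exists. Hence only admissible orientations contribute, and each contributes $q^{\asc(\theta)}\Kp_\theta$, where $\Kp_\theta$ is the generating function of strict $\Pplus{\theta}$-partitions. This gives~\eqref{eq:introK} for circular graphs, with $\theta$ ranging over admissible orientations.

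\textbf{Step 2: organise the sum by flip classes.} Since a flip preserves $\asc$, the sum over admissible orientations splits as a sum over flip classes $C$, with a common weight $q^{\asc(C)}$ on each class. We must also check that flips preserve admissibility, so that the classes partition exactly the set of admissible orientations. I expect Definition~\ref{def:flip}, which requires $uv,uw$ and then $vw,uw$ to be covers of a poset, to presuppose acyclicity on both sides, so this should be immediate.

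\textbf{Step 3: apply Theorem~A class by class.} Replacing $\sum_{\theta\in C}\Kp_\theta$ by $\sum_{\theta\in C}\elementaryE_{\lambda(\theta)}$ on each class and reassembling gives~\eqref{eq:introAS}. Here $\lambda(\theta)$ is computed from highest reachable vertices on the lift to $\setZ$, consistent with the convention used in Theorem~A.

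\textbf{Main obstacle.} The substantive content is Theorem~A itself; Theorem~B is then formal. The step needing care is Step~1 in the circular case. One must check that the notions of ascent and of the poset $\Pplus{\theta}$ used in Theorem~A agree with the expansion of $(1+q)^{\asc(\kappa)}$. In particular, the wrap-around edges must be handled through the periodic lift, so that $\Pplus{\theta}$ is the quotient of a periodic poset on $\setZ$. One must also confirm that the non-admissible orientations contribute zero rather than being silently dropped from the statement.
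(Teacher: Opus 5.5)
Your proposal is correct and follows the paper's proof: the circular form of Lemma~\ref{lem:stateExpansion} (summing over admissible orientations, since a directed cycle admits no strictly increasing coloring), then grouping by flip classes on which $\asc$ is constant, then applying Theorem~\ref{thm:main} class by class. Two small remarks: your first display is missing the factor $x_\kappa$, and admissibility needs no check in Step~2, because circular flips are only defined between admissible orientations.
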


For ordinary graphs this is~\eqref{eq:introAS} itself, and our proof does not use
the recursion of~\cite{AlexanderssonSulzgruber2022}.
Theorem~A refines~\eqref{eq:introAS}: every flip class contributes an
elementary-positive summand $q^{\asc(C)}\sum_{\theta\in C}\elementaryE_{\lambda(\theta)}$.
The same refinement applies to the LLT cumulants of M.~Dołęga and
M.~Kowalski~\cite{DolegaKowalski2021x}.

\begin{corollaryC}[Corollary~\ref{cor:cumulant}]
The LLT cumulants of circular decorated unit-interval graphs, after
$q\mapsto q+1$, are sums of elementary-positive contributions of flip classes.
\end{corollaryC}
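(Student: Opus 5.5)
The plan is to express the cumulant as a sum over a union of flip classes, and then apply Theorem~A class by class. I take the Dołęga--Kowalski cumulants $\kappa_\Gamma$ as defined by Möbius inversion over set partitions of the vertex set:
\[
\LLT_\Gamma=\sum_{\pi}\prod_{B\in\pi}\kappa_{\Gamma|_B}.
\]
Here $\Gamma|_B$ is the induced decorated (circular) unit-interval graph on the block $B$, relabelled order-preservingly. These relations determine the $\kappa$'s uniquely by induction on the number of vertices.

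The first step is to find a combinatorial candidate for $\kappa_\Gamma(\xvec;q+1)$ using the expansion~\eqref{eq:introK}. For an admissible orientation $\theta$, let $\pi(\theta)$ be the partition of the vertices into the connected components of the graph formed by the strict and ascending edges, i.e.\ of the Hasse diagram of $\Pplus{\theta}$. Call $\theta$ \emph{connected} if $\pi(\theta)$ has one block. Since strict $P$-partitions of a disjoint union of posets are independent on the components, we get
\[
\Kp_\theta=\prod_{B\in\pi(\theta)}\Kp_{\theta|_B}.
\]
Conversely, an admissible orientation with $\pi(\theta)=\pi$ is the same data as a tuple of connected admissible orientations $\theta|_B$ of the blocks. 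The normal edges between different blocks are forced to be non-ascending, and no strict edge can join two blocks. So the edges between blocks contribute the factor $q^0$, and $\asc(\theta)=\sum_B\asc(\theta|_B)$. Hence
\[
\LLT_\Gamma(\xvec;q+1)=\sum_\pi\prod_{B\in\pi}\widetilde\kappa_{\Gamma|_B},
\qquad
\widetilde\kappa_\Gamma:=\sum_{\theta\text{ connected}}q^{\asc(\theta)}\Kp_\theta .
\]
By uniqueness of the inversion, $\kappa_\Gamma(\xvec;q+1)=\widetilde\kappa_\Gamma$.

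The second step is to show that a flip preserves connectedness. A flip changes only the edges $uv$ and $vw$ of a triangle $u<v<w$. Before the flip, $u,v,w$ are joined by the covers $uv,uw$; after it, they are joined by the covers $vw,uw$. All other strict and ascending edges are unchanged, so the component partition is unchanged. Thus the connected orientations form a union of flip classes. Applying Theorem~A to each class $C$ (flips preserve $\asc$) gives
\[
\kappa_\Gamma(\xvec;q+1)=\sum_{C\ \text{connected}}q^{\asc(C)}\sum_{\theta\in C}\elementaryE_{\lambda(\theta)}(\xvec),
\]
which is the claimed decomposition into elementary-positive contributions of flip classes.

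The main obstacle I expect is making the first step rigorous in the circular setting, in three places:
\begin{itemize}
\item checking that an induced subgraph on a block is again a circular (or ordinary) decorated unit-interval graph, with its ascending edges and admissibility matching those computed in $\Gamma$ via the lift to $\setZ$;
\item checking that the inter-block edges really are forced and never create a directed cycle with the strict edges;
\item checking that the Dołęga--Kowalski definition coincides with the Möbius-inversion normalization used above.
\end{itemize}
If their definition instead uses the cyclic orientation convention for inter-block edges, I would redo the bijection with that convention. The factorization argument is otherwise unchanged.
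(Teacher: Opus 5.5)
\textbf{The gap is in your first step, and it breaks down as soon as $\Gamma$ has strict edges.} The rest of your route matches the paper: Möbius inversion turns the cumulant into a sum of $q^{\asc(\theta)}\Kp_\theta$ over connected orientations, a flip preserves the components of $\theta\cup S$ (Lemma~\ref{lem:flipInvariants}), and Theorem~\ref{thm:main} is applied class by class.

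The problem is that you invert over \emph{all} set partitions $\pi$ of the vertex set. Your remark that no strict edge joins two blocks is true for the blocks of $\pi(\theta)$. The reverse direction of your bijection fails, however. Suppose $\pi$ separates the endpoints of a strict edge. Then a tuple of connected orientations of the blocks $\Gamma|_B$ glues to an orientation whose component partition is strictly coarser than $\pi$. So $\prod_{B\in\pi}\widetilde\kappa_{\Gamma|_B}$ has no counterpart in $\LLT_\Gamma(\xvec;q+1)$.

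Take two vertices with the strict edge $12$. Then $\LLT_\Gamma=\widetilde\kappa_\Gamma=\elementaryE_2$, and your inversion gives $\kappa_\Gamma=\elementaryE_2-\elementaryE_{11}$, which is not elementary-positive. So under your definition the statement is false, not merely unproved.

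\textbf{The fix is the setup of~\cite{DolegaKowalski2021x} that the paper uses.} Take a surjection $f$ from the vertices onto $[r]$ which is constant along every strict edge. Invert over $\Pi_r$ with the blocks $f^{-1}(B)$, and include the normalization $(q-1)^{1-r}$. Since no strict edge joins two different sets $f^{-1}(B)$, the orientations without ascending edges between these sets are exactly the tuples of orientations of the graphs $\Gamma[B]$. This is also where admissibility factors in the circular case. The Möbius function of $\Pi_r$ then leaves exactly the $f$-connected orientations, with exponent $\asc(\theta)+1-r\geq0$ (Proposition~\ref{prop:cumulant}).

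Being $f$-connected depends only on the components of $\theta\cup S$. So your flip-invariance argument and the classwise use of Theorem~\ref{thm:main} go through unchanged once you work in this setting.

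As written, your argument only covers the special case $S=\emptyset$ with $f$ the identity, and your $\kappa$ differs from the cumulant there by the factor $q^{r-1}$. It says nothing about cumulants in which several vertices share a block of $f$.
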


\subsection*{Method}

The proof of Theorem~A for ordinary graphs uses one local identity: for three
vertices $u<v<w$ and any fixed set of other edges containing $uw$, the four ways
of adding the short edges $uv$ and $vw$ satisfy
\[
 \Psi(\text{with } uv)+\Psi(\text{with } vw)=\Psi(\text{with neither})+\Psi(\text{with both}),
\]
both for $\Psi=\Kp$ and for $\Psi=\elementaryE_\lambda$, see Lemma~\ref{lem:fourTerm}.
Summed over all orientations with a fixed number of edges, it gives refined
forms of the two basic bounce relations of~\cite{AlexanderssonSulzgruber2022}.
To pass from all orientations to a single flip class, we prove a connectivity
theorem: in a graph without articulations, all connected orientations with a
given number of ascending edges in which every corner is a cover form one flip
class.
For every other flip class $C'$ of that size, the two class sums $\Kv(C')$ and
$\Ev(C')$ from Theorem~A are expressed, with the same coefficients, as linear
combinations of class sums for graphs with fewer natural edges.
A graph with articulations is cut at the end of its first block, and the
relation is transported to the whole graph by fixing the color of the cut vertex.
The circular case is reduced to the ordinary one: a finite signed expansion
writes every admissible orientation in terms of orientations whose edge lengths
are differences of a potential.  After summing over a flip class, the resulting
coefficients are constant on the flip classes indexing the expansion, and the
flip classes of balanced orientations can be drawn on a line.

Theorem~A for ordinary and circular graphs, and the elementary expansions
obtained from it, have been formally verified in the Lean proof assistant using
Mathlib; the formalization is available at
\url{https://github.com/PerAlexandersson/VerticalStripLLT}.

\subsection*{Outline}

Section~\ref{sec:preliminaries} contains the definitions and the precise
statements, and Section~\ref{sec:local} proves the four-term identity for both
sides of Theorem~A.
Section~\ref{sec:ordinary} uses this identity, several reductions and a gluing
argument to prove the ordinary case, assuming the connectivity theorem proved
in Section~\ref{sec:connectivity}.
That section connects the relevant orientations by reducing to explicit edge
exchanges and complete graphs.
Section~\ref{sec:circular} first expands into balanced orientations, then groups
the expansion by flip classes and reduces each balanced class to ordinary
graphs.
Finally, Section~\ref{sec:consequences} derives the elementary expansion and the
result on cumulants, and ends with two questions.

\section{Preliminaries}\label{sec:preliminaries}

\subsection{Symmetric functions and \texorpdfstring{$P$}{P}-partitions}

We use symmetric functions in $\xvec=(x_1,x_2,\dotsc)$ with coefficients in
$\setZ[q]$, see~\cite{Stanley1972} for the background on $P$-partitions.
For an integer partition $\lambda$ we write
$\elementaryE_\lambda=\elementaryE_{\lambda_1}\elementaryE_{\lambda_2}\dotsm$.
For a map $\kappa$ from a finite set $V$ to $\setNN$ we put
$x_\kappa\coloneqq\prod_{u\in V}x_{\kappa(u)}$.

Let $P$ be a partial order on a finite set $V$.
A map $\kappa:V\to\setNN$ is a \defin{strict $P$-partition} if
$\kappa(u)<\kappa(v)$ whenever $u<_P v$, and we set
\[
 \defin{\Kp(P)}\coloneqq\sum_{\kappa}x_\kappa,
\]
the sum ranging over all strict $P$-partitions.
This is a quasisymmetric function, and it is symmetric when $P$ is a disjoint
union of chains: a chain with $k$ elements contributes $\elementaryE_k$.
We write $u\lessdot_P v$ when $v$ \defin{covers} $u$ in $P$.
If a set of directed edges generates a partial order $P$, then $\Kp(P)$ counts
the maps that increase strictly along every edge, and every cover of $P$ is one
of the edges.

\subsection{Decorated unit-interval graphs}

An \defin{area sequence} is a tuple $\avec=(a_1,\dotsc,a_n)$ of nonnegative
integers with $a_1=0$ and $a_{i+1}\leq a_i+1$.
We put $\defin{b(i)}\coloneqq i-a_i$, so that $b$ is weakly increasing.
A pair $uv$ with $u<v$ is a \defin{natural edge} if $b(v)\leq u$.
The natural edges form a unit-interval graph: if $uw$ is a natural edge and
$u<v<w$, then $uv$ and $vw$ are natural edges.
There are $a_v$ natural edges ending at $v$, so $\sum_i a_i$ natural edges in
total.

A natural edge $uv$ is a \defin{corner} if no other natural edge $u'v'$
satisfies $u'\leq u$ and $v\leq v'$.
Equivalently, $u=b(v)$, and either $v=n$ or $b(v+1)>b(v)$.
Every natural edge lies inside a corner, and distinct corners have distinct
tails and distinct heads.
A \defin{decorated unit-interval graph} $\Gamma=(\avec,S)$ is an area sequence
together with a set $S$ of corners, the \defin{strict edges}.
The other natural edges are \defin{normal}.
Decorated unit-interval graphs are the graphs of Schröder paths
in~\cite[Sec.~2.6]{AlexanderssonSulzgruber2022}: the natural edges are the
cells below the Dyck path obtained by replacing every diagonal step by a north
step followed by an east step, and the strict edges are the cells of the
diagonal steps.

We draw $\Gamma$ as in~\cite{AlexanderssonSulzgruber2022}.
The diagonal box in row $v$ is the vertex $v$, and the cell in column $u$ and
row $v$, for $u<v$, is the pair $uv$.
The natural edges are the cells between the Schröder path and the diagonal, and
the diagonal steps of the path cross the strict edges.

\begin{example}\label{ex:running}
The Schröder path $\mathtt{nnnddeneee}$
of~\cite[Ex.~2.8]{AlexanderssonSulzgruber2022} gives the area sequence
$\avec=(0,1,2,3,3,2)$ with strict edges $14$ and $25$.
The corners are $14$, $25$ and $46$; only the first two are strict.
In the diagram below, arrows mark the ascending edges of an orientation
$\theta$, defined in the next subsection:
\[
\LLTpic{nnnddeneee}{\LLTasc{1/3,2/3,2/4,3/4,5/6}}
\]
\end{example}

\subsection{Colorings and orientations}

A \defin{coloring} of $\Gamma$ is a map $\kappa:[n]\to\setNN$ with
$\kappa(u)<\kappa(v)$ for every strict edge $uv$.
An \defin{ascent} of $\kappa$ is a normal edge $uv$ with $\kappa(u)<\kappa(v)$,
and $\asc(\kappa)$ is the number of ascents.
The \defin{vertical-strip LLT polynomial} is
\[
 \defin{\LLT_\Gamma(\xvec;q)}\coloneqq\sum_{\kappa}q^{\asc(\kappa)}x_\kappa,
\]
the sum ranging over all colorings of $\Gamma$.
It is a symmetric function~\cite{AlexanderssonPanova2018,HaglundHaimanLoehr2005},
and every vertical-strip LLT polynomial of~\cite{LLT1997} arises in this
way~\cite{AlexanderssonPanova2018,CarlssonMellit2017}.

An \defin{orientation} $\theta$ of $\Gamma$ directs every normal edge.
A normal edge $uv$ with $u<v$ is \defin{ascending} in $\theta$ if it is directed
from $u$ to $v$, and $\asc(\theta)$ is the number of ascending edges.
Strict edges are always directed upwards.
We write $\ORIENTS(\Gamma)$ for the set of orientations, and we identify
$\theta$ with its set of ascending edges.
We regard the set $\theta\cup S$ of ascending and strict edges as a directed graph
on $[n]$.
All its edges point upwards, so its transitive closure is a partial order
$\Pplus{\theta}$ on $[n]$, and we write $\defin{\Kp_\theta}\coloneqq\Kp(\Pplus{\theta})$.

\begin{lemma}\label{lem:stateExpansion}
For every decorated unit-interval graph $\Gamma$,
\[
 \LLT_\Gamma(\xvec;q+1)=\sum_{\theta\in\ORIENTS(\Gamma)}q^{\asc(\theta)}\,
 \Kp_{\theta}(\xvec).
\]
\end{lemma}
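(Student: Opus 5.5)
We expand $(1+q)^{\asc(\kappa)}$ over subsets of the ascents and then exchange the order of summation. Fix a coloring $\kappa$ of $\Gamma$ and let $A(\kappa)$ be its set of ascents. The binomial theorem gives
\[
 (q+1)^{\asc(\kappa)}=\sum_{T\subseteq A(\kappa)}q^{|T|}.
\]
Substituting this into the definition of $\LLT_\Gamma(\xvec;q+1)$ yields
\[
 \LLT_\Gamma(\xvec;q+1)=\sum_{(\kappa,T)}q^{|T|}x_\kappa,
\]
where the sum runs over pairs consisting of a coloring $\kappa$ and a set $T$ of normal edges along which $\kappa$ strictly increases.

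Next we regroup this sum by $T$. Every subset $T$ of the normal edges is the set of ascending edges of exactly one orientation $\theta$, since we identify $\theta$ with its set of ascending edges. Write $\theta=T$, so that $|T|=\asc(\theta)$. For fixed $\theta$, the admissible $\kappa$ are exactly the maps $[n]\to\setNN$ that increase strictly along every edge of $\theta\cup S$: the strict edges come from the definition of a coloring, and the edges of $\theta$ come from the requirement $T\subseteq A(\kappa)$. No condition is imposed on the descending normal edges, because $\kappa$ may behave arbitrarily there.

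We then identify the inner sum with $\Kp_\theta$. All edges of $\theta\cup S$ point upward, so this directed graph is acyclic and its transitive closure is the partial order $\Pplus{\theta}$. A map that increases strictly along each generating edge increases strictly along every relation $u<_{P}v$, by transitivity along a directed path. Conversely, every generating edge is itself a relation of $P$. This is exactly the remark in the preliminaries that $\Kp(P)$ counts the maps increasing strictly along every generating edge. Hence the inner sum $\sum_\kappa x_\kappa$ equals $\Kp_\theta(\xvec)$, and the lemma follows.

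No step is a genuine obstacle. The only point requiring care is the bijection between pairs $(\kappa,T)$ and pairs $(\theta,\kappa)$ with $\kappa$ a strict $\Pplus{\theta}$-partition. In particular, one must check that the strictness condition on $S$ is built into both sides, and that descending edges impose no constraint, so that nothing is over- or under-counted.
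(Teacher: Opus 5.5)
Your proposal is correct and follows essentially the same route as the paper: expand $(q+1)^{\asc(\kappa)}$ over subsets of the ascents, then identify a pair $(\kappa,T)$ with an orientation $\theta=T$ together with a strict $\Pplus{\theta}$-partition $\kappa$. Your extra checks, that strictness on $S$ is built into both sides and that normal edges outside $T$ impose no condition, are exactly what the paper's one-line bijection leaves implicit.
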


\begin{proof}
For a coloring $\kappa$, expand $(q+1)^{\asc(\kappa)}$ as a sum of $q^{|A|}$
over the subsets $A$ of the ascents of $\kappa$.
A pair $(\kappa,A)$ is the same as an orientation $\theta=A$ together with a
strict $\Pplus{\theta}$-partition $\kappa$, since $\kappa$ increases on the
strict edges and on the edges of $A$.
The claim follows.
\end{proof}

For a set $\eta$ of pairs $uv$ with $u<v$, regarded as edges directed upwards,
the \defin{highest reachable vertex} $\hrv{\eta}{u}$ of a vertex $u$ is the
largest vertex which can be reached from $u$ along edges of $\eta$.
For an orientation $\theta$ we write $\hrv{\theta}{u}\coloneqq\hrv{\theta\cup S}{u}$,
the largest $v$ with $u\leq v$ in $\Pplus{\theta}$.
The fibres of $\hrv{\theta}{\cdot}$ form a set partition of $[n]$, and
$\defin{\lambda(\theta)}$ is the partition given by their sizes.

\begin{example}\label{ex:hrv}
We compute the highest reachable vertices of the orientation $\theta$ of
Example~\ref{ex:running}.
The graph $\theta\cup S$ has the strict edges $14$, $25$ and the ascending edges
$13$, $23$, $24$, $34$, $56$.
Following its edges upwards, the vertices reachable from each vertex, and the
largest among them, are
\begin{center}
\begin{tabular}{@{}lcccccc@{}}
\toprule
$u$ & $1$ & $2$ & $3$ & $4$ & $5$ & $6$ \\
\midrule
reachable from $u$ & $1,3,4$ & $2,3,4,5,6$ & $3,4$ & $4$ & $5,6$ & $6$ \\
$\hrv{\theta}{u}$ & $4$ & $6$ & $4$ & $4$ & $6$ & $6$ \\
\bottomrule
\end{tabular}
\end{center}
Note that $\hrv{\theta}{2}=6$ although the edges $23$ and $24$ lead to vertices
whose highest reachable vertex is $4$: the maximum is taken over all directed
paths.
The fibres are $\{1,3,4\}$ and $\{2,5,6\}$, so $\lambda(\theta)=(3,3)$, and
$\theta$ contributes $q^{5}\elementaryE_{33}$ to Theorem~\ref{thm:AS} below,
see also~\cite[Ex.~2.8]{AlexanderssonSulzgruber2022}.
Below, the diagonal boxes and the vertices are shaded by fibre, and the right
picture is the Hasse diagram of $\Pplus{\theta}$:
\[
\LLTpic{nnnddeneee}{\LLTasc{1/3,2/3,2/4,3/4,5/6}\LLTvertex{2,5,6}{fibre2}}
\qquad\qquad
\begin{tikzpicture}[x=2.6em,y=2.6em]
\node[vtx,fibre1] (1) at (0,0) {1};
\node[vtx,fibre2] (2) at (1.6,0) {2};
\node[vtx,fibre1] (3) at (0.8,1) {3};
\node[vtx,fibre1] (4) at (0.8,2) {4};
\node[vtx,fibre2] (5) at (2.4,1) {5};
\node[vtx,fibre2] (6) at (2.4,2) {6};
\draw[thick] (1)--(3)--(4) (2)--(3) (2)--(5)--(6);
\end{tikzpicture}
\]
The vertex $2$ lies below both $4$ and $6$, and $\hrv{\theta}{2}$ is the larger of
the two.
The fibres need not be chains of $\Pplus{\theta}$, see
Example~\ref{ex:largerClass}, and $\Kp_{\theta}$ is not symmetric.
\end{example}

The following expansion was conjectured
in~\cite{GarsiaHaglundQiuRomero2019,Alexandersson2019llt}.
Its positivity was obtained independently by M.~D'Adderio~\cite{DAdderio2020}.

\begin{theorem}[{\cite[Cor.~2.10]{AlexanderssonSulzgruber2022}}]
\label{thm:AS}
For every decorated unit-interval graph $\Gamma$,
\[
 \LLT_\Gamma(\xvec;q+1)=\sum_{\theta\in\ORIENTS(\Gamma)}q^{\asc(\theta)}\,
 \elementaryE_{\lambda(\theta)}(\xvec).
\]
\end{theorem}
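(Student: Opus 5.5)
By Lemma~\ref{lem:stateExpansion}, it is enough to prove the identity
\[
 \sum_{\theta\in\ORIENTS(\Gamma)}q^{\asc(\theta)}\Kp_\theta=\sum_{\theta\in\ORIENTS(\Gamma)}q^{\asc(\theta)}\elementaryE_{\lambda(\theta)} .
\]
We will prove the finer statement that for each $k$ the sums over orientations with $k$ ascending edges agree. Write $\Kv_k(\Gamma)$ and $\Ev_k(\Gamma)$ for these two sums. The argument is an induction on the number of normal edges of $\Gamma$, and in a second layer on $n$. The base case is a graph with no normal edges. Then $\theta=\emptyset$, and $\Pplus{\emptyset}$ is generated by the strict corners. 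Corners have distinct tails and distinct heads, so the strict edges form a disjoint union of chains. Hence $\Kp_\emptyset=\elementaryE_\lambda$, and since these chains are exactly the fibres of $\hrv{\emptyset}{\cdot}$, the partition $\lambda$ equals $\lambda(\emptyset)$.

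The main tool is a local four-term identity. Fix $u<v<w$ with $uw$ a natural edge, and fix a set $\eta\ni uw$ of upward edges on the remaining pairs. Then
\[
 \Psi(\eta\cup\{uv\})+\Psi(\eta\cup\{vw\})=\Psi(\eta)+\Psi(\eta\cup\{uv,vw\})
\]
should hold both for $\Psi=\Kp$ and for $\Psi=\elementaryE_{\lambda}$.

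For $\Kp$, I would argue by inclusion--exclusion on the colour of $v$. Since $\kappa(u)<\kappa(w)$ is already forced, every colouring satisfies at least one of $\kappa(u)<\kappa(v)$ or $\kappa(v)<\kappa(w)$, or else $\kappa(v)\le\kappa(u)$ and $\kappa(v)\ge\kappa(w)$, which is impossible. So the colourings of $\eta$ are the union of the colourings of $\eta\cup\{uv\}$ and of $\eta\cup\{vw\}$, and their intersection is the set of colourings of $\eta\cup\{uv,vw\}$.

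For $\elementaryE_\lambda$, the required statement is purely combinatorial and I would check it by cases on the hrv fibres. Adding $uv$ can only merge the fibre of $u$ into a higher one, and because $uw$ is present the fibre of $u$ already reaches at least $\hrv{}{w}$. The cases are then compared directly.

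Next I would turn the local identity into a global recursion. Choose the top corner $uw$, with $w$ maximal, and let $v=w-1$ (or a suitable intermediate vertex). Summing the identity over all orientations of the other edges with a fixed number of ascents gives a relation between $\Psi_k(\Gamma)$ and the analogous sums for graphs with $uv$ or $vw$ deleted or made strict, or with the corner $uw$ removed. These are the bounce relations of \cite{AlexanderssonSulzgruber2022}. Since $\Kp$ and $\elementaryE_\lambda$ obey the same linear relations, and each relation expresses the top case through graphs with fewer normal edges, the induction closes.

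The main obstacle is that deleting or converting an edge must again produce a decorated unit-interval graph, with strict edges only at corners. Edges become strict only when they are corners of the modified area sequence, and this must be checked. When no such reduction is available, for instance when a strict corner blocks the move, I would try cutting at an articulation vertex. If $\Gamma$ is the concatenation of $\Gamma_1$ and $\Gamma_2$ sharing a vertex $c$, then I would fix the colour of $c$ and factor both sides. This needs care for $\elementaryE_\lambda$, because the fibre of $c$ glues the top fibre of $\Gamma_1$ to a fibre of $\Gamma_2$.
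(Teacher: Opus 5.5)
Your four-term identity is correct; it is Lemma~\ref{lem:fourTerm}. Your sketch for $\elementaryE_\lambda$ is loose, since adding $uv$ can also move vertices outside the fibre of $u$ that reach $u$, but the identity holds. The gap is the step that turns it into a recursion. For any $u<v<w$ with $uw$ natural we have $b(v)\leq b(w)\leq u$. So $uv$ and $vw$ are forced to be natural edges lying strictly inside $uw$, and neither is a corner. Hence, for every choice of triangle, there is no decorated unit-interval graph ``with $uv$ or $vw$ deleted or made strict''. Summing the identity over $\eta$ only relates sums over the orientations of $\Gamma$ that contain none, one or both of $uv,vw$. These are constrained subfamilies of $\ORIENTS(\Gamma)$, not orientation sets of smaller graphs, so your induction has nothing to apply to. This is the whole difficulty rather than a detail to be checked. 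It is also why the summed identity alone does not give the bounce relations of~\cite{AlexanderssonSulzgruber2022}, which move a strict edge.

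The missing idea is the relabelling in Proposition~\ref{prop:layer}.
\begin{itemize}
\item Work at the first bounce point of Lemma~\ref{lem:firstContact}: $x<y=x+1<t$, with $xt$ strict and no natural edge from a vertex at most $y$ to a vertex beyond $t$.
\item In the NN case, $b(x)=b(y)$, the transposition $\sigma=(x\,y)$ permutes the other natural edges and maps $\eta+xt$ onto $\sigma(\eta)+yt$. Since $x$ is not a sink of $\eta+xt$, Lemma~\ref{lem:sinkRelabel} transports the fibres, and relabelling colourings preserves $\Kp$.
\item So the ``neither'' term becomes an orientation of the target $R$, in which $xt$ is deleted and $yt$ is strict. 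In the ``both'' term $xt$ is redundant, leaving the orientation of $R$ with $xy$ ascending. This gives $\Psi(\Lay_s(\Gamma))=\Psi(\Lay_s(R))+\Psi(\Lay_{s-1}(R))$.
\end{itemize}
The position of the triangle matters. The analogous swap at your top corner does not transport fibres. In the complete graph on $[4]$ with $14$ strict, $(3\,4)$ sends the ``neither'' term $\{14,23,24\}$, with $\lambda=(3,1)$, to $\{13,23,24\}$, with $\lambda=(2,2)$.

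You also need the following.
\begin{itemize}
\item The DN relation $\Psi(\Lay_s(\Gamma))=\Psi(\Lay_s(R))$. It does not reduce the number of natural or normal edges, so a second parameter such as $\mom(\Gamma)$ must enter the induction.
\item The corner relation, to make all corners strict and to unmark the strict corner ending at $y$. This is cleanest if you count $|\theta\cup S|$ rather than ascents.
\end{itemize}
With these repairs your route is a valid direct proof of the summed identity, and shorter than the paper's. The paper obtains Theorem~\ref{thm:AS} by summing the flip-class Theorem~\ref{thm:main}, and so also needs Theorem~\ref{thm:connectivity} and the gluing of Lemma~\ref{lem:gluing}. Your proposed cut at an articulation is not needed for the summed statement.
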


Comparing with Lemma~\ref{lem:stateExpansion}, the function
$\Kp_{\theta}$ and the elementary function $\elementaryE_{\lambda(\theta)}$
agree after summing over all orientations with a given number of ascending
edges, although they differ for individual orientations.
The purpose of this paper is to identify much smaller sets of orientations over
which they agree.

\subsection{Flips}

\begin{definition}\label{def:flip}
Let $u<v<w$ be vertices such that $uw$ is a natural edge, and let $\theta$ be an
orientation in which $uv$ is ascending and $vw$ is not.
Let $\theta'$ be obtained from $\theta$ by reversing both $uv$ and $vw$, so that
$vw$ is ascending in $\theta'$ and $uv$ is not.
We say that $\theta$ and $\theta'$ differ by a \defin{flip} if
\[
 u\lessdot v \text{ and } u\lessdot w \text{ in } \Pplus{\theta},
 \qquad\text{and}\qquad
 v\lessdot w \text{ and } u\lessdot w \text{ in } \Pplus{\theta'}.
\]
The \defin{flip classes} of $\Gamma$ are the equivalence classes of
$\ORIENTS(\Gamma)$ under the relation generated by flips, see
Figure~\ref{fig:flip}.
\end{definition}

\begin{figure}[!ht]
\centering
\EdgePic[2.4em]{3}{u,v,w}{1/3/a,1/2/a}
\qquad$\longleftrightarrow$\qquad
\EdgePic[2.4em]{3}{u,v,w}{1/3/a,2/3/a}
\caption{A flip at $u<v<w$, from $\theta$ (left) to $\theta'$ (right).
The long edge $uw$ is kept, and the ascending short edge moves from $uv$ to
$vw$.
Both drawn edges on each side must be covers, and all other edges of
$\theta\cup S$ are unchanged.}
\label{fig:flip}
\end{figure}

Since $u\lessdot w$ in $\Pplus{\theta}$, the pair $uw$ is an edge of
$\theta\cup S$, the \defin{long edge} of the flip.
The flip retains $uw$, replaces $uv$ by $vw$, and leaves all other edges
unchanged.
The two exchanged \defin{short edges} $uv$ and $vw$ lie strictly inside $uw$;
hence they are not corners, and in particular they are normal.
The definition is symmetric in $\theta$ and $\theta'$, with the roles of the two
short edges exchanged.
Hwang's local flip acts on an acyclic orientation: it reverses the two edges of
a source or sink fork when both are covers in the induced
poset~\cite[Def.~3.4]{Hwang2022x}.
The two outer vertices of his fork are nonadjacent.
Our flip instead retains the long edge $uw$ between the outer vertices and
exchanges the two short edges, with the cover conditions imposed in
$\Pplus{\theta}$ and $\Pplus{\theta'}$.

In an ordinary graph, two of the four cover conditions are automatic.

\begin{lemma}\label{lem:flipCriterion}
Let $\Gamma$ be ordinary, let $u<v<w$ with $uw$ natural, and let $\theta$ be an
orientation in which $uv$ and $uw$ are covers of $\Pplus{\theta}$.
Then $\theta$ and $\theta-uv+vw$ differ by a flip.
Similarly, if $vw$ and $uw$ are covers of $\Pplus{\theta}$, then $\theta$ and
$\theta-vw+uv$ differ by a flip.
\end{lemma}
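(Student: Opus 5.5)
The plan is to verify the four cover conditions of Definition~\ref{def:flip} directly from reachability in the directed graphs $\theta\cup S$ and $\theta'\cup S$, where $\theta'=\theta-uv+vw$. The only facts needed are that all edges point upwards and that $a\lessdot_P b$ for $P=\Pplus{\theta}$ means the only directed path from $a$ to $b$ in $\theta\cup S$ is the single edge $ab$. The two conditions in $\Pplus{\theta}$ are hypotheses, so the work is to show that $vw$ and $uw$ are covers in $\Pplus{\theta'}$.

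First I check that the definition applies. Since $uw$ is natural and $u<v<w$, the pairs $uv$ and $vw$ are natural and lie strictly inside $uw$. So they are not corners, hence normal, and it makes sense to orient them. Next, $vw$ is not ascending in $\theta$: otherwise $u\to v\to w$ would be a path of length two in $\theta\cup S$, contradicting $u\lessdot w$ in $\Pplus{\theta}$. Hence $\theta'$ is an orientation in which $vw$ is ascending and $uv$ is not, as the definition requires.

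\emph{$u\lessdot w$ in $\Pplus{\theta'}$.} Take a directed path from $u$ to $w$ in $\theta'\cup S$ of length at least two. If it avoids $vw$, it is a path in $\theta\cup S$ (it cannot use $uv$, which is absent), contradicting $u\lessdot w$ in $\Pplus{\theta}$. If it uses $vw$, then since vertices increase along the path and $w$ is its end, $vw$ is its last edge. The initial segment from $u$ to $v$ then lies in $\theta\cup S$ minus $uv$, so it has length at least two. This contradicts $u\lessdot v$ in $\Pplus{\theta}$.

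\emph{$v\lessdot w$ in $\Pplus{\theta'}$.} Take a path from $v$ to $w$ in $\theta'\cup S$ of length at least two. It cannot use $vw$ (that edge would have to be both first and last), and it cannot use $uv$, which is absent. So it is a path in $\theta\cup S$, and prefixing the edge $uv\in\theta$ gives a path from $u$ to $w$ of length at least three in $\theta\cup S$. This contradicts $u\lessdot w$ in $\Pplus{\theta}$.

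The second statement is the mirror image of the first, with the roles of $uv$ and $vw$ exchanged. From $vw,uw$ covers we get $uv\notin\theta$, since otherwise $u\to v\to w$ would be a path. Then the same two path arguments apply with the order reversed: a path from $u$ to $w$ in $\theta'\cup S$ through $uv$ must start with $uv$, and a path from $u$ to $v$ can be extended by $vw$.

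There is no genuine obstacle here. The only point needing care is the monotonicity argument that pins down where the new edge can sit on a path (last for $vw$, first for $uv$), which is what makes the remaining segment a path in $\theta\cup S$. In particular, nothing beyond the upward direction of edges and the naturality of the short edges is used, which explains why the lemma is stated for ordinary graphs.
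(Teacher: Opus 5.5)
Your proof is correct and follows the paper's argument. You rule out the same three kinds of bypass in $\theta'\cup S$: one of $vw$, one of $uw$ avoiding $vw$, and one of $uw$ ending with $vw$. You turn these into bypasses of $uw$, $uw$ and $uv$ in $\theta\cup S$, after first checking $vw\notin\theta$, exactly as the paper does.

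One correction to your closing remark. On the lift of a circular graph the cover argument would go through unchanged, since a bypass stays inside a window of length less than $n$. What actually fails there is admissibility: in the complete circular graph on three vertices, $\theta=\{1\to2,1\to3,3\to2\}$ is admissible with $12$ and $13$ covers, but $\theta-12+23$ contains the directed cycle $2\to3\to2$.
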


\begin{proof}
For the first claim, $vw\notin\theta$ since $uw$ is a cover; put
$\theta'=\theta-uv+vw$.
A bypass of $vw$ or of $uw$ in $\theta'\cup S$, drawn dashed, has one of the
forms
\[
 \text{(i)}\;\EdgePic{3}{u,v,w}{2/3/p}\qquad
 \text{(ii)}\;\EdgePic{3}{u,v,w}{1/3/p}\qquad
 \text{(iii)}\;\EdgePic{3}{u,v,w}{1/2/p,2/3/a}\,,
\]
where in (i) and (ii) the path avoids $vw$.
In (i) the path does not use $uv$, and preceded by $uv$ it bypasses $uw$ in
$\theta\cup S$.
In (ii) it already bypasses $uw$ in $\theta\cup S$.
In (iii) the path from $u$ to $v$ avoids $uv$ and $vw$, so it bypasses $uv$ in
$\theta\cup S$.
Hence $v\lessdot w$ and $u\lessdot w$ in $\Pplus{\theta'}$.
The second claim follows in the same way.
\end{proof}

\begin{example}\label{ex:smallestFlip}
Let $\Gamma$ be the complete graph on $[3]$, with Schröder path
$\mathtt{nnneee}$.
The two orientations
\[
\theta=\;
\LLTpic{nnneee}{\LLTasc{1/2,1/3}}
\qquad\text{and}\qquad
\theta'=\;
\LLTpic{nnneee}{\LLTasc{1/3,2/3}}
\]
differ by a flip at $(1,2,3)$:
the covers of $\Pplus{\theta}$ are $1\lessdot 2$ and $1\lessdot 3$, and those of
$\Pplus{\theta'}$ are $1\lessdot3$ and $2\lessdot3$.
No other flip applies to them, so $C=\{\theta,\theta'\}$ is a flip class.
The vertex $2$ is its own highest reachable vertex in $\theta$, while all
vertices reach $3$ in $\theta'$.
Hence
\[
 \Kp_{\theta}+\Kp_{\theta'}
 =\elementaryE_{21}+\elementaryE_3,
\]
which can be checked on the coefficients of $x_1x_2x_3$ and $x_1^2x_2$.
Neither summand on the left is symmetric.
\end{example}

\begin{lemma}\label{lem:flipInvariants}
On a flip class, the number of ascending edges, the connected components of the
underlying undirected graph of $\theta\cup S$, and the set of corners which
are edges of $\theta\cup S$ are constant.
\end{lemma}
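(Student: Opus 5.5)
Since flip classes are generated by single flips, it suffices to check that each of the three quantities is unchanged by one flip. So fix $u<v<w$ and orientations $\theta,\theta'$ that differ by a flip as in Definition~\ref{def:flip}, with $\theta'=\theta-uv+vw$. We treat the three invariants in turn.

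\emph{Number of ascending edges.} The flip removes exactly one ascending edge, $uv$, and adds exactly one, $vw$. All other normal edges keep their direction. Hence $\asc(\theta')=\asc(\theta)$.

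\emph{Connected components.} The undirected graphs underlying $\theta\cup S$ and $\theta'\cup S$ share every edge other than $uv$ and $vw$. Moreover, the long edge $uw$ lies in both, since $u\lessdot w$ in both $\Pplus{\theta}$ and $\Pplus{\theta'}$, and every cover is an edge of the generating set. In $\theta\cup S$, the vertices $u,v,w$ are joined by $uv$ and $uw$; in $\theta'\cup S$, by $vw$ and $uw$. Either way, all three lie in one component. Any path in one graph that uses the exchanged edge can be rerouted through the two remaining triangle edges in the other graph. The two graphs therefore have the same connected components.

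\emph{Corners.} As already observed after Definition~\ref{def:flip}, the short edges $uv$ and $vw$ lie strictly inside the natural edge $uw$. Since $u<v<w$, the edge $uv$ satisfies $u\le u$ and $v<w$, and $vw$ satisfies $u<v$ and $w\le w$. By the definition of corners, neither short edge is a corner. The flip changes $\theta\cup S$ only on these two non-corner edges, so the set of corners belonging to $\theta\cup S$ is unchanged.

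There is no real obstacle here. The only point that needs a little care is that $uw$ is present on both sides, and this is exactly what the cover condition $u\lessdot w$ guarantees. Nothing in the argument depends on whether $\Gamma$ is ordinary or circular.
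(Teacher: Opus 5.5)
Your proof is correct and follows the paper's argument. A flip trades one ascending edge for another. The retained long edge $uw$ keeps $u$, $v$, $w$ in one component, so either short edge can be rerouted through the other two triangle edges. The short edges lie strictly inside the natural edge $uw$, so they are never corners. You just spell out details the paper leaves implicit, such as why $uw$ is present on both sides.
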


\begin{proof}
A flip exchanges one ascending edge for another and retains the long edge $uw$.
Both short edges join $v$ to the component containing $u$ and $w$.
A corner is not contained in a larger natural edge, so it is never a short edge
of a flip.
\end{proof}

\begin{figure}[!ht]
\centering
\begin{tikzpicture}[x=5.4em,y=5.4em]
\node[inner sep=1pt] (s1) at (2, 0) {\LLTpic[cell=0.8em,label font=\tiny]{nnnneeee}{\LLTasc{1/2,1/3,1/4}\LLTvertex{2}{fibre3}\LLTvertex{3}{fibre2}}};
\node[inner sep=1pt] (s2) at (5, -1) {\LLTpic[cell=0.8em,label font=\tiny]{nnnneeee}{\LLTasc{1/2,2/3,1/4}\LLTvertex{2,3}{fibre2}}};
\node[inner sep=1pt] (s3) at (1, 0) {\LLTpic[cell=0.8em,label font=\tiny]{nnnneeee}{\LLTasc{1/3,2/3,1/4}\LLTvertex{2,3}{fibre2}}};
\node[inner sep=1pt] (s4) at (3, 0) {\LLTpic[cell=0.8em,label font=\tiny]{nnnneeee}{\LLTasc{1/3,1/4,2/4}\LLTvertex{3}{fibre2}}};
\node[inner sep=1pt] (s5) at (4, -1) {\LLTpic[cell=0.8em,label font=\tiny]{nnnneeee}{\LLTasc{2/3,1/4,2/4}\LLTvertex{3}{fibre2}}};
\node[inner sep=1pt] (s6) at (2, -1) {\LLTpic[cell=0.8em,label font=\tiny]{nnnneeee}{\LLTasc{1/2,1/4,3/4}\LLTvertex{2}{fibre3}}};
\node[inner sep=1pt] (s7) at (0, 0) {\LLTpic[cell=0.8em,label font=\tiny]{nnnneeee}{\LLTasc{2/3,1/4,3/4}}};
\node[inner sep=1pt] (s8) at (3, -1) {\LLTpic[cell=0.8em,label font=\tiny]{nnnneeee}{\LLTasc{1/4,2/4,3/4}}};
\draw[gray] (s1) -- (s3);
\draw[gray] (s1) -- (s4);
\draw[gray] (s1) -- (s6);
\draw[gray] (s2) -- (s5);
\draw[gray] (s3) -- (s7);
\draw[gray] (s4) -- (s8);
\draw[gray] (s5) -- (s8);
\draw[gray] (s6) -- (s8);
\end{tikzpicture}
\caption{A flip class of the complete graph on $[4]$, with eight orientations,
each with three ascending edges.
Lines are flips, and the diagonal boxes are shaded by the fibres of
$\hrv{\theta}{\cdot}$.}
\label{fig:largerClass}
\end{figure}

\begin{example}\label{ex:largerClass}
Let $\Gamma$ be the complete graph on $[4]$ without strict edges.
The eight orientations in Figure~\ref{fig:largerClass} form a flip class $C$:
every flip from one of them leads to another one, and the lines show that they
are connected.
Reading off the fibres,
\[
 \Ev(C)=\elementaryE_{211}+2\elementaryE_{22}+3\elementaryE_{31}+2\elementaryE_4,
\]
and by Theorem~\ref{thm:main} this is also $\Kv(C)$.
In the orientation with ascending edges $13$, $14$ and $24$, the fibre
$\{1,2,4\}$ is not a chain of $\Pplus{\theta}$, since $1$ and $2$ are
incomparable.
The $64$ orientations of this graph form $39$ flip classes.
\end{example}

Flip classes can be much larger.
The following table counts the flip classes of all unit-interval graphs on $n$
vertices without strict edges; the largest classes occur for the complete
graphs.
\begin{center}
\begin{tabular}{@{}rrrrrr@{}}
\toprule
$n$ & graphs & orientations & flip classes & classes of size $>1$ &
largest class \\
\midrule
$3$ & $5$ & $17$ & $16$ & $1$ & $2$ \\
$4$ & $14$ & $171$ & $132$ & $25$ & $8$ \\
$5$ & $42$ & $3113$ & $1559$ & $522$ & $84$ \\
$6$ & $132$ & $106419$ & $25276$ & $11331$ & $2589$ \\
\bottomrule
\end{tabular}
\end{center}

\subsection{Circular graphs}\label{subs:circularDefinitions}

A \defin{circular area sequence} is a tuple $\avec=(a_1,\dotsc,a_n)$ with
$n\geq1$, $0\leq a_i\leq n-1$ and $a_{i+1}\leq a_i+1$ for all $i$, with indices
read modulo $n$.
We extend $\avec$ periodically to all of $\setZ$ and put $b(i)=i-a_i$, so that
$b$ is weakly increasing and $b(i+n)=b(i)+n$.
The \defin{lifted natural edges} are the pairs $uv$ of integers with
$b(v)\leq u<v$.
They are invariant under translation by $n$, and their classes modulo $n$ are
the \defin{natural edges} of the \defin{circular unit-interval graph}.
A natural edge from the residue of $u$ to that of $v$ has length $v-u<n$; two
residues may be joined by natural edges in both directions, and we regard these
as different edges.
Corners and strict edges are defined on the lift as before, and give a
\defin{circular decorated unit-interval graph} $\Gamma=(\avec,S)$.
An area sequence with $a_1=0$ has no natural edge ending at $1$, and then
$a_v\leq v-1$ for all $v$; thus the ordinary graphs are precisely the circular
graphs in which no lifted natural edge crosses from one period to the next.
Circular unit-interval graphs and their chromatic quasisymmetric functions were
studied in~\cite{Ellzey2016,Ellzey2017,AlexanderssonPanova2018}.

Colorings $\kappa:\setZ/n\setZ\to\setNN$, ascents and $\LLT_\Gamma(\xvec;q)$ are
defined as before, with one ascent for every normal edge whose tail has a
smaller color than its head.
An orientation $\theta$ is again a set of ascending normal edges, and
$\theta\cup S$ is the directed multigraph on $\setZ/n\setZ$ with the strict
and ascending edges.
Now $\theta\cup S$ may contain directed cycles, for example around the whole
circle.
We call $\theta$ \defin{admissible} if $\theta\cup S$ is acyclic, and we
write $\ORIENTS(\Gamma)$ for the set of admissible orientations.
A directed cycle admits no strictly increasing coloring, so
Lemma~\ref{lem:stateExpansion} holds verbatim, with the sum over admissible
orientations.

Let $\theta$ be admissible and let $\widetilde{\theta}$ be the periodic
graph on $\setZ$ consisting of the lifted strict and ascending edges.
Its edges point upwards, so reachability in $\widetilde{\theta}$ is a
partial order on $\setZ$, and we take covers in this order.
A path in $\widetilde{\theta}$ cannot visit two lifts of the same residue,
since its projection would contain a directed cycle.
Hence every integer $z$ has a largest reachable integer
$\widetilde{\hrvop}(\theta,z)$, and
$\widetilde{\hrvop}(\theta,z+n)=\widetilde{\hrvop}(\theta,z)+n$.
The highest reachable vertex $\hrv{\theta}{u}$ is the residue of
$\widetilde{\hrvop}(\theta,u)$, and $\lambda(\theta)$ is the partition of the
fibre sizes of $\hrv{\theta}{\cdot}$ as before.
For ordinary graphs these definitions agree with the previous ones.

\begin{example}\label{ex:threeCycle}
Let $\avec=(1,1,1)$ with no strict edges.
The natural edges form the directed cycle $1\to2\to3\to1$.
The orientation with ascending edges $3\to1$ and $1\to2$ lifts to the path
$3\to4\to5$, so all three vertices have highest reachable vertex $2$, see
Figure~\ref{fig:lift}.
The admissible orientations are the empty one, the three single edges and the
three paths of two edges, and the right-hand side of
Corollary~\ref{cor:circularExpansion} below is
$\elementaryE_{111}+3q\,\elementaryE_{21}+3q^2\elementaryE_3$.
\end{example}

\begin{figure}[!ht]
\centering
\begin{tikzpicture}[x=3em,y=3em]
\draw[gray] (-0.5,0)--(6.5,0);
\foreach \z in {0,...,6}{\node[vtx,fill=white] (z\z) at (\z,0) {$\z$};}
\draw[sel] (z0) to[bend left=40] (z1);
\draw[sel] (z1) to[bend left=40] (z2);
\draw[sel,line width=1.6pt] (z3) to[bend left=40] (z4);
\draw[sel,line width=1.6pt] (z4) to[bend left=40] (z5);
\draw[sel] (z6) to[bend left=40] (6.6,0.25);
\end{tikzpicture}
\caption{The lift to $\setZ$ of the orientation in Example~\ref{ex:threeCycle}
with ascending edges $3\to1$ and $1\to2$; the integer $z$ lies over the residue
of $z$ modulo $3$.
The path $3\to4\to5$ shows that $3$ reaches the lift $5$ of $2$.}
\label{fig:lift}
\end{figure}

Flips are defined on the lift.
Let $u<v<w<u+n$ be integers such that $uw$ is a lifted natural edge, and let
$\theta,\theta'$ be admissible orientations related as in
Definition~\ref{def:flip}, exchanging the residue classes of $uv$ and $vw$.
They differ by a \defin{flip} if the four cover conditions of
Definition~\ref{def:flip} hold in the reachability orders of
$\widetilde{\theta}$ and $\widetilde{\theta'}$.
Lemma~\ref{lem:flipInvariants} holds for circular graphs as well, with the same
proof.

\subsection{Main results}\label{subs:mainResults}

For a set $\Omega$ of orientations of $\Gamma$ we write
\[
 \defin{\Kv(\Omega)}\coloneqq\sum_{\theta\in\Omega}\Kp_{\theta}(\xvec),
 \qquad
 \defin{\Ev(\Omega)}\coloneqq\sum_{\theta\in\Omega}\elementaryE_{\lambda(\theta)}(\xvec).
\]

\begin{theorem}\label{thm:main}
Let $\Gamma$ be an ordinary or circular decorated unit-interval graph.
Then $\Kv(C)=\Ev(C)$ for every flip class $C$ of $\Gamma$.
\end{theorem}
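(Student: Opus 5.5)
The plan follows the route sketched in the introduction: treat the ordinary case first by induction, then reduce the circular case to it.

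\textbf{Four-term identity.} The basic tool is a four-term identity. Fix $u<v<w$ with $uw$ natural and a set of other edges $\eta$ containing $uw$. Write $\eta_1=\eta\cup\{uv\}$, $\eta_2=\eta\cup\{vw\}$, $\eta_0=\eta$ and $\eta_{12}=\eta\cup\{uv,vw\}$. I claim
\[
 \Psi(\eta_1)+\Psi(\eta_2)=\Psi(\eta_0)+\Psi(\eta_{12})
\]
holds for both $\Psi=\Kp$ and $\Psi=\elementaryE_{\lambda}$.
\begin{itemize}
\item For $\Kp$, compare strict colorings. Since $\kappa(u)<\kappa(w)$ is forced, the event ``$\kappa(u)<\kappa(v)$ or $\kappa(v)<\kappa(w)$'' always holds. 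Inclusion--exclusion on the two short-edge conditions then gives the identity.
\item For $\elementaryE_\lambda$, check how $\hrv{\cdot}{\cdot}$ changes. Adding $vw$ makes $v$ and everything below it reach $\hrv{}{w}$. Adding $uv$ lets $u$ reach $\hrv{}{v}$. Because $u$ already reaches $w$, the fibre partitions for the four configurations pair up by a short case analysis, depending on whether $\hrv{\eta}{v}$ exceeds $\hrv{\eta}{w}$.
\end{itemize}

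\textbf{Ordinary case by induction.} Next I induct on the number of normal edges, and for fixed $\Gamma$ on $\asc$. Take a flip class $C$ with $\asc(C)=k$. By Lemma~\ref{lem:flipInvariants}, $C$ has fixed connected components and a fixed set of used corners.
\begin{itemize}
\item If the underlying graph of $\theta\cup S$ is disconnected, or some corner is unused, then $C$ is essentially a product of classes on smaller graphs. Deleting an unused corner keeps $\Kp$ and $\lambda$ unchanged, so induction applies.
\item Otherwise every corner is used. I first handle graphs without articulation vertices, using a connectivity statement: all connected orientations with $k$ ascents in which every corner is a cover form a single flip class $C_0$. For any other class $C'$, some corner is a non-cover, bypassed by a path. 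Apply the four-term identity at that corner, with $uw$ the corner and $v$ on the bypass. This writes $\Kv(C')$ and $\Ev(C')$ with identical coefficients in terms of classes on graphs with one fewer natural edge (deleting a non-cover corner does not change $\Pplus{\theta}$), and also of classes where the relevant structure changes. The delicate point is that these terms group into whole flip classes; I would verify this using Lemma~\ref{lem:flipCriterion}.
\item The remaining class $C_0$ is then determined by subtraction. Summing over all orientations with $k$ ascents, Lemma~\ref{lem:stateExpansion} gives the total of $\Kv$ as the coefficient of $q^k$ in $\LLT_\Gamma(\xvec;q+1)$. For $\Ev$ I cannot assume Theorem~\ref{thm:AS}. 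Instead, summing the four-term identity over all orientations gives the bounce relations for both $\Kv$ and $\Ev$ simultaneously, and these pin down the full sum.
\item For graphs with articulation vertices, cut at the end of the first block. Fix the color of the cut vertex and glue the identities, since both $\Kp$ and the fibre structure factor across the cut vertex once its color, respectively its hrv contribution, is tracked.
\end{itemize}

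\textbf{Circular case.} Here I lift to $\setZ$. Using a signed expansion, I would express each admissible orientation through balanced orientations, meaning those whose edge lengths come from a potential, so that a period cut turns them into ordinary graphs. I then check that the coefficients are constant on flip classes, and apply the ordinary case.

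\textbf{Main obstacle.} I expect the hardest step to be the connectivity theorem: showing that all relevant cover-corner orientations are flip-connected. I would first attack it by reducing to complete graphs through explicit edge exchanges.
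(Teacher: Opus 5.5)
Your overall architecture is the paper's. It has the four-term identity (your case split on $\hrv{\eta}{v}$ versus $\hrv{\eta}{w}$ is the proof of Lemma~\ref{lem:fourTerm}), the isolation of one class per layer by Theorem~\ref{thm:connectivity}, subtraction from the layer sum, cutting after the first block, and a balanced expansion for circular graphs. However, two steps do not work as you describe them.

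The first is the treatment of classes $C'$ in which every orientation has a non-cover corner. There the four-term identity at $(u,v,w)$, with $uw$ the corner, is the wrong tool. It relates orientations of the same graph $\Gamma$ lying in three different layers, so it cannot express $\Psi(C')$ through classes of smaller graphs. Your parenthetical remark is the right idea. But the non-cover corner $f$ depends on $\theta\in C'$, so no single deletion carries $C'$ to one smaller graph. The missing idea is inclusion--exclusion over all nonempty sets $T$ of corners, after marking them strict. For fixed $\theta$, the bypass of $f$ lies strictly inside $f$ and uses no corner. Pairing $T$ with $T\triangle\{f\}$ therefore gives $\sum_T(-1)^{|T|}\Psi((\theta\cup S)\setminus T)=0$. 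Moreover, the cover test of an edge only involves edges inside its interval, and that interval contains no other corner. Hence $C'$ is a union of flip classes of every $\Gamma\setminus T$, and $\Psi(C')=\sum_{T\neq\emptyset}(-1)^{|T|+1}\Psi(C';\Gamma\setminus T)$; this is Lemma~\ref{lem:noCover}. Lemma~\ref{lem:flipCriterion} plays no role here.

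The second is that the layer sum and the gluing are not under control.

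\textbf{Induction measure.} Your induction (number of normal edges, then $\asc$) does not cover the target of a DN contact, which has the same number of normal edges. The paper inducts on $n$, then $|E(\Gamma)|$, then $\mom(\Gamma)$, and $\mom$ drops by $t-y$ under DN.

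\textbf{Existence of a contact.} You also need to produce a contact. The paper finds one at the first descent $t$ of the area sequence, with $x=b(t)$ and $y=x+1$, after unmarking the corner ending at $y$. This is harmless, since that corner is present throughout the class.

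\textbf{Gluing.} More seriously, ``glue the identities'' requires $\Kv(C_1)[\beta]=\Ev(C_1)[\beta]$ for the prefix class. Getting this by subtraction requires color-fixed identities for the other classes of the prefix layer, that is, for classes of smaller graphs. Your uncolored hypothesis $\Kv=\Ev$ does not provide these, so you would have to carry color-fixed statements through the whole induction. The paper never proves a color-fixed identity. Proposition~\ref{prop:isolation} gives an expansion $\Psi(C_1)[\beta]=\sum_iz_i\Psi(C_{1,i})[\beta]$ with the same integers $z_i$ for $\Kv[\beta]$ and $\Ev[\beta]$. Lemma~\ref{lem:gluing} turns this into $\Psi(C)=\sum_iz_i\Psi(C_{1,i}\times C_2)$. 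The uncolored hypothesis is then applied to the glued classes, which are smaller in $(|E|,\mom)$.

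Finally, your proposal only names Theorem~\ref{thm:connectivity} and the circular steps (the coefficient $\chi$, its cycle-deletion involutions, and the constancy of $a_C$ on flip classes). These carry most of the paper's work.
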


The number of ascending edges is constant on $C$ by
Lemma~\ref{lem:flipInvariants}; we denote it by $\asc(C)$.
Combining Theorem~\ref{thm:main} with Lemma~\ref{lem:stateExpansion}, every flip
class contributes
\[
 q^{\asc(C)}\Kv(C)=q^{\asc(C)}\sum_{\theta\in C}\elementaryE_{\lambda(\theta)}(\xvec)
\]
to $\LLT_\Gamma(\xvec;q+1)$, which is elementary-positive.
Summing over all classes we obtain the following.

\begin{corollary}\label{cor:circularExpansion}
For every circular decorated unit-interval graph $\Gamma$,
\[
 \LLT_\Gamma(\xvec;q+1)=\sum_{\theta\in\ORIENTS(\Gamma)}q^{\asc(\theta)}
 \elementaryE_{\lambda(\theta)}(\xvec).
\]
In particular, $\LLT_\Gamma(\xvec;q+1)$ is elementary-positive.
\end{corollary}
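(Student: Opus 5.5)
The corollary follows from Theorem~\ref{thm:main} by summation, once the circular analog of Lemma~\ref{lem:stateExpansion} is in place. I would start with that analog. For a coloring $\kappa$ of the circular graph $\Gamma$, expand $(q+1)^{\asc(\kappa)}$ as $\sum_A q^{|A|}$ over the subsets $A$ of the ascents of $\kappa$. Each pair $(\kappa,A)$ corresponds to an orientation $\theta=A$ together with a map $\kappa:\setZ/n\setZ\to\setNN$ that increases strictly along every edge of $\theta\cup S$.

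If $\theta\cup S$ has a directed cycle, no such $\kappa$ exists, so only admissible $\theta$ contribute. For admissible $\theta$, the maps increasing along $\theta\cup S$ are exactly the strict $P$-partitions of the poset on $\setZ/n\setZ$ generated by $\theta\cup S$; call its generating function $\Kp_\theta$. Hence
\[
 \LLT_\Gamma(\xvec;q+1)=\sum_{\theta\in\ORIENTS(\Gamma)}q^{\asc(\theta)}\Kp_\theta(\xvec),
\]
as already remarked in Subsection~\ref{subs:circularDefinitions}.

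Next I would partition $\ORIENTS(\Gamma)$ into flip classes. Flips are defined only between admissible orientations, so every class lies inside $\ORIENTS(\Gamma)$ and the classes cover it. By the circular version of Lemma~\ref{lem:flipInvariants}, $\asc$ is constant on each class $C$, which lets us write
\[
 \LLT_\Gamma(\xvec;q+1)=\sum_C q^{\asc(C)}\Kv(C).
\]
Theorem~\ref{thm:main} gives $\Kv(C)=\Ev(C)$ for every class. Substituting and regrouping the sum by orientations yields the displayed identity.

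Positivity then follows immediately: each $\elementaryE_{\lambda(\theta)}$ carries the coefficient $q^{\asc(\theta)}$, which lies in $\mathbb{N}[q]$. All the difficulty sits inside Theorem~\ref{thm:main}, in particular its circular case. The only points needing care here are that the poset used for $\Kp_\theta$ in the circular setting agrees with the strict-increase condition on residues, and that non-admissible orientations contribute zero. Both are immediate from the definitions.
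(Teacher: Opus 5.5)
Your proposal is correct and follows the paper's own proof: you use the circular form of Lemma~\ref{lem:stateExpansion} (where non-admissible orientations contribute nothing), group the sum by flip classes using the constancy of $\asc$ from Lemma~\ref{lem:flipInvariants}, and apply Theorem~\ref{thm:main} to each class. All the substantive work is indeed inside Theorem~\ref{thm:main}, exactly as in the paper.
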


For ordinary graphs this is Theorem~\ref{thm:AS}.
Our proof does not use the recursion of~\cite{AlexanderssonSulzgruber2022}, and
thus gives a new proof of Theorem~\ref{thm:AS}.

\section{The four-term identity}\label{sec:local}

This section contains the only local identity used in the proof, together with
two facts about highest reachable vertices.
We state them for sets of edges rather than orientations, since later we
apply them to sets obtained by adding or deleting single edges.

Let $\eta$ be a set of pairs $uv$ with $u<v$ in $[n]$, regarded as edges directed
upwards, or a translation-invariant set of such pairs of integers whose
projection to $\setZ/n\setZ$ is acyclic.
In both cases every vertex $z$ has a highest reachable vertex $\hrv{\eta}{z}$;
in the periodic case, the bound follows as in Section~\ref{subs:circularDefinitions}.
The map $\hrv{\eta}{\cdot}$ is weakly decreasing along every edge of $\eta$, and
$\hrv{\eta}{z}$ is the largest sink, that is, vertex without outgoing edges,
which can be reached from $z$.

\begin{lemma}\label{lem:ineffective}
Let $\eta$ be as above.
\begin{enumerate}[(i)]
\item Let $\xi$ be a set of upward pairs, translation invariant in the periodic
case, such that $\hrv{\eta}{y}\leq\hrv{\eta}{x}$ for every $xy\in\xi$.
Then $\hrv{\eta\cup\xi}{\cdot}=\hrv{\eta}{\cdot}$, and in the periodic case
$\eta\cup\xi$ still has acyclic projection.
\item If $\eta$ contains a path from $x$ to $y$ avoiding the edge $xy$, then
deleting $xy$ does not change reachability.
\end{enumerate}
\end{lemma}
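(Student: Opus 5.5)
For part (i), I will show that adding $\xi$ creates no new large reachable vertices. Let $h=\hrv{\eta}{\cdot}$ and consider any walk in $\eta\cup\xi$ from $z$ to some vertex $t$. I will prove by induction along the walk that $h(t')\le h(z)$ for every vertex $t'$ visited. For an $\eta$-edge $xy$ this holds because $h$ is weakly decreasing along edges of $\eta$: anything reachable from $y$ is reachable from $x$. For a $\xi$-edge $xy$ it holds by the hypothesis $h(y)\le h(x)$. Since $t\le h(t)$, every vertex reachable in $\eta\cup\xi$ from $z$ is at most $h(z)$. Conversely, $\eta\subseteq\eta\cup\xi$, so $\hrv{\eta\cup\xi}{z}\ge h(z)$, and equality follows.

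The periodic case needs more care. There I must first know that paths in the lift of $\eta\cup\xi$ are bounded, so that the maximum exists, and that the projection is acyclic. I will argue as follows. A directed cycle in the projection lifts to a path from some $z$ to $z+kn$ with $k\ge1$, because all lifted edges point upwards. By the same induction along this path, $z+kn\le h(z)$. Translation invariance then gives $h(z+kn)=h(z)+kn$, and also $h(z)\ge z$. Iterating the lifted path $m$ times reaches $z+mkn$ from $z$, and the induction bound still applies to the iterated path, so $z+mkn\le h(z)$ for every $m$. This is a contradiction, since $h(z)$ is finite. Hence the projection is acyclic, $\hrv{\eta\cup\xi}{\cdot}$ is well defined, and the equality follows exactly as before. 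The one subtle point is that the induction argument uses only finite paths, never acyclicity, so it can be applied before acyclicity is known.

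For part (ii), let $p$ be a path from $x$ to $y$ in $\eta$ avoiding the edge $xy$, and let $r\to s$ be any path in $\eta$. I replace each use of the edge $xy$ in this path by $p$. This produces a walk in $\eta\setminus\{xy\}$ with the same endpoints; in the periodic case I replace every translate of $xy$ by the corresponding translate of $p$. So reachability in $\eta\setminus\{xy\}$ contains reachability in $\eta$, and the reverse inclusion is trivial. In the periodic setting, deleting the class of $xy$ means deleting all its translates, and $p$ avoids all of them. If $p$ did use a translate $(x+kn)(y+kn)$ with $k\ne0$, I would need that edge too, so I would state the hypothesis as avoiding the whole class. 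I also note that removing an edge cannot create a cycle.

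The main obstacle is the acyclicity claim in the periodic case of (i), and it is handled by the iteration argument above.
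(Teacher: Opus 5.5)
Your proof is correct and follows the paper's argument. The three ingredients are the same: $\hrv{\eta}{\cdot}$ is weakly decreasing along every edge of $\eta\cup\xi$, a directed cycle in the projection lifts to a path from $z$ to $z+kn$, and (ii) is proved by rerouting. Two small differences: the paper obtains the contradiction in one line from $\hrv{\eta}{z}+kn=\hrv{\eta}{z+kn}\leq\hrv{\eta}{z}$ instead of iterating the path, and your caveat in (ii) is unnecessary, since an upward lifted path from $x$ to $y$ stays in $[x,y]$ and so cannot use a translate $(x+kn)(y+kn)$ with $k\neq0$.
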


\begin{proof}
For (i), $\hrv{\eta}{\cdot}$ is weakly decreasing along every edge of
$\eta\cup\xi$.
Hence every vertex reachable from $z$ in $\eta\cup\xi$ is at most
$\hrv{\eta}{z}$, and the maximizing path in $\eta$ is still available.
In the periodic case, a directed cycle in the projection would lift to a path
from $z$ to $z+kn$ for some $k\geq1$, and then
$\hrv{\eta}{z}+kn=\hrv{\eta}{z+kn}\leq\hrv{\eta}{z}$, which is absurd.
For (ii), every path using $xy$ can be rerouted along the alternative path.
\end{proof}

\begin{lemma}\label{lem:sinkRelabel}
Let $\eta$ and $\eta'$ be sets of upward pairs in $[n]$, and let $\sigma$ be a
permutation of $[n]$ mapping $\eta$ bijectively onto $\eta'$.
If $\sigma$ is increasing on the sinks of $\eta$, then
$\hrv{\eta'}{\sigma(z)}=\sigma(\hrv{\eta}{z})$ for all $z$.
In particular the fibres of $\hrv{\eta'}{\cdot}$ are the images of the fibres of
$\hrv{\eta}{\cdot}$.
\end{lemma}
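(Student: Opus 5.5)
The plan is to treat $\sigma$ as an isomorphism of directed graphs from $\eta$ to $\eta'$, and then use the description of $\hrv{\eta}{z}$ as the largest sink reachable from $z$, recalled just before Lemma~\ref{lem:ineffective}. The point is that $\sigma$ is not assumed to be increasing in general, so it need not preserve which reachable vertex is largest. It does, however, preserve reachability and sinks, and the hypothesis makes it order-preserving on sinks, which is exactly what is needed.

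First, since $\sigma$ maps the edge set $\eta$ bijectively onto $\eta'$, a directed path $z=z_0\to z_1\to\dotsb\to z_k$ in $\eta$ maps to the directed path $\sigma(z_0)\to\dotsb\to\sigma(z_k)$ in $\eta'$. Applying the same argument to $\sigma^{-1}$ gives the converse. Hence $y$ is reachable from $z$ in $\eta$ if and only if $\sigma(y)$ is reachable from $\sigma(z)$ in $\eta'$. Likewise, $x$ has no outgoing edge in $\eta$ if and only if $\sigma(x)$ has none in $\eta'$, so $\sigma$ restricts to a bijection from the sinks of $\eta$ onto the sinks of $\eta'$.

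Second, I would justify that the highest reachable vertex is the largest reachable sink, for any set of upward pairs. If $y$ is reachable from $z$, then following edges upward from $y$ terminates at a sink $s\geq y$ that is also reachable from $z$. So the maximum over reachable vertices is attained at a sink.

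Combining the two steps, the set of sinks reachable from $\sigma(z)$ in $\eta'$ is the image under $\sigma$ of the set of sinks reachable from $z$ in $\eta$. Since $\sigma$ is increasing on the sinks of $\eta$, it maps the maximum of the latter set to the maximum of the former. This gives $\hrv{\eta'}{\sigma(z)}=\sigma(\hrv{\eta}{z})$. For the statement about fibres, note that $\hrv{\eta'}{\sigma(z)}=\sigma(s)$ holds exactly when $\hrv{\eta}{z}=s$, so the fibre of $\hrv{\eta'}{\cdot}$ over $\sigma(s)$ is $\sigma$ of the fibre of $\hrv{\eta}{\cdot}$ over $s$.

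There is no real obstacle here. The only point requiring care is to pass through sinks, rather than arbitrary reachable vertices, since monotonicity of $\sigma$ is available only on sinks.
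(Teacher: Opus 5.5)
Your proof is correct and follows the same route as the paper: $\hrv{\eta}{z}$ is the largest sink reachable from $z$, and $\sigma$ maps the sinks reachable from $z$ in $\eta$ onto the sinks reachable from $\sigma(z)$ in $\eta'$, preserving their order. You simply spell out the steps the paper leaves implicit, namely that $\sigma$ preserves reachability and sinks, that reachable maxima are attained at sinks, and the fibre statement.
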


\begin{proof}
The vertex $\hrv{\eta}{z}$ is the largest sink of $\eta$ reachable from $z$,
and $\sigma$ maps the sinks of $\eta$ reachable from $z$ onto the sinks of
$\eta'$ reachable from $\sigma(z)$, preserving their order.
\end{proof}

We use the Iverson bracket: $[A]=1$ when the statement $A$ holds, and $[A]=0$
otherwise.

\begin{lemma}[The four-term identity]\label{lem:fourTerm}
Let $\eta$ be a set of upward pairs in $[n]$.
Let $u<v<w$ be vertices such that $uw\in\eta$, and $uv,vw\notin\eta$.
Put $\eta_{00}=\eta$, $\eta_{10}=\eta+uv$, $\eta_{01}=\eta+vw$ and
$\eta_{11}=\eta+uv+vw$, and write $\Kp(\eta_{ij})$ for $\Kp$ of the transitive
closure.
\begin{enumerate}[(i)]
\item For every coloring $\kappa:[n]\to\setNN$ which increases along the edges
of $\eta$,
\[
 [\kappa \text{ increases on } \eta_{10}]+[\kappa \text{ increases on } \eta_{01}]
 =1+[\kappa \text{ increases on } \eta_{11}].
\]
In particular $\Kp(\eta_{10})+\Kp(\eta_{01})=\Kp(\eta_{00})+\Kp(\eta_{11})$,
and the same holds for the sums over colorings with prescribed colors at any set
of vertices.
\item The four sets can be paired, either as $\{\eta_{00},\eta_{01}\}$ and
$\{\eta_{10},\eta_{11}\}$ or as $\{\eta_{00},\eta_{10}\}$ and
$\{\eta_{01},\eta_{11}\}$, so that paired sets have the same highest reachable
vertices.
In particular $\Phi(\eta_{10})+\Phi(\eta_{01})=\Phi(\eta_{00})+\Phi(\eta_{11})$
for every function $\Phi$ of the highest reachable vertex map, for example
$\Phi=\elementaryE_{\lambda}$ of its fibre sizes.
\end{enumerate}
\end{lemma}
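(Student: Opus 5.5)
The plan is to prove the two parts separately: (i) by a pointwise case check on the colors of $u,v,w$, and (ii) by splitting into two cases according to how $\hrv{\eta}{v}$ compares with $\hrv{\eta}{u}$, using Lemma~\ref{lem:ineffective}(i) to show that the relevant added edge never changes highest reachable vertices.

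\emph{Part (i).} Fix a coloring $\kappa$ that increases along $\eta$; in particular $\kappa(u)<\kappa(w)$ because $uw\in\eta$. Put $A=[\kappa(u)<\kappa(v)]$ and $B=[\kappa(v)<\kappa(w)]$. Since $\kappa$ already increases on $\eta$, the four brackets in the statement are $A$, $B$, $1$ and $AB$. So the claim $A+B=1+AB$ is equivalent to $(1-A)(1-B)=0$. This holds: if both failed, then $\kappa(w)\leq\kappa(v)\leq\kappa(u)$, contradicting $\kappa(u)<\kappa(w)$. The identity for $\Kp$ follows by summing $x_\kappa$ over all such $\kappa$. Here we use that a strict partition for the transitive closure of $\eta_{ij}$ is the same as a map increasing along the edges of $\eta_{ij}$, and that every strict $\eta_{ij}$-partition increases along $\eta$. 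The same sum can be restricted to colorings with prescribed values on any vertex set, which gives the refined version.

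\emph{Part (ii).} Write $h=\hrv{\eta}{\cdot}$ and recall $h(w)\leq h(u)$, since $uw\in\eta$. Two observations about upward edges will be used repeatedly. First, adding an edge starting at $x$ does not change reachability from any vertex $y>x$; hence adding $vw$ leaves $h(w)$ unchanged, and adding $uv$ leaves $h(v)$ and $h(w)$ unchanged. Second, adding edges never decreases highest reachable vertices.

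\emph{Case $h(v)\leq h(u)$.} I would pair $\{\eta_{00},\eta_{10}\}$ and $\{\eta_{01},\eta_{11}\}$.
\begin{itemize}
\item Lemma~\ref{lem:ineffective}(i) with $\xi=\{uv\}$ shows that adding $uv$ to $\eta$ is ineffective.
\item For the second pair, let $h'=\hrv{\eta_{01}}{\cdot}$. A path from $v$ either avoids $vw$, or uses $vw$ first and then continues from $w$, since it can never return to $v$. Hence $h'(v)=\max(h(v),h(w))\leq h(u)\leq h'(u)$. So Lemma~\ref{lem:ineffective}(i), applied to $\eta_{01}$ and $\xi=\{uv\}$, shows that adding $uv$ to $\eta_{01}$ does not change highest reachable vertices.
\end{itemize}

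\emph{Case $h(v)>h(u)$.} Then $h(w)\leq h(u)<h(v)$, and I would pair $\{\eta_{00},\eta_{01}\}$ and $\{\eta_{10},\eta_{11}\}$.
\begin{itemize}
\item Adding $vw$ to $\eta$ is ineffective by Lemma~\ref{lem:ineffective}(i).
\item In $\eta_{10}$ the values at $v$ and $w$ are still $h(v)$ and $h(w)$, because the added edge $uv$ starts below both. So $vw$ again satisfies the hypothesis of Lemma~\ref{lem:ineffective}(i), and adding it to $\eta_{10}$ is also ineffective.
\end{itemize}

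In either case the paired sets have the same highest reachable vertex map. Hence $\Phi(\eta_{10})+\Phi(\eta_{01})=\Phi(\eta_{00})+\Phi(\eta_{11})$ for any function $\Phi$ of that map.

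The only delicate step is the computation $h'(v)=\max(h(v),h(w))$ in the first case. It relies on paths being strictly upward, so a path from $v$ cannot revisit $v$, and a path starting at $w$ cannot use $vw$.
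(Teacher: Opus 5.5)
Your proof is correct and follows the paper's argument. Part (i) is the same pointwise identity $[a<b]+[b<c]=1+[a<b<c]$, and part (ii) uses the same two pairings, justified by Lemma~\ref{lem:ineffective}(i), by the fact that adding $uv$ leaves $h(v)$ and $h(w)$ unchanged, and by the bound $\hrv{\eta_{01}}{v}\leq\max(h(v),h(w))$. The only difference is the case split: you compare $h(v)$ with $h(u)$, while the paper compares $h(v)$ with $h(w)$. Both splits work, since in the overlap $h(w)\leq h(v)\leq h(u)$ either pairing is valid.
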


The lemma has no cover hypotheses, and the other edges of $\eta$ are arbitrary.
Writing each set for its value, the identity reads
\[
\FTpic{1}{0}\;+\;\FTpic{0}{1}\;=\;\FTpic{0}{0}\;+\;\FTpic{1}{1}\,,
\]
where the dotted line stands for the other vertices and edges, which are the
same in all four graphs.

\begin{proof}
For (i), write $a,b,c$ for the colors at $u,v,w$, so that $a<c$.
Then $[a<b]+[b<c]=1+[a<b<c]$, which is the displayed identity.
Summing $x_\kappa$ over all $\kappa$ increasing on $\eta$, with any colors
prescribed, gives the second statement.

For (ii), write $h(z)=\hrv{\eta}{z}$.
Since $uw\in\eta$, $h(w)\leq h(u)$.
Suppose first that $h(w)\leq h(v)$.
Adding $uv$ does not change the highest reachable vertices of $v$ and $w$, since
no path starting at $v$ or $w$ uses an edge leaving $u<v$.
Hence $\hrv{\eta_{10}}{w}\leq\hrv{\eta_{10}}{v}$ as well, and
Lemma~\ref{lem:ineffective}(i) for $\xi=\{vw\}$ pairs $\eta_{00}$ with $\eta_{01}$
and $\eta_{10}$ with $\eta_{11}$.
Suppose instead that $h(v)<h(w)$.
Then $h(v)<h(w)\leq h(u)$.
In $\eta_{01}$, a path from $v$ either avoids $vw$ or continues from $w$, so
$\hrv{\eta_{01}}{v}\leq\max(h(v),h(w))=h(w)\leq\hrv{\eta_{01}}{u}$.
Lemma~\ref{lem:ineffective}(i) for $\xi=\{uv\}$, applied to $\eta$ and to
$\eta_{01}$, gives the second pairing.
\end{proof}

Lemma~\ref{lem:fourTerm} does not assert that $\eta_{10}$ and $\eta_{01}$ have
the same highest reachable vertex map.
In the first case of the proof, $\eta_{10}$ has the same highest reachable
vertices as $\eta_{11}$, not as $\eta_{01}$.
This is why the identity is used for sums over many orientations at once.

\section{Flip classes of ordinary graphs}\label{sec:ordinary}

In this section we prove Theorem~\ref{thm:main} for ordinary graphs, assuming a
connectivity statement, Theorem~\ref{thm:connectivity}, which we prove in
Section~\ref{sec:connectivity}.

The idea is as follows.
The four-term identity gives linear relations among the sums $\Kv(\Lay_s)$ and
$\Ev(\Lay_s)$ over all orientations with $s$ strict and ascending edges.
These are the bounce relations of~\cite{AlexanderssonSulzgruber2022} in a form
that keeps the number of edges fixed.
To pass from all orientations to one flip class $C$, we show that every other
class with the same number of edges can be written, with the same integer coefficients for
$\Kv$ and $\Ev$, in terms of classes of smaller graphs.
For this we need to know that one class contains all orientations of a certain
kind; this is Theorem~\ref{thm:connectivity}.
The relation is first obtained for an initial piece of the graph, called the
prefix, and then transported to the whole graph by fixing the color of the last
vertex of the prefix.
Throughout, $\Gamma$ is an ordinary decorated unit-interval graph on $[n]$, and
$|E(\Gamma)|=\sum_ia_i$ denotes its number of natural edges.

\subsection{Changing the decoration}

The values $\Kv$ and $\Ev$ only depend on the posets $\Pplus{\theta}$, and the
flip relation only depends on the graphs $\theta\cup S$.
Hence the following two operations transport flip classes to flip classes and
preserve both values.

\begin{lemma}\label{lem:decoration}
Let $C$ be a flip class of $\Gamma$ and let $f$ be a corner.
\begin{enumerate}[(i)]
\item If $f$ is an edge of $\theta\cup S$ for all $\theta\in C$, then marking
$f$ strict, or unmarking it if it is strict, gives a graph $\Gamma'$ in which
$C$ is again a flip class, with the same graphs $\theta\cup S$.
\item If $f$ is an edge of $\theta\cup S$ for no $\theta\in C$, then deleting
$f$ gives a decorated unit-interval graph $\Gamma\setminus f$ with
$|E(\Gamma)|-1$ natural edges, in which $C$ is again a flip class.
\end{enumerate}
\end{lemma}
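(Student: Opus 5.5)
The plan is to build, in each case, a bijection $\theta\mapsto\theta'$ from $\ORIENTS(\Gamma)$ restricted to $C$ onto orientations of the new graph, with $\theta\cup S=\theta'\cup S'$ as directed graphs on $[n]$. Once this holds, $\Pplus{\theta}$, the covers, $\Kp_\theta$ and $\hrv{\theta}{\cdot}$ are all unchanged. Since flips are defined purely by the four cover conditions in $\Pplus{\theta}$ and $\Pplus{\theta'}$ together with the natural edge $uw$, the flips among orientations of $C$ are then the same in both graphs. What remains in each case is to check that the new object is a decorated unit-interval graph, that the map lands in its orientations, and that no flip of the new graph leaves the image of $C$.

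\emph{Case (i).} The area sequence is unchanged, so the natural edges and corners are the same, and $f$ is still a corner; the set $S'=S\triangle\{f\}$ consists of corners, so $\Gamma'$ is a decorated unit-interval graph. If $f$ becomes strict, we send $\theta$ to $\theta-f$, which is legitimate because $f\in\theta$ whenever $f\notin S$. If $f$ becomes normal, we send $\theta$ to $\theta+f$. In both cases $\theta\cup S$ is preserved.

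For closure, take $\theta\in C$ and a flip in $\Gamma'$ from its image. By Lemma~\ref{lem:flipInvariants} the short edges of a flip are not corners, so $f$ is untouched and still lies in the edge set. Reading the flip back in $\Gamma$ gives a flip of $\theta$, because the cover conditions and the naturality of the long edge are the same. Hence the flip lands in the image of $C$, and since the correspondence is symmetric, $C$ is exactly a flip class of $\Gamma'$.

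\emph{Case (ii).} Write $f=uv$ with $u=b(v)$ and $b(v+1)>u$ (or $v=n$). Deleting $f$ means replacing $a_v$ by $a_v-1$, so $b(v)$ becomes $u+1$. The main step is to check that this is still an area sequence and that the other edges are unaffected. We have $a_v-1\le a_{v-1}+1$ trivially. The condition $a_{v+1}\le a_v$ follows from $b(v+1)>b(v)$, i.e.\ $b(v+1)\geq b(v)+1$, which is the corner property. The natural edges of the new graph are those of $\Gamma$ minus $f$, and $b$ stays weakly increasing. The old strict edges other than $f$ remain corners: a corner $u'v'$ with $v'\neq v$ is not contained in any new natural edge, since the natural edges only shrank. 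Here $f\notin S$, because $f$ lies in no $\theta\cup S$. We send $\theta$ to $\theta$ itself, which is valid because $f\notin\theta$. The graphs $\theta\cup S$ are unchanged, so the posets and highest reachable vertices agree.

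The only subtle point is closure. A flip in $\Gamma\setminus f$ uses a long edge that is natural there, hence natural in $\Gamma$, so it is also a flip in $\Gamma$. Conversely, a flip in $\Gamma$ from $\theta\in C$ has its long edge in $\theta\cup S$, so the long edge is not $f$. Its short edges lie strictly inside the long edge and are therefore not corners, so they are not $f$ either. Hence every flip of $\Gamma$ starting in $C$ is a flip of $\Gamma\setminus f$, and $C$ is again a flip class, with $|E(\Gamma)|-1$ natural edges.
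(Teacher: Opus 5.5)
Your proposal is correct and is essentially the paper's argument. The graphs $\theta\cup S$, and hence the covers and the flips, are unchanged; a corner is never a short edge of a flip; and in (ii) $f$ cannot be the long edge of a flip from $C$, while every flip of $\Gamma\setminus f$ is a flip of $\Gamma$. The only difference is that you check the validity of $\Gamma\setminus f$ explicitly through the area sequence ($a_v\mapsto a_v-1$), which the paper summarizes as ``deleting the maximal interval $f$ keeps the natural edges closed under subintervals.''
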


\begin{proof}
A corner is never a short edge of a flip.
In (i) we therefore only change the label of an edge which is present
throughout, and the flips are the same.
In (ii), deleting the maximal interval $f$ keeps the natural edges closed under
subintervals, and the remaining strict edges are still corners.
No orientation in $C$ uses $f$, so $f$ is not the long edge of a flip from $C$,
and conversely every flip of $\Gamma\setminus f$ is a flip of $\Gamma$.
\end{proof}

By Lemma~\ref{lem:flipInvariants}, every corner is either present throughout
a flip class or absent throughout.

\subsection{Fixing the color of a vertex}

We shall need the two values with the color of one vertex $c$ fixed.
For an orientation $\theta$ and $\beta\in\setNN$, let
$\Kp_\theta[\beta]$ be the sum of $x_\kappa$ over the strict
$\Pplus{\theta}$-partitions $\kappa$ with $\kappa(c)=\beta$.
For the elementary value, replace every fibre of $\hrv{\theta}{\cdot}$ by the
chain on its elements in increasing order.
The strict partitions of this disjoint union of chains give
$\elementaryE_{\lambda(\theta)}$, and we let $\elementaryE_\theta[\beta]$ be the sum over
those with $\kappa(c)=\beta$.
If $c$ is the largest vertex of its fibre, which has $p$ elements, then
\begin{equation}\label{eq:markedE}
 \elementaryE_\theta[\beta]
 =x_\beta\,\elementaryE_{p-1}(x_1,\dotsc,x_{\beta-1})
  \prod_{B}\elementaryE_{|B|}(\xvec),
\end{equation}
the product ranging over the other fibres $B$.
For a set $\Omega$ of orientations we write $\Kv(\Omega)[\beta]$ and $\Ev(\Omega)[\beta]$ for the
corresponding sums.
Summing over $\beta$ gives back $\Kv(\Omega)$ and $\Ev(\Omega)$.

Below, $c$ is always the largest vertex of the graph under consideration, so
that~\eqref{eq:markedE} applies, and $\elementaryE_\theta[\beta]$ only depends on
the fibre sizes and on the size of the fibre of $c$.
The values with the color of $c$ fixed are preserved by every operation used
below: by
Lemma~\ref{lem:decoration}; by deleting a redundant edge, which preserves
$\Pplus{\theta}$; by the four-term identity, whose pairing preserves the whole
map $\hrv{\theta}{\cdot}$; and by relabellings as in Lemma~\ref{lem:sinkRelabel}
which fix $c$.

\subsection{Two relations}

For an integer $s$, let $\defin{\Lay_s(\Gamma)}$ be the set of orientations
$\theta$ such that $\theta\cup S$ has $s$ edges.
By Lemma~\ref{lem:flipInvariants}, $\Lay_s(\Gamma)$ is a union of flip
classes.
We use two local configurations, see Figure~\ref{fig:contacts}.

\begin{definition}\label{def:contacts}
Let $x<y<t$ be vertices with $y=x+1$, such that $xt$ is a strict edge, $y$ is not
the endpoint of any strict edge, and $xt$ is the only strict edge leaving $x$.
Assume that no natural edge joins a vertex at most $y$ to a vertex larger than
$t$, that is, $b(t+1)>y$, where we put $b(n+1)=n+1$.
\begin{enumerate}[(i)]
\item The triple $(x,y,t)$ is an \defin{NN contact} if $b(x)=b(y)$ and no strict
edge ends at $x$.
Its \defin{target} $R$ is obtained by deleting the edge $xt$ and marking the
corner $yt$ strict.
\item Let $u<x$. The quadruple $(u,x,y,t)$ is a \defin{DN contact} if $ux$ is a
strict edge, no other strict edge ends at $x$, and $b(y)=u+1$.
Its \defin{target} $R$ is obtained by changing $a_y$ to $a_y+1$ and $a_t$ to
$a_t-1$, so that $uy$ becomes a natural edge and $xt$ does not, and by replacing
the strict edges $ux,xt$ by $uy,yt$.
\end{enumerate}
\end{definition}

One checks directly from the inequalities that $R$ is a decorated
unit-interval graph.
In the DN case, $a_y=x-u=a_x$, so the new area sequence is valid at $y$, and
$a_{t+1}<a_t$ by the condition on $b(t+1)$.
The target of an NN contact has one natural edge fewer.
The target $R$ of a DN contact has the same number of natural edges as $\Gamma$
and satisfies
\[
 \mom(R)=\mom(\Gamma)-(t-y),
 \qquad \mom(\Gamma)\coloneqq\sum_i i\,a_i.
\]

\begin{figure}[!ht]
\centering
\begin{tikzpicture}[llt={cell=0.8em}]
\LLTframe{5}{8}
\draw[llt arbitrary] (0,0)--(1,2) (1,4)--(2,6) (4,7)--(5,8);
\LLTsteps{1}{2}{nn}\LLTsteps{2}{6}{de}
\draw[llt bounce] (3,7)--(3,3)--(1,3);
\end{tikzpicture}
\;$\longmapsto$\;
\begin{tikzpicture}[llt={cell=0.8em}]
\LLTframe{5}{8}
\draw[llt arbitrary] (0,0)--(1,2) (1,4)--(2,6) (4,7)--(5,8);
\LLTsteps{1}{2}{nn}\LLTsteps{2}{6}{ed}
\draw[llt bounce] (3,6)--(3,3)--(1,3);
\end{tikzpicture}
\qquad\qquad
\begin{tikzpicture}[llt={cell=0.8em}]
\LLTframe{5}{8}
\draw[llt arbitrary] (0,0)--(1,2) (2,4)--(2,6) (4,7)--(5,8);
\LLTsteps{1}{2}{dn}\LLTsteps{2}{6}{de}
\draw[llt bounce] (3,7)--(3,3)--(2,3);
\end{tikzpicture}
\;$\longmapsto$\;
\begin{tikzpicture}[llt={cell=0.8em}]
\LLTframe{5}{8}
\draw[llt arbitrary] (0,0)--(1,2) (2,4)--(2,6) (4,7)--(5,8);
\LLTsteps{1}{2}{nd}\LLTsteps{2}{6}{ed}
\draw[llt bounce] (3,6)--(3,3)--(1,3);
\end{tikzpicture}
\caption{An NN contact (left) and a DN contact (right), each followed by its
target, drawn as in~\cite[(3), (4)]{AlexanderssonSulzgruber2022}.
Thick segments are steps of the Schröder path, and dotted segments are
arbitrary.
The diagonal steps at the top cross the strict edge $xt$, and $yt$ in the
targets.
The dashed red path goes down from there to the diagonal box of $x$ and then left to
the path, between the rows of $x$ and $y=x+1$.
In the DN contact the diagonal step at the bottom crosses the strict edge $ux$,
and in its target $uy$.}
\label{fig:contacts}
\end{figure}

\begin{proposition}\label{prop:layer}
Let $\Psi$ be one of $\Kv$, $\Ev$, $\Kv[\beta]$ or $\Ev[\beta]$, with the color
fixed at $c=n$.
If $\Gamma$ has an NN contact with target $R$, then for all $s$
\[
 \Psi(\Lay_s(\Gamma))=\Psi(\Lay_s(R))+\Psi(\Lay_{s-1}(R)).
\]
If $\Gamma$ has a DN contact with target $R$, then $\Psi(\Lay_s(\Gamma))=\Psi(\Lay_s(R))$
for all $s$.
\end{proposition}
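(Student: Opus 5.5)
The plan is to derive both relations from the four-term identity, Lemma~\ref{lem:fourTerm}, applied at a triangle containing the strict edge $xt$. This is then combined with a relabelling that swaps two vertices with almost equal neighbourhoods, controlled by Lemma~\ref{lem:sinkRelabel}. I expect the bookkeeping to need adjusting in places. Every step preserves $\Kp$ of the poset (with prescribed colours) and the full map $\hrv{\cdot}{\cdot}$, up to a relabelling fixing $n$. Hence the four choices of $\Psi$ are handled simultaneously: $x<y<t\le n$, so the transposition fixes $c=n$.

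\emph{NN contact.} First I check from $b(x)=b(y)$, $b(t)=x$ (since $xt$ is a corner) and $b(t+1)>y$ that $x$ and $y$ are twins apart from the edge $xy$. They have the same lower neighbours, $x$ has upper neighbours $y,\dots,t$, and $y$ has upper neighbours $y+1,\dots,t$. In particular $xy$ and $yt$ are normal in $\Gamma$.

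Fix the orientation $B$ of all normal edges other than $xy$ and $yt$. The four completions $\eta_{ij}$ of $B\cup S$ fit Lemma~\ref{lem:fourTerm} with long edge $xt$. Part (i) gives the relation for $\Kp$ with any prescribed colour of $n$, and part (ii) gives it for every function of $\hrv{\cdot}{\cdot}$. I then rewrite $\Psi(\Lay_s(\Gamma))$ by replacing the $\eta_{10}$ terms by $\eta_{00}+\eta_{11}-\eta_{01}$.

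\begin{itemize}
\item In $\eta_{11}$ the edge $xt$ is bypassed by $x\to y\to t$. By Lemma~\ref{lem:ineffective}(ii) it may be deleted without changing $\Pplus{}$. This yields an orientation of $R$ containing $xy$ and the strict edge $yt$, with one edge fewer, so in layer $s-1$ of $R$.
\item For the terms with $xy$ absent, I apply the transposition $\sigma=(x\,y)$. By the twin property it maps $B\cup\{xt\}$ bijectively onto $\sigma B\cup\{yt\}$, which is an orientation of $R$ without $xy$. Lemma~\ref{lem:sinkRelabel} applies because $x$ is never a sink (it has the strict edge $xt$), so $\sigma$ moves no sink other than possibly $y$, and monotonicity on sinks is then immediate.
\end{itemize}

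The remaining task is to check that these two families, after the cancellation, biject onto $\Lay_s(R)\sqcup\Lay_{s-1}(R)$. I would verify this by matching edge counts layer by layer.

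\emph{DN contact.} Here I argue the same way at the triangle $(u,x,y)$. The hypotheses $b(y)=u+1$ and $a_y=a_x$ make $x$ and $y$ twins outside the edges $ux$, $uy$, $xt$, $yt$. The strict edges $ux$ and $xt$ of $\Gamma$ correspond to $uy$ and $yt$ of $R$. I would split the orientations of $\Gamma$ according to $xy$, and apply the four-term identity with long edge $ux$ to rewrite those containing $xy$. The terms with $xy$ absent go to $R$ via $\sigma=(x\,y)$. For the remaining ones, $x\to y$ makes one of the long edges redundant, and the result can be matched with an orientation of $R$ of the same size. Here no layer shift occurs, because no natural edge is lost.

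The main obstacle is the exact matching of terms after using the four-term identity. The identity introduces a negative term, and I must show that it cancels against a $\sigma$-image rather than merely balancing in aggregate. The first thing I would try is to pair $\eta_{01}(B)$ with $\sigma$ applied to an $\eta_{10}(B')$ for a suitable $B'$, exploiting the twin symmetry. If this fails on the nose, I would instead pair the configurations by their highest-reachable maps, as in the two cases of the proof of Lemma~\ref{lem:fourTerm}.
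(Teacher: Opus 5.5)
Your DN argument has a genuine gap. In $\Gamma$ the local edges at the contact are the strict $ux,xt$ and the optional $xy,yt$. In $R$ they are the strict $uy,yt$ and the optional $ux,xy$.

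\emph{Your matching claims fail for individual terms.} First, $\sigma$ sends $\eta+ux+xt+yt$, which has no $xy$, to a set containing $\sigma(yt)=xt$, and $xt$ is not natural in $R$. Second, $\eta+ux+xt+xy$ contains no redundant edge, since $xt$ is bypassed only when $yt$ is also present. Third, there is no four-term identity ``with long edge $ux$'' that rewrites the terms containing $xy$: a triangle with long edge $ux$ has short edges $uv,vx$ with $u<v<x$. Since $uy$ is not natural in $\Gamma$, the triangle $(u,x,y)$ exists only in $R$.

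\emph{No term-by-term matching within a layer can exist.} Take $\avec=(0,1,1,2)$ with $S=\{12,24\}$ and the DN contact $(1,2,3,4)$; its target is $\avec=(0,1,2,1)$ with $S=\{13,34\}$. Layer $3$ consists of $\{23\},\{34\}$ for $\Gamma$ and $\{12\},\{23\}$ for $R$. Their posets have $2,3$ and $3,2$ linear extensions respectively. Among the pairs with equal counts, $\{23\}$ of $\Gamma$ has two maxima while $\{23\}$ of $R$ has a unique maximum, which separates them at $x_1x_2x_3^2$. Likewise $\{34\}$ of $\Gamma$ has two minima while $\{12\}$ of $R$ has a unique minimum, which separates them at $x_1^2x_2x_3$. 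So no single $\Kp$ on one side equals one on the other, although both layer sums equal $\elementaryE_{31}+\elementaryE_4$.

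\emph{The missing step} is to apply Lemma~\ref{lem:fourTerm} in the target as well, at $u<x<y$ with the strict long edge $uy$, in addition to $x<y<t$ with long edge $xt$ in the source. Then for each $\eta$ both sides contribute $(1+q)q^{|\eta|+2}$ times $\Psi(\text{no optional edge})+q\,\Psi(\text{both optional edges})$. The first terms correspond under $\sigma$, since $x$ is not a sink. The second terms both contain the chain $u\to x\to y\to t$, with $xt$ respectively $uy$ redundant, so their posets coincide. The equal factor $1+q$ on both sides is what removes the layer shift.

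Your NN argument, by contrast, is the paper's, and the obstacle you name there does not arise. Substituting $\eta_{10}(B)=\eta_{00}(B)+\eta_{11}(B)-\eta_{01}(B)$ into layer $s$ produces $-\eta_{01}(B)$, which cancels the term $+\eta_{01}(B)$ with the same $B$ already in layer $s$; no $\sigma$-image is involved. What remains is every $\eta_{00}(B)$ and every $\eta_{11}(B)$, each counted in two consecutive layers. After applying $\sigma$ and deleting the redundant $xt$, this is exactly $\Psi(\Lay_s(R))+\Psi(\Lay_{s-1}(R))$. Your proposed pairing with $\sigma(\eta_{10}(B'))$ would not make sense anyway, since $\sigma$ turns $xy$ into a downward edge.
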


\begin{proof}
For an orientation $\theta$ we write $\Psi(\theta)=\Psi(\{\theta\})$.
Comparing coefficients of $q^s$, the two relations are equivalent to
\begin{equation}\label{eq:layerGenerating}
 \sum_{\theta\in\ORIENTS(\Gamma)}q^{|\theta\cup S|}\,\Psi(\theta)
 =(1+q)\sum_{\theta\in\ORIENTS(R)}q^{|\theta\cup S|}\,\Psi(\theta)
\end{equation}
for an NN contact, and to the same identity without the factor $1+q$ for a DN
contact; here $q$ is a formal variable.

Consider an NN contact $(x,y,t)$, and let $\sigma$ be the transposition of $x$
and $y$.
Let $E'$ be the set of natural edges other than $xy$, $yt$ and $xt$.
It is the same for $\Gamma$ and $R$, and $\sigma$ permutes it: for $p<x$, the
pair $px$ is natural if and only if $py$ is, since $b(x)=b(y)$; for $y<p<t$, the
pairs $xp$ and $yp$ are both natural since they lie inside $xt$; and there are no
natural edges from $x$ or $y$ to vertices beyond $t$.
The strict edges in $E'$ avoid $x$ and $y$, so $\sigma$ fixes them.

We group the orientations $\theta$ of $\Gamma$ and of $R$ by the set
$\eta=(\theta\cup S)\cap E'$.
In the pictures below, the vertices $x<y<t$ are drawn in increasing order, the
dotted line stands for $\eta$, thick red arrows are strict edges and thin black
arrows are ascending edges, and each picture stands for the value $\Psi$ of the
corresponding set of edges.
For fixed $\eta$, the orientations of $\Gamma$ contribute $q^{|\eta|}$ times
\[
 q\,\EdgePic{3}{x,y,t}{1/3/s}
 +q^2\,\EdgePic{3}{x,y,t}{1/3/s,1/2/a}
 +q^2\,\EdgePic{3}{x,y,t}{1/3/s,2/3/a}
 +q^3\,\EdgePic{3}{x,y,t}{1/3/s,1/2/a,2/3/a}\,,
\]
and the orientations of $R$ contribute $q^{|\eta|}$ times
\[
 q\,\EdgePic{3}{x,y,t}{2/3/s}
 +q^2\,\EdgePic{3}{x,y,t}{2/3/s,1/2/a}\,.
\]
By the four-term identity, and since $xt$ is redundant once $xy$ and $yt$ are
present,
\[
 \EdgePic{3}{x,y,t}{1/3/s,1/2/a}
 +\EdgePic{3}{x,y,t}{1/3/s,2/3/a}
 =\EdgePic{3}{x,y,t}{1/3/s}
 +\EdgePic{3}{x,y,t}{2/3/s,1/2/a}\,.
\]
Hence the orientations of $\Gamma$ with fixed $\eta$ contribute $(1+q)\,q^{|\eta|}$
times
\[
 q\,\EdgePic{3}{x,y,t}{1/3/s}
 +q^2\,\EdgePic{3}{x,y,t}{2/3/s,1/2/a}\,.
\]
Finally, $\sigma$ maps $\eta+xt$ onto $\sigma(\eta)+yt$.
Its only inversion is the pair $x<y$, and $x$ is not a sink of $\eta+xt$, so
Lemma~\ref{lem:sinkRelabel} applies; $\sigma$ fixes $c=n$, and relabelling
colorings by $\sigma$ preserves $\Kp$.
Hence the first picture for $\eta$ has the same value as the first picture of
$R$ for $\sigma(\eta)$.
Since $\sigma$ permutes the possible sets $\eta$ and preserves their sizes,
summing over $\eta$ gives~\eqref{eq:layerGenerating}.

For a DN contact $(u,x,y,t)$, let $E'$ be the set of natural edges of $\Gamma$ or
$R$ other than $ux$, $uy$, $xy$, $xt$ and $yt$, and again let $\sigma$ exchange
$x$ and $y$.
The previous argument shows that $\sigma$ permutes $E'$, now using that for
$p<x$ with $p\neq u$, the pair $px$ is natural if and only if $py$ is.
With the vertices $u<x<y<t$, the source has the strict edges $ux,xt$ and the
target the strict edges $uy,yt$, and the two remaining local edges of each may be
ascending or not.
We apply the four-term identity to the triangle $x<y<t$ in the source and to the
triangle $u<x<y$ in the target.
It remains to compare the terms with no ascending local edge, and with two:
\begin{gather*}
 \EdgePic{4}{u,x,y,t}{1/2/s,2/4/s}
 \;\longleftrightarrow\;
 \EdgePic{4}{u,x,y,t}{1/3/s,3/4/s}
 \\[0.6ex]
 \EdgePic{4}{u,x,y,t}{1/2/s,2/4/s,2/3/a,3/4/a}
 =\EdgePic{4}{u,x,y,t}{1/3/s,3/4/s,1/2/a,2/3/a}\,.
\end{gather*}
In the first line, $\sigma$ maps $\eta+ux+xt$ onto $\sigma(\eta)+uy+yt$, and $x$ is not
a sink, so the values agree after replacing $\eta$ by $\sigma(\eta)$.
In the second line, both sets contain the path $u\to x\to y\to t$, and their remaining
local edges $xt$ and $uy$ are redundant.
As in the NN case, the source and the target then contribute
$(1+q)\,q^{|\eta|+2}$ times the same two values, after reindexing by $\sigma$,
which proves the second relation.
\end{proof}

Multiplying~\eqref{eq:layerGenerating} by $q^{-|S|}$, and using that $\Gamma$ and $R$
have the same number of strict edges, the
NN relation becomes $F(\Gamma)=(1+q)F(R)$ and the DN relation becomes
$F(\Gamma)=F(R)$, for $F(\Gamma)=\LLT_\Gamma(\xvec;q+1)$ and for the right-hand side of
Theorem~\ref{thm:AS}.
These correspond to the shifted forms of the first two bounce relations
of~\cite[Thm.~2.1]{AlexanderssonSulzgruber2022} for a single bounce point.

\begin{remark}\label{rem:fourRelations}
For comparison, we collect the relations
of~\cite[Thm.~2.1 and Thm.~2.5]{AlexanderssonSulzgruber2022} after the
substitution $q\mapsto q+1$.
There $\Gamma$ is encoded by a Schröder path $P$ in the steps
$\nstep,\dstep,\estep$, and we write $F(P)=\LLT_P(\xvec;q+1)$.
Then $F(\nstep\dstep^k\estep)=\elementaryE_{k+1}$, $F$ is multiplicative under
concatenation, and
\begin{align*}
 F(U\nstep\estep V)&=F(U\estep\nstep V)+qF(U\dstep V)
 &&\text{(corner)},\\
 F(U\nstep.\nstep V\dstep.\estep W)&=(1+q)F(U\nstep.\nstep V\estep.\dstep W)
 &&\text{(NN)},\\
 F(U\dstep.\nstep V\dstep.\estep W)&=F(U\nstep.\dstep V\estep.\dstep W)
 &&\text{(DN)},\\
 F(U\nstep.\dstep V\dstep.\estep W)&=qF(U\nstep.\dstep V\estep.\dstep W)
 +(1+q)F(U\dstep.\nstep V\estep.\dstep W)
 &&\text{(ND)},
\end{align*}
where in the last three relations $V\in\{\nstep,\dstep\}^*$ and the bounce path
at the marked points has a single bounce point.
The corner relation deletes a normal corner, or marks it strict; it follows by
summing Lemma~\ref{lem:decoration} over all flip classes, since every corner is
present throughout a flip class or absent from it.
The NN and DN relations are the summed forms of Proposition~\ref{prop:layer}.
We do not use the ND relation: in Lemma~\ref{lem:firstContact}, a strict corner
ending at $y$ is unmarked instead, by Lemma~\ref{lem:decoration}(i).
\end{remark}

\begin{example}\label{ex:NNcontact}
In the graph $\Gamma$ of Example~\ref{ex:running}, with path
$\mathtt{nnnddeneee}$, area sequence $\avec=(0,1,2,3,3,2)$ and strict edges $14$ and
$25$, the triple $(2,3,5)$ is an NN contact: $25$ is the only strict edge leaving
$2$, no strict edge ends at $2$ or $3$, $b(2)=b(3)=1$, and $b(6)=4>3$.
In the notation of Remark~\ref{rem:fourRelations}, the path is
$U\nstep.\nstep V\dstep.\estep W$ with $U=\mathtt{n}$, $V=\mathtt{d}$ and
$W=\mathtt{neee}$.
Its target $R$ has the path $U\nstep.\nstep V\estep.\dstep W=\mathtt{nnndedneee}$,
with area sequence $(0,1,2,3,2,2)$ and strict edges $14$ and $35$:
\[
\LLTpic{nnnddeneee}{}
\;\;\longmapsto\;\;
\LLTpic{nnndedneee}{}
\]
The graph $\Gamma$ has nine normal edges and $R$ has eight, and
Proposition~\ref{prop:layer}, summed as above, gives
$\LLT_\Gamma(\xvec;q+1)=(1+q)\LLT_R(\xvec;q+1)$.
\end{example}

\subsection{Three smaller classes}

We now express certain flip classes in terms of flip classes of graphs with
fewer natural edges.
In all three cases the coefficients are the same for $\Kv$ and $\Ev$, and for
the values with the color fixed at $c=n$.

\begin{lemma}\label{lem:absentCorner}
If some corner is absent from $\theta\cup S$ for $\theta\in C$, then $C$ is a
flip class of a graph with fewer natural edges, with the same values.
\end{lemma}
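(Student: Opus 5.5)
This is a direct application of Lemma~\ref{lem:decoration}(ii), combined with Lemma~\ref{lem:flipInvariants}.

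First, let $f$ be a corner which is absent from $\theta\cup S$ for some $\theta\in C$. By Lemma~\ref{lem:flipInvariants}, the set of corners lying in $\theta\cup S$ is constant on $C$. Hence $f$ is absent from $\theta\cup S$ for every $\theta\in C$. In particular $f$ is not strict, since strict edges lie in every $\theta\cup S$.

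Next, Lemma~\ref{lem:decoration}(ii) applies. The graph $\Gamma\setminus f$ is a decorated unit-interval graph with $|E(\Gamma)|-1$ natural edges, and $C$, viewed as a set of orientations of $\Gamma\setminus f$, is again a flip class there.

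It remains to check that the values agree. For each $\theta\in C$, the edge set $\theta\cup S$ is literally the same in $\Gamma$ and in $\Gamma\setminus f$, because $f\notin\theta\cup S$. Therefore $\Pplus{\theta}$ is unchanged, and so are $\Kp_\theta$, $\hrv{\theta}{\cdot}$ and $\lambda(\theta)$. Consequently $\Kv(C)$ and $\Ev(C)$ are the same, and so are $\Kv(C)[\beta]$ and $\Ev(C)[\beta]$ with the colour fixed at $c=n$, since the vertex set $[n]$ is unchanged.

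The one point that needs care is the equality of the two flip classes. A flip in $\Gamma$ from an orientation in $C$ never uses $f$. It cannot use $f$ as a short edge, because corners are never short edges. It cannot use $f$ as the long edge, because the long edge must lie in $\theta\cup S$. Conversely, every flip of $\Gamma\setminus f$ is a flip of $\Gamma$, since the cover conditions refer only to the unchanged posets. Both directions are already contained in the proof of Lemma~\ref{lem:decoration}(ii), so I would simply cite that lemma rather than repeat the argument.
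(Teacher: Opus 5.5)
Your proposal is correct and follows the paper, whose proof consists only of citing Lemma~\ref{lem:decoration}(ii). You also spell out what the paper leaves implicit: Lemma~\ref{lem:flipInvariants} shows the corner is absent throughout $C$. Moreover, $\theta\cup S$, and hence $\Pplus{\theta}$ and all the values (including those with the colour fixed at $c=n$), are unchanged when $f$ is deleted.
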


\begin{proof}
This is Lemma~\ref{lem:decoration}(ii).
\end{proof}

\begin{lemma}\label{lem:disconnected}
Suppose that the natural graph of $\Gamma$ is connected, but the underlying
undirected graph of $\theta\cup S$ is disconnected for $\theta\in C$.
Then there is a graph $\Gamma'$ with fewer natural edges on the same vertex
set, and a flip class $C'$ of $\Gamma'$, with the same values, also with the
color fixed at $c=n$.
\end{lemma}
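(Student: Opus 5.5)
Since the components of $\theta\cup S$ are constant on $C$ by Lemma~\ref{lem:flipInvariants}, fix them as $V_1,\dots,V_k$ with $k\geq2$. The idea is that no edge of $\theta\cup S$ joins different components. Such a cross edge can be neither a long edge nor a short edge of a flip: a long edge lies in a component, and both short edges join $v$ to the component of $u$ and $w$. So the normal natural edges between different components are never present in $C$ and play no role.

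First I would use Lemma~\ref{lem:absentCorner}. If some corner is absent from $\theta\cup S$ on $C$, deleting it gives a graph with fewer natural edges on the same vertex set, with the same values. This holds also with the color fixed at $c=n$, since deleting an absent edge does not change $\Pplus{\theta}$. So assume every corner lies in $\theta\cup S$.

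Next I would show that the components are then intervals. Take $p<p'$ in one component and $p<r<p'$. A path from $p$ to $p'$ in the undirected graph has some edge $zz'$ with $z\leq r\leq z'$. That edge lies inside a corner, which is present and hence inside a single component. If $r$ is an endpoint of the corner, it lies in that component. Otherwise the corner's endpoints bound $r$, and the corner is an edge $uw$ of $\theta\cup S$ with $u<r<w$.

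Connectivity then needs an argument. Since every vertex lies in a corner's interval or is an endpoint of a corner, and corners are present, consecutive corners overlap because the natural graph is connected. I expect the main obstacle to be showing that present corners force $r$ into the same component. Presence of $uw$ alone does not connect $r$ to $u$.

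I would therefore reorganize the argument differently. Let $\Gamma'$ be the graph on $[n]$ whose natural edges are those of $\Gamma$ with both ends in a common $V_i$. This set must be closed under subintervals, which is where the interval structure is needed. An alternative is to apply Lemma~\ref{lem:sinkRelabel} and a relabelling permutation $\sigma$ that makes each $V_i$ an interval, placed in increasing order of maxima so that $\sigma$ fixes $n$ and is increasing on sinks. Within each $V_i$, $\sigma$ should preserve order so that the edge sets remain upward and closed under subintervals.

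After the relabelling, $\Gamma'$ is the concatenation of the induced unit-interval graphs. It has fewer natural edges, since the connected natural graph has a cross edge. The orientations in $C$ restrict bijectively, flips correspond because all cover relations are local to a component, $\Kp_\theta$ is preserved under relabelling, and the fibres of $\hrv{}{}$ are preserved by Lemma~\ref{lem:sinkRelabel}. Strict edges are corners inside a component and remain corners. Fixing the color at $n$ is preserved because $\sigma(n)=n$.
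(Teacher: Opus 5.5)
Your final route is essentially the paper's proof. You relabel by a permutation $\sigma$ that preserves order on each component and places the components as consecutive intervals, with the component of $n$ last, and you take $\Gamma'$ to be the disjoint union of the induced unit-interval graphs. The paper orders the other components by least elements rather than by maxima, which makes no difference.

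Two parts of your proposal should be removed or corrected.

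\begin{enumerate}[(i)]
\item Drop the detour through Lemma~\ref{lem:absentCorner} and the attempt to show that components are intervals. The interval claim is false even when all corners are present: in the complete graph on $[3]$ with $\theta\cup S=\{13\}$, the components are $\{1,3\}$ and $\{2\}$. For the same reason, your first version of $\Gamma'$ on $[n]$ without relabelling is not a unit-interval graph.
\item Ordering by maxima does not make $\sigma$ increasing on all sinks. Take $\theta\cup S=\{13,15,24\}$ in the complete graph on $[5]$. The components are $\{2,4\}$ and $\{1,3,5\}$, and the sinks $3<4$ are sent to $\sigma(3)=4>\sigma(4)=2$. So Lemma~\ref{lem:sinkRelabel} does not apply as stated. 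Argue instead, as the paper does, that $\hrv{\theta}{z}$ lies in the component of $z$, on which $\sigma$ is increasing. Hence fibres map to fibres, and $n$ remains the largest element of its fibre.
\end{enumerate}
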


\begin{proof}
The connected components $B_1,\dotsc,B_r$ of $\theta\cup S$ are the same for
all $\theta\in C$.
List them with the component of $n$ last and the others by their least
elements, and let $\rho$ be the permutation of $[n]$ placing them as consecutive
intervals in this order, keeping the order inside each component.
Let $\Gamma'$ be the disjoint union of the graphs induced by $\Gamma$ on the
components, placed on these intervals.
An induced subgraph of a unit-interval graph on a subset, with the inherited
order, is again a unit-interval graph, and its strict edges are still corners.
Since the natural graph of $\Gamma$ is connected, some natural edge joins two
components, and it is lost; hence $|E(\Gamma')|<|E(\Gamma)|$.

Every edge of $\theta\cup S$, every flip, and every directed path between two
vertices of a component lies inside that component.
Hence $\rho$ maps $C$ bijectively onto a flip class $C'$ of $\Gamma'$, and the
cover tests agree.
Relabelling colorings by $\rho$ preserves $\Kp$.
The highest reachable vertex of $z$ lies in the component of $z$, and $\rho$ is
increasing on each component; so $\rho$ maps fibres to fibres, and fixes $n$
together with the position of $n$ in its fibre.
\end{proof}

\begin{lemma}\label{lem:noCover}
Suppose that every corner is strict, and that for every $\theta\in C$ some corner
is not a cover of $\Pplus{\theta}$.
For a set $T$ of corners, let $\Gamma\setminus T$ be the graph with the corners
in $T$ deleted.
Then the orientations in $C$, viewed as orientations of $\Gamma\setminus T$,
form a union of flip classes of $\Gamma\setminus T$, and
\[
 \Psi(C)=\sum_{\emptyset\neq T}(-1)^{|T|+1}\,\Psi(C;\Gamma\setminus T)
\]
for each of the values $\Psi$ above, where $\Psi(C;\Gamma\setminus T)$ is the value
of $C$ as a set of orientations of $\Gamma\setminus T$, and the sum ranges over
nonempty sets of corners.
\end{lemma}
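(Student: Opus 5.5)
The identity is inclusion–exclusion over the corners that fail to be covers, so that the deleted-corner graphs only see orientations in which the deleted corners are redundant. Since every corner is strict, each corner $f=xy$ lies in $\theta\cup S$ for every $\theta$. For a set $T$ of corners, $\theta$ is also an orientation of $\Gamma\setminus T$: the normal edges are unchanged, and $\theta\cup(S\setminus T)=(\theta\cup S)\setminus T$.

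\emph{Step 1: redundant corners and values.} Let $N(\theta)$ be the set of corners that are not covers of $\Pplus{\theta}$; by hypothesis $N(\theta)\neq\emptyset$ for $\theta\in C$. I would first show that the deleted-corner graph reproduces $\Pplus{\theta}$ exactly when $T\subseteq N(\theta)$, and then dispose of the other terms of the sum.

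If $T\subseteq N(\theta)$, deleting the edges of $T$ one at a time keeps the reachability order, by Lemma~\ref{lem:ineffective}(ii). This is valid because each deleted edge is a non-cover: it has a bypass path, and that path can be taken through covers, which are never deleted. Hence the poset is the same, so $\Kp$, the map $\hrvop$, and all values $\Psi$ including the fixed-color ones agree.

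If $T\not\subseteq N(\theta)$, the value of $\theta$ in $\Gamma\setminus T$ differs, and the inclusion–exclusion must account for these terms. I think the intended reading is that $\Psi(C;\Gamma\setminus T)$ sums only over those $\theta\in C$ that remain a union of flip classes. More likely the right statement is this: $C$ splits into flip classes of $\Gamma\setminus T$, and classes where some $f\in T$ is a cover belong to \emph{other} classes. Check: a flip of $\Gamma$ never changes a corner (Lemma~\ref{lem:flipInvariants}), but it may change whether a corner is a cover. So I would set things up so that the sum over $T$ counts, for each $\theta$, only $T\subseteq N(\theta)$. Then $\sum_{\emptyset\ne T\subseteq N(\theta)}(-1)^{|T|+1}=1$, and the claimed identity follows termwise.

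\emph{Step 2: union of flip classes.} I need that $C$, viewed in $\Gamma\setminus T$, is closed under flips of $\Gamma\setminus T$. Take a flip in $\Gamma\setminus T$ with long edge $uw$ and short edges $uv,vw$; these are normal, hence not corners.

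The cover conditions compare $\Pplus{\theta}$ computed with and without $T$. Removing edges can only create covers, not destroy them. Hence if $uv$ and $uw$ are covers in $\Gamma$, they are covers in $\Gamma\setminus T$, which handles one direction.

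The converse is the main obstacle. A flip valid in $\Gamma\setminus T$ might fail in $\Gamma$, because a corner $f\in T$ could bypass $uw$. Here I would use that corners are maximal intervals. Any path through a corner $xy$ from $u$ to $w$ with $u\le x<y\le w$ forces $xy\subseteq uw$, so $xy=uw$. That is impossible, since $uw$ is the long edge, which is present and natural, and a corner inside $uw$ equals $uw$. In that case $uw$ itself is a corner, and it is kept, so it is not in $T$ when it matters. Bypasses of the short edges through corners fail similarly, since a corner cannot lie strictly inside $uv$.

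It follows that flips in $\Gamma$ and in $\Gamma\setminus T$ coincide on $C$, including the $\hrvop$ fibres needed for the fixed-color values. The ordered deletion in Lemma~\ref{lem:decoration}(ii) then confirms that $\Gamma\setminus T$ is a decorated unit-interval graph, which completes the argument.
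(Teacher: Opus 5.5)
Your proposal has a genuine gap in the inclusion--exclusion part. The lemma concerns the full value $\Psi(C;\Gamma\setminus T)$: every $\theta\in C$ is counted in every $\Gamma\setminus T$, including sets $T$ that contain corners which \emph{are} covers of $\Pplus{\theta}$. You correctly notice that for such $T$ the value of $\theta$ changes. But instead of showing that these terms cancel, you propose to reinterpret the statement so that only $T\subseteq N(\theta)$ is counted. That is a different statement, and it is not the one used in Proposition~\ref{prop:isolation}. There one needs $C$ itself, viewed in $\Gamma\setminus T$, to be a union of flip classes. The set $\{\theta\in C: T\subseteq N(\theta)\}$ generally is not, because a flip inside a corner can change whether that corner is a cover. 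The lemma as written is true, and the terms with $T\not\subseteq N(\theta)$ are exactly what still has to be handled. Your justification for $T\subseteq N(\theta)$ also does not extend: it routes bypasses through covers ``which are never deleted'', and that fails once $T$ contains a cover corner.

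The missing observation is one you already use in Step~2. A path between the endpoints of a corner $f$, other than the edge $f$ itself, lies strictly inside the interval of $f$, so by maximality of corners it uses no corner at all. Hence if $f\in N(\theta)$, its bypass survives the deletion of \emph{any} set of corners. By Lemma~\ref{lem:ineffective}(ii), the sets $(\theta\cup S)\setminus T$ and $(\theta\cup S)\setminus(T\cup\{f\})$ then have the same reachability for every $T\not\ni f$. Fix one such $f$ for each $\theta$. The involution $T\mapsto T\mathbin{\triangle}\{f\}$ on all sets of corners reverses signs and preserves the value, so $\sum_T(-1)^{|T|}\Psi\bigl((\theta\cup S)\setminus T\bigr)=0$. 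Separating the term $T=\emptyset$ gives the identity for each $\theta$ separately, and summing over $C$ gives the lemma. Equivalently, the value of $\theta$ in $\Gamma\setminus T$ depends only on $T\setminus N(\theta)$, and the alternating sum over $T\cap N(\theta)$ vanishes because $N(\theta)\neq\emptyset$. Your computation is the special case $T\setminus N(\theta)=\emptyset$.

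Your Step~2 is essentially the paper's argument: deleting corners other than an edge $e$ does not change whether $e$ is a cover. However, saying the flips ``coincide on $C$'' overstates it, since flips of $\Gamma$ whose long edge lies in $T$ disappear. That is harmless, because you only need closure of $C$ under flips of $\Gamma\setminus T$.
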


\begin{proof}
Let $e$ be a natural edge and $f\neq e$ a corner.
A directed path between the endpoints of $e$ stays inside the interval of $e$,
and $f$ is not contained in that interval, since $e$ lies inside some corner and
$f$ is maximal.
So deleting corners other than $e$ does not change whether $e$ is a cover.
The orientations of $\Gamma\setminus T$ are the same sets of ascending edges,
and the flips of $\Gamma\setminus T$ are exactly the flips of $\Gamma$ whose
long edge is not in $T$.
Hence the image of $C$ is closed under flips of $\Gamma\setminus T$, that is,
it is a union of flip classes.

Fix $\theta\in C$ and a corner $f$ which is not a cover of $\Pplus{\theta}$.
The alternative path between the endpoints of $f$ lies strictly inside $f$ and
uses no corner.
It survives the deletion of any set of corners.
Thus, whenever $f\notin T$, Lemma~\ref{lem:ineffective}(ii) shows that
$(\theta\cup S)\setminus T$ and
$(\theta\cup S)\setminus(T\cup\{f\})$ have the same reachability relation.
Pairing these sets gives $\sum_T(-1)^{|T|}\Psi((\theta\cup S)\setminus T)=0$, the
sum ranging over all sets of corners.
The corner $f$ may depend on $\theta$, but the identity holds for each $\theta$.
Summing over $C$ and separating the term $T=\emptyset$ proves the claim.
\end{proof}

\subsection{One flip class}

The following theorem is proved in Section~\ref{sec:connectivity}.
A vertex $v$ is an \defin{articulation} of a connected graph if deleting it
disconnects the graph.

\begin{theorem}\label{thm:connectivity}
Suppose that the natural graph of $\Gamma$ consists of a path $1,2,\dotsc,b$
followed by a connected graph on $[b,n]$ without articulations, where the path
may consist of the single vertex $1=b$.
Let $\theta,\theta'$ be orientations with the same number of ascending edges
such that the underlying graphs of $\theta\cup S$ and $\theta'\cup S$ are
connected and every corner is a cover of both $\Pplus{\theta}$ and
$\Pplus{\theta'}$.
Then $\theta$ and $\theta'$ lie in the same flip class.
\end{theorem}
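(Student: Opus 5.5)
The plan is to connect every orientation in the set to a canonical normal form by flips. Let $\mathcal{G}_k$ be the set of orientations $\theta$ with $\asc(\theta)=k$ such that the underlying graph of $\theta\cup S$ is connected and every corner is a cover of $\Pplus{\theta}$. The first step is to check that $\mathcal{G}_k$ is preserved by flips. Connectivity and the set of corners present are flip invariants by Lemma~\ref{lem:flipInvariants}. That corners stay covers needs an argument, because a flip changes short edges inside some corners. I would prove this directly: a bypass of a corner $f$ in $\theta'\cup S$ that uses the new edge $vw$ can be rerouted through $uw$ (or through $uv$ followed by a path), which would give a bypass in $\theta\cup S$. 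This mirrors the case analysis in Lemma~\ref{lem:flipCriterion}. Throughout, Lemma~\ref{lem:flipCriterion} is the main tool for producing flips: a flip is available whenever $uv$ and $uw$, or $vw$ and $uw$, are covers of $\Pplus{\theta}$.

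Next I would define a normal form and a potential. The natural choice is the orientation in $\mathcal{G}_k$ whose ascending edges are lexicographically as ``short and late'' as possible, for instance maximizing $\sum_{uv\in\theta}(u+v)$ or minimizing total edge length. I would show that any $\theta\in\mathcal{G}_k$ that is not extremal admits a flip increasing the potential. Take an ascending edge $uv$ that is not yet in final position and a long edge $uw$ with $v<w$. If $uv$ and $uw$ are both covers, Lemma~\ref{lem:flipCriterion} moves $uv$ to $vw$. If they are not both covers, a bypass exists. Using the unit-interval property (subintervals of natural edges are natural), I would shorten the configuration to a smaller triangle $u'<v'<w'$ inside $uw$ where both required edges are covers. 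This is an induction on the length of $uw$ and the number of vertices between the endpoints, using transitivity of $\Pplus{\theta}$ and choosing $v$ adjacent in the cover relation.

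The structural hypothesis enters in handling the two parts of the graph. On the initial path $1,\dots,b$ the natural edges are just $i(i+1)$. These edges are never short edges of a flip, since no triangle contains them, and they are corners. Connectivity then forces them all to be present, so they are fixed throughout. On the $2$-connected part $[b,n]$ I would reduce to two situations. The first is explicit exchanges along a chain of overlapping corners: absence of articulations means consecutive corners overlap in at least two vertices, so triangles exist to carry an ascending edge from one corner's interval into the next. The second is the complete graphs on the vertex intervals of the corners, where I would verify connectivity of $\mathcal{G}_k$ by induction on the number of vertices. There, deleting the top vertex $n$ and using connectivity for $K_{n-1}$ together with flips at triples $(u,v,n)$ should suffice. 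The transport step glues these local connectivities. Given $\theta,\theta'$, I first move edges so that each corner interval contains the same number of ascending edges as in the normal form, using overlap triangles. Then I connect within each complete block while keeping the block's corner a cover.

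I expect the main obstacle to be the bypass situations: a desired flip is blocked because $uv$ or $uw$ is not a cover. Rerouting must then preserve connectivity of $\theta\cup S$ and the cover condition on all corners simultaneously. I would attack this first on complete graphs, where every pair is an edge and the poset structure is most symmetric, classifying $\Pplus{\theta}$ by its Hasse diagram. Once that case is settled, the exchange moves between overlapping corners are the remaining step; I would treat them via Lemma~\ref{lem:flipCriterion} applied at the triangle formed by the shared vertices.
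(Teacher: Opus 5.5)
There is a genuine gap, and it is exactly where the work lies. Your plan rests on a monotone normal form: you claim every non-extremal orientation in $\mathcal{G}_k$ admits a flip that increases a potential such as $\sum_{uv\in\theta}(u+v)$. This already fails for the complete graph on $[4]$ with three ascending edges, where all eight orientations of $\mathcal{G}_3$ form one class (Figure~\ref{fig:largerClass}). In $\{14,23,34\}$ no vertex has two outgoing edges. So no flip $uv\mapsto vw$ is possible, and your shortening argument has no long edge $uw$ to start from. The only flip from this orientation is $34\mapsto13\ [14]$. Every flip $uv\mapsto vw$ raises $\sum(u+v)$ by $w-u$, so this orientation, with value $17$, is a local maximum, while $\{14,24,34\}$ has value $18$. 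Moreover, every flip path between these two passes through $\{12,13,14\}$, the minimum of the class. Minimizing total length fails as well: $\{12,14,23\}$, $\{12,14,34\}$ and $\{14,23,34\}$ are three separate local minima. So connectivity cannot come from a greedy monotone argument. You need reversible macro-moves that push edges backwards and later restore them.

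Your first step is also false. Take $\avec=(0,1,2,2)$, with corners $13,24$, and $\theta=\{12,13,24,34\}$. The flip $12\mapsto23\ [13]$ is legal, but afterwards $2\to3\to4$ bypasses the corner $24$. Your rerouting fails because the tail $u=1$ of the long edge lies outside $[2,4]$. So flips need not stay in $\mathcal{G}_k$, and every flip in a constructed path must be checked.

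The block-by-block part is too vague to repair this. Corner intervals overlap, so prescribing "the number of ascending edges in each corner interval" is not meaningful. The complete-graph result also needs the restriction to a block to be connected, with its corner a cover and paths confined to the block; none of this is controlled. In the complete graph on $0,\dots,n$, deleting $n$ destroys the only corner $0n$, and $0(n-1)$ need not even be an edge, so the induction does not start.

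The paper supplies the missing mechanism as follows:
\begin{enumerate}[(i)]
\item It first flips every orientation into $\Gplus$, the corners together with the normal edges $uv$ with $r(u)<r(v)$.
\item Its crossing-slide lemma exchanges $ab$ for $cd$ along crossing covers $ac$, $bd$. It clears the edges inside out of the way, performs two flips, and undoes the clearing.
\item The complete case uses the four-flip child merge, the chain-opening flips $j(j+1)\mapsto 0j\ [0(j+1)]$ to work inside a subinterval, and explicit flip sequences for $n=4$.
\item The general case is reduced by removing part of the last clique or the first vertex. Along the way the restriction is kept connected, and the admissible numbers of edges at vertex $1$ are controlled.
\end{enumerate}
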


We shall express flip classes in the form
\begin{equation}\label{eq:reduction}
 \Psi(C)=\sum_iz_i\Psi(C_i)\qquad\text{for }\Psi\in\{\Kv,\Ev,\Kv[\beta],\Ev[\beta]\},
\end{equation}
with the color fixed at $c=n$, where the integers $z_i$ do not depend on $\Psi$,
and every $C_i$ is a flip class of a graph $\Gamma_i$ on $[n]$ such that
$(|E(\Gamma_i)|,\mom(\Gamma_i))$ is lexicographically smaller than
$(|E(\Gamma)|,\mom(\Gamma))$.

\begin{proposition}\label{prop:isolation}
Suppose that $\Gamma$ satisfies the hypothesis of Theorem~\ref{thm:connectivity}
and has an NN or a DN contact.
Let $C$ be a flip class containing an orientation $\theta$ such that
$\theta\cup S$ is connected and every corner is a cover of $\Pplus{\theta}$.
Then $C$ has an expansion~\eqref{eq:reduction}.
\end{proposition}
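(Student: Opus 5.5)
Fix $s=|\theta\cup S|$, which is constant on $C$ by Lemma~\ref{lem:flipInvariants}. By Theorem~\ref{thm:connectivity}, $C$ is the unique flip class in $\Lay_s(\Gamma)$ whose orientations $\theta'$ have $\theta'\cup S$ connected and every corner a cover of $\Pplus{\theta'}$. Indeed, $\theta$ has this property, and the number of ascending edges equals $s-|S|$. So I will write
\[
\Psi(C)=\Psi(\Lay_s(\Gamma))-\sum_{C'\subseteq\Lay_s(\Gamma),\,C'\neq C}\Psi(C'),
\]
and expand each term on the right with coefficients independent of $\Psi$, in terms of flip classes of graphs that are lexicographically smaller in $(|E|,\mom)$.

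The first term is handled by Proposition~\ref{prop:layer}. For an NN contact, $\Psi(\Lay_s(\Gamma))=\Psi(\Lay_s(R))+\Psi(\Lay_{s-1}(R))$, and $R$ has one natural edge fewer. For a DN contact, $\Psi(\Lay_s(\Gamma))=\Psi(\Lay_s(R))$, where $|E(R)|=|E(\Gamma)|$ and $\mom(R)=\mom(\Gamma)-(t-y)<\mom(\Gamma)$. In both cases each layer $\Lay_{s'}(R)$ is a disjoint union of flip classes of $R$. The relation therefore becomes a sum with coefficient $1$ over classes of a smaller graph, valid for all four $\Psi$ with the color fixed at $c=n$, since $x,y,t\le n$ and $\sigma$ fixes $n$. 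One point to check is that $\sigma$ fixes $n$ even when $t=n$; this holds because $\sigma$ only swaps $x$ and $y<t$.

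For each remaining class $C'\neq C$ in $\Lay_s(\Gamma)$, the defining property fails for its orientations, and I split into three cases.
\begin{enumerate}[(a)]
\item Some corner is absent from $\theta'\cup S$. By Lemma~\ref{lem:flipInvariants} this is constant on $C'$, and Lemma~\ref{lem:absentCorner} gives a graph with fewer natural edges.
\item All corners are present and $\theta'\cup S$ is disconnected. The natural graph of $\Gamma$ is connected by hypothesis (a path glued to a connected block), so Lemma~\ref{lem:disconnected} applies.
\item All corners are present and the graph is connected, but for every $\theta'\in C'$ some corner is not a cover. Here I first use Lemma~\ref{lem:decoration}(i) to mark every corner strict, which is allowed because all corners are present throughout $C'$ and the values are preserved. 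Then Lemma~\ref{lem:noCover} writes $\Psi(C')$ as an alternating sum over $\Gamma\setminus T$ with $T\neq\emptyset$. Each $\Gamma\setminus T$ has fewer natural edges, and the image of $C'$ is a union of flip classes there.
\end{enumerate}
The case that cannot occur is the one where all corners are present, the graph is connected, and some $\theta'\in C'$ has every corner a cover, since then $C'=C$ by Theorem~\ref{thm:connectivity}.

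All coefficients obtained this way are integers independent of $\Psi$, and each class lives on a graph on $[n]$ that is lexicographically smaller, which gives~\eqref{eq:reduction}. I expect the main obstacle to be case (c), and I will check two things there. First, Lemma~\ref{lem:noCover} requires every corner to be strict, which is why the remarking by Lemma~\ref{lem:decoration}(i) must come first. Second, the noncover corner may vary with $\theta'$, but Lemma~\ref{lem:noCover} allows exactly this. A smaller point is that in case (b) the relabelling $\rho$ must fix $n$, which Lemma~\ref{lem:disconnected} already ensures.
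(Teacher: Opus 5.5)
Your proposal is correct and is essentially the paper's proof. You write $\Psi(C)=\Psi(\Lay_s(\Gamma))-\sum_{C'\neq C}\Psi(C')$, expand the first term by Proposition~\ref{prop:layer}, and dispose of every other class in $\Lay_s(\Gamma)$ by Lemma~\ref{lem:absentCorner}, by Lemma~\ref{lem:disconnected}, or by Lemma~\ref{lem:decoration}(i) followed by Lemma~\ref{lem:noCover}, with Theorem~\ref{thm:connectivity} excluding any other class that contains a connected orientation with all corners covers. Your extra checks (that $\sigma$ fixes $n$, and that $\mom$ drops by $t-y>0$ in the DN case) are right. One small wording fix: in your opening sentence, $C$ should be the unique class \emph{containing} such an orientation, as you correctly use later, not a class all of whose orientations have the property.
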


\begin{proof}
Let $s$ be the number of edges of $\theta\cup S$.
Every other flip class $C'\subseteq\Lay_s(\Gamma)$ omits a corner, or has a
disconnected graph, or contains no orientation in which all corners are covers;
otherwise it would contain an orientation as in Theorem~\ref{thm:connectivity},
and hence equal $C$.
In the first two cases Lemmas~\ref{lem:absentCorner} and~\ref{lem:disconnected}
give an expansion~\eqref{eq:reduction} of $C'$.
In the third case, all corners are present in $C'$, and we mark them strict by
Lemma~\ref{lem:decoration}(i) and apply Lemma~\ref{lem:noCover}.
Proposition~\ref{prop:layer} expresses $\Psi(\Lay_s(\Gamma))$ in terms of one or two
of the sets $\Lay_r(R)$ of the target $R$, which are unions of flip classes of
$R$.
Hence
\[
 \Psi(C)=\Psi(\Lay_s(\Gamma))-\sum_{C'\neq C}\Psi(C')
\]
is an expansion~\eqref{eq:reduction}.
\end{proof}

\subsection{Gluing at one vertex}

Suppose that $c\in[n]$ is such that no natural edge joins a vertex smaller
than $c$ to a vertex larger than $c$.
Let $\Gamma_{\leq c}$ and $\Gamma_{\geq c}$ be the graphs induced on $[1,c]$ and
on $[c,n]$; we say that $\Gamma$ is obtained by \defin{gluing} them at $c$.
An orientation of $\Gamma$ is a pair of orientations of the two parts.
A directed path between two vertices of one part stays in that part, and a
triangle $u<v<w$ with $uw$ natural lies in one part.
Hence the flip classes of $\Gamma$ are the products $C_1\times C_2$ of flip
classes of the two parts.

\begin{figure}[!ht]
\centering
\begin{tikzpicture}[x=2.6em,y=2.6em]
\draw[llt arbitrary,line width=0.6pt] (-0.5,0)--(7.5,0);
\foreach \i in {0,3,5,7} \node[vtx,fill=white] (v\i) at (\i,0) {};
\node[vtx,fibre2] (v1) at (1,0) {};
\node[vtx,fibre2] (v2) at (2,0) {};
\node[vtx,fibre23] (v4) at (4,0) {$c$};
\node[vtx,fibre3] (v6) at (6,0) {};
\draw[sel] (v1) to[bend left=40] (v2);
\draw[sel] (v2) to[bend left=40] (v4);
\draw[sel] (v4) to[bend left=40] (v6);
\draw[decorate,decoration={brace,amplitude=4pt,mirror}] (-0.3,-0.45)--(4.3,-0.45)
  node[midway,below=4pt] {$[1,c]$};
\draw[decorate,decoration={brace,amplitude=4pt}] (3.7,0.5)--(7.3,0.5)
  node[midway,above=4pt] {$[c,n]$};
\end{tikzpicture}
\caption{Gluing at $c$.
A left fibre whose highest reachable vertex is $c$ (hatched) merges with the right
fibre containing $c$ (dotted); $c$ lies in both.}
\label{fig:gluing}
\end{figure}

\begin{lemma}\label{lem:gluing}
Let $C_2$ be a flip class of $\Gamma_{\geq c}$, and suppose that, with the
color fixed at $c$,
\[
 \Psi(C_1)[\beta]=\sum_iz_i\Psi(C_{1,i})[\beta]
 \qquad\text{for all $\beta$ and $\Psi\in\{\Kv,\Ev\}$},
\]
where $C_{1,i}$ is a flip
class of a graph $\Gamma_{1,i}$ on $[1,c]$.
Then $\Psi(C_1\times C_2)=\sum_iz_i\Psi(C_{1,i}\times C_2)$ for $\Psi\in\{\Kv,\Ev\}$, where
$C_{1,i}\times C_2$ is a flip class of the graph obtained by gluing
$\Gamma_{1,i}$ and $\Gamma_{\geq c}$ at $c$.
\end{lemma}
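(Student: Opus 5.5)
Since the flip classes of the glued graph are products, it suffices to write $\Psi(C_1\times C_2)$, for each orientation pair $(\theta_1,\theta_2)\in C_1\times C_2$, as a sum over the color $\beta$ of $c$ of a quantity depending on $\theta_1$ only through its marked value at $\beta$. Then the hypothesis can be applied term by term.

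For $\Kp$ this is direct. A directed path between two vertices of one part stays in that part, so $\Pplus{\theta_1\cup\theta_2}$ is the union of $\Pplus{\theta_1}$ on $[1,c]$ and $\Pplus{\theta_2}$ on $[c,n]$, glued at $c$. A map $\kappa$ on $[n]$ is therefore a strict partition exactly when its restrictions to the two parts are, and these restrictions agree at $c$. Hence
\[
 \Kp_{(\theta_1,\theta_2)}=\sum_\beta x_\beta^{-1}\,\Kp_{\theta_1}[\beta]\,\Kp_{\theta_2}[\beta],
\]
where $\Kp_{\theta_2}[\beta]$ is taken with the color fixed at $c$, the smallest vertex of $\Gamma_{\geq c}$. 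The factor $x_\beta^{-1}$ only removes the double count of $x_{\kappa(c)}$, so the identity is one of formal power series. Summing over $C_1\times C_2$ and substituting the hypothesis for $\Kv(C_1)[\beta]$ gives the $\Kv$ claim. The identity is linear in the $\theta_1$ factor, and each $C_{1,i}\times C_2$ is a flip class of the glued graph by the product description.

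For $\Ev$ I use the same factorization with the chain models. Let $h$ be the highest reachable vertex map of the glued orientation. For $z\geq c$, $h(z)=\hrv{\theta_2}{z}$, because paths from $z$ stay in $[c,n]$. For $z\leq c$, the largest reachable vertex is $\hrv{\theta_2}{c}$ if $z$ reaches $c$, and $\hrv{\theta_1}{z}$ otherwise. Since $c$ is the largest vertex of $[1,c]$, $z$ reaches $c$ exactly when $\hrv{\theta_1}{z}=c$.

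So the fibres of $h$ are as follows. The fibres of $\theta_1$ not containing $c$ are unchanged, and so are the fibres of $\theta_2$ not containing $c$. The fibre $A\ni c$ of $\theta_1$, where $c$ is its largest element, merges with the fibre $B\ni c$ of $\theta_2$, where $c$ is its smallest element, into $A\cup B$, as in Figure~\ref{fig:gluing}. Consequently $\elementaryE_{|A\cup B|}$ counts strict colorings of the chain $A\cup B$, which is the chain $A$ followed by the chain $B$ and sharing the vertex $c$. Fixing the color $\beta$ of $c$ gives
\[
 \elementaryE_{(\theta_1,\theta_2)}=\sum_\beta x_\beta^{-1}\,\elementaryE_{\theta_1}[\beta]\,\elementaryE_{\theta_2}[\beta],
\]
where $\elementaryE_{\theta_2}[\beta]$ uses the chain model of $\theta_2$ with $c$ fixed. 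The same is true for the glued graphs $\Gamma_{1,i}$, since the computation used only that $c$ is the largest vertex of the left part. Summing over $C_1\times C_2$ and applying the hypothesis with $\Psi=\Ev$ finishes the proof.

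The main obstacle is the fibre-merging step: one must check that the highest reachable vertex of a left vertex reaching $c$ is really $\hrv{\theta_2}{c}$. This holds because every vertex of $[c,n]$ exceeds every vertex of $[1,c)$, so continuing through $c$ always beats staying on the left. One must also check that the marked value $\elementaryE_{\theta_1}[\beta]$ from~\eqref{eq:markedE} is exactly the left factor of the chain count, which uses that $c$ is the top element of its left fibre.
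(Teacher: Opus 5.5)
Your proposal is correct and follows essentially the same route as the paper. You factor both $\Kp$ and the chain model of $\elementaryE_{\lambda}$ through the color $\beta$ of $c$; your $x_\beta^{-1}\Kp_{\theta_2}[\beta]$ is the paper's $\widehat{\Kp}_{\theta_2}[\beta]$. You then merge the two fibres containing $c$ into one chain, multiply the hypothesis by the right-hand factor and sum over $\beta$, exactly as the paper does.

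The paper adds one check that you use implicitly when invoking the product description of flip classes. It verifies in one line that each glued graph is a decorated unit-interval graph: left corners end at most at $c$ and right corners start at least at $c$, so strict edges remain corners.
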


\begin{proof}
The glued graphs are decorated unit-interval graphs: a corner of the left part
ends at most at $c$ and a corner of the right part starts at least at $c$, so
neither contains the other.
For orientations $\theta_1$ and $\theta_2$ of the two parts, a strict
$\Pplus{\theta_1\cup\theta_2}$-partition is a pair of strict partitions of the
parts which agree at $c$.
Hence
\[
 \Kp_{\theta_1\cup\theta_2}=\sum_\beta\Kp_{\theta_1}[\beta]\,
 \widehat{\Kp}_{\theta_2}[\beta],
\]
where $\widehat{\Kp}_{\theta_2}[\beta]$ is the sum over strict partitions of
$\Pplus{\theta_2}$ with value $\beta$ at $c$, with the factor $x_\beta$ omitted.

A left vertex reaches the right part only through $c$.
Hence the highest reachable vertex of a left vertex $u$ is
$\hrv{\theta_1}{u}$ if this is smaller than $c$, and $\hrv{\theta_2}{c}$
otherwise, while the right vertices keep their highest reachable vertices.
The fibres containing $c$ in the left and right parts, of sizes $p$ and $r$,
merge into a single fibre of size $p+r-1$; all other fibres are unchanged, see
Figure~\ref{fig:gluing}.
In the two chains, $c$ is the largest element of the first and the smallest
element of the second.
Since
\[
 \sum_\beta x_\beta\,\elementaryE_{p-1}(x_1,\dotsc,x_{\beta-1})\,
 \elementaryE_{r-1}(x_{\beta+1},x_{\beta+2},\dotsc)=\elementaryE_{p+r-1}(\xvec),
\]
we obtain $\elementaryE_{\lambda(\theta_1\cup\theta_2)}
=\sum_\beta\elementaryE_{\theta_1}[\beta]\,\widehat{\elementaryE}_{\theta_2}[\beta]$,
with $\widehat{\elementaryE}_{\theta_2}[\beta]$ defined in the same way.
Multiplying the given identity by $\widehat\Psi(C_2)[\beta]$ and summing over
$\beta$ gives, for $\Psi\in\{\Kv,\Ev\}$,
\begin{align*}
 \Psi(C_1\times C_2)
 &=\sum_{\beta\geq1}\Psi(C_1)[\beta]\,\widehat\Psi(C_2)[\beta] \\
 &=\sum_{\beta\geq1}\sum_i
   z_i\Psi(C_{1,i})[\beta]\,\widehat\Psi(C_2)[\beta] \\
 &=\sum_i z_i\Psi(C_{1,i}\times C_2).
\end{align*}
\end{proof}

If $\Gamma$ is glued at $c$ from parts with $e_1$ and $e_2$ natural edges, then
$\Gamma$ has $e_1+e_2$ natural edges, and
$\mom(\Gamma)=\mom(\Gamma_{\leq c})+\mom(\Gamma_{\geq c})+(c-1)e_2$, since the
area sequence of the right part starts with $a_c=0$.
Hence replacing the left part by a graph which is smaller in
$(|E|,\mom)$ makes the glued graph smaller as well.

\subsection{The prefix}

Suppose that the natural graph of $\Gamma$ is connected, that is, $a_i>0$ for
$i>1$.
If no $a_i$ exceeds one, the natural graph is the path $1,2,\dotsc,n$.

\begin{lemma}\label{lem:prefix}
Suppose that the natural graph of $\Gamma$ is connected and not a path.
Let $j$ be the least index with $a_j\geq2$, put $b=b(j)=j-2$, and let
$c\geq j$ be the least index such that $c=n$ or $a_{c+1}=1$.
Then $\Gamma$ is glued at $c$ from $\Gamma_{\leq c}$ and $\Gamma_{\geq c}$, and
the natural graph of $\Gamma_{\leq c}$ is the path $1,\dotsc,b$ followed by a
graph on $[b,c]$ without articulations.
\end{lemma}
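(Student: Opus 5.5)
The plan is to work directly with the area sequence and the function $b(i)=i-a_i$, which is weakly increasing. Connectedness means $a_i\geq1$ for $i>1$.

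\emph{Step 1: the path part.} By minimality of $j$, we have $a_i=1$ for $1<i<j$, so $b(i)=i-1$ for these $i$ and the only natural edges ending at vertices $i<j$ are the edges $(i-1)i$. Since $a_j\leq a_{j-1}+1\leq 2$, we get $a_j=2$ and $b=b(j)=j-2$. Every vertex $i\geq j$ has $b(i)\geq b$, so no natural edge joins a vertex smaller than $b$ to a vertex larger than $b+1$. Hence the natural graph on $[1,b]$ is the path $1,\dotsc,b$, attached to the rest only through the edge $(b-1)b$ and through $b$.

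\emph{Step 2: the gluing at $c$.} For $j\leq i\leq c$ we have $a_i\geq2$; for $i=j$ this was shown above, and for $j<i\leq c$ it holds because $a_i\neq1$ and $a_i\geq1$. If $c<n$, then $a_{c+1}=1$, so $b(c+1)=c$. Since $b$ is weakly increasing, every vertex $v>c$ has $b(v)\geq c$. Therefore no natural edge joins a vertex $u<c$ to a vertex $v>c$, and $\Gamma$ is glued at $c$.

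\emph{Step 3: no articulations on $[b,c]$.} I will argue by induction along $i=j,\dotsc,c$ that the induced graph $G_i$ on $[b,i]$ is $2$-connected. The base case is $G_j$, the triangle on $b,b+1,j=b+2$, which is $2$-connected. For the induction step, the vertex $i+1\leq c$ has $a_{i+1}\geq2$. Its neighbours in $[b,i]$ are the interval $[b(i+1),i]$. This interval has at least two elements and lies inside $[b,i]$, because $b(i+1)\geq b(j)=b$. Adding a vertex adjacent to at least two vertices of a $2$-connected graph keeps it $2$-connected, which is the standard ear argument: removing any one vertex leaves the old graph connected, or removes an old vertex while $i+1$ still has a neighbour in the remaining connected graph.

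Finally, $[b,c]$ is exactly the vertex set of $G_c$. The edges of $\Gamma_{\leq c}$ are the path edges on $[1,b]$ together with the edges inside $[b,c]$, since by Step 1 no other edge crosses $b$. This gives the claimed description of $\Gamma_{\leq c}$.

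The main obstacle is the bookkeeping at the boundary vertex $b$. One must check that the edge $(b-1)b$ is the only connection from the path to the block, which relies on $b(i)\geq b$ for all $i\geq j$. The degenerate case $b=1$, where the path is the single vertex, needs no separate treatment.
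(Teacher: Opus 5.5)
Your proof is correct. Steps 1 and 2 are the same as in the paper: minimality of $j$ forces $a_i=1$ below $j$ and $a_j=2$, and $a_{c+1}=1$ gives $b(c+1)=c$, so monotonicity of $b$ rules out natural edges passing over $c$.

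In Step 1 you state the weaker claim that no edge joins a vertex below $b$ to one above $b+1$. The vertex $b+1=j-1$ is still covered, because your first observation gives $b(b+1)=b$. So the path meets the rest of the graph only at $b$, as you conclude.

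Step 3 takes a genuinely different route. The paper first proves a criterion: in a connected unit-interval graph, a vertex $v$ with $1<v<n$ is an articulation if and only if $a_{v+1}=1$. The reason is that when $a_{v+1}\geq 2$, the edge $(v-1)(v+1)$ bypasses $v$ on the Hamiltonian path $1,2,\dotsc$. From this criterion the paper reads off that $2,\dotsc,b$ are articulations. It also reads off, from the choice of $c$, that no vertex strictly between $b$ and $c$ is an articulation of the graph induced on $[b,c]$.

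You instead build the block vertex by vertex. You start from the triangle on $b,b+1,j$. Each later vertex $i+1\leq c$ has the interval $[b(i+1),i]\subseteq[b,i]$ of at least two earlier neighbours, and this preserves $2$-connectivity.

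Your version is self-contained and works directly with the neighbour intervals clipped by $b(i+1)\geq b$. It therefore never needs the area sequence $\min(a_i,i-b)$ of the induced graph on $[b,c]$, a step the paper leaves implicit. The paper's criterion gives an if-and-only-if characterization that is reusable, and it shows that $c$ is exactly the next cut vertex after $b$.
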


\begin{proof}
A vertex $v$ with $1<v<n$ is an articulation of a connected unit-interval graph
if and only if $a_{v+1}=1$.
Indeed, if $a_{v+1}=1$ then $b(v+1)=(v+1)-a_{v+1}=v$, so no natural edge passes
over $v$.
If $a_{v+1}\geq2$, the natural edge $(v-1)(v+1)$ bypasses $v$ in the path of
edges $i(i+1)$, which are natural since all $a_i$ are positive.
By the choice of $j$ we have $a_i=1$ for $1<i<j$ and $a_j=2$, so $b=b(j)$ and
the vertices $2,\dotsc,b$ are articulations.
For $i>b$ we have $a_i\leq i-b$, so no natural edge ends above $b$ and starts
below $b$.
By the choice of $c$, no vertex strictly between $b$ and $c$ is an
articulation of the graph induced on $[b,c]$, and $b(c+1)=c$ if $c<n$.
\end{proof}

\begin{figure}[!ht]
\centering
\LLTpic[cell=0.9em]{nnennneeneenneenee}{\LLTshade{1/2}{part1}\LLTshade{2/3,2/4,3/4,2/5,3/5,4/5,4/6,5/6}{part2}\LLTshade{6/7,6/8,7/8,8/9}{part3}}
\caption{The area sequence $\avec=(0,1,1,2,3,2,1,2,1)$, with $j=4$, $b=2$ and $c=6$ in
Lemma~\ref{lem:prefix}.
The prefix is the path $1,2$ (dotted) followed by the graph on $[2,6]$ without
articulations (hatched), and the whole graph is glued at $6$ from the prefix and
the graph on $[6,9]$ (cross-hatched).}
\label{fig:prefix}
\end{figure}

We call $\Gamma_{\leq c}$ the \defin{prefix} of $\Gamma$, see Figure~\ref{fig:prefix}.
For the prefix we put $a_{c+1}=0$.
The prefix has the same area sequence as $\Gamma$ up to $c$, and
$a_{c+1}<a_c$ both in $\Gamma$ and in the prefix, since $a_c\geq2$.

\begin{lemma}\label{lem:firstContact}
Let $\Gamma$ be a graph in which every corner is strict, with connected natural
graph which is not a path.
Let $t$ be the least index with $a_{t+1}<a_t$, where $a_{n+1}=0$, and put
$x=b(t)$ and $y=x+1$.
Then $a_t\geq2$, and after unmarking the strict edge ending at $y$, if there is
one, $(x,y,t)$ is an NN contact or $(b(x),x,y,t)$ is a DN contact.
\end{lemma}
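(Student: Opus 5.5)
The plan is a direct verification from the area sequence. The key observation is that, by the minimality of $t$, we have $a_{i+1}\geq a_i$ for all $i<t$. So $a_1,\dotsc,a_t$ is weakly increasing and $a_t=\max_{i\leq t}a_i$. The proof then checks, one by one, the conditions of Definition~\ref{def:contacts}, using only the facts that corners have distinct tails and heads, and that $uv$ is a corner if and only if $u=b(v)$ and $b(v+1)>b(v)$.

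First, $a_t\geq2$. If $a_t\leq1$, then $a_i\leq1$ for all $i\leq t$. If $t<n$, then $a_{t+1}<a_t\leq1$ forces $a_{t+1}=0$, contradicting connectivity ($a_i>0$ for $i>1$). If $t=n$, then every $a_i\leq1$ and the natural graph is a path, which is excluded. Hence $x=t-a_t\leq t-2$, so $x<y<t$.

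Next, from $a_{t+1}\leq a_t-1$ we get $b(t+1)\geq t+2-a_t=x+2>y$, with $b(n+1)=n+1$ if $t=n$. This gives the separation condition $b(t+1)>y$, and it shows $b(t+1)>b(t)$, so $xt$ is a corner and hence strict. It is the only strict edge leaving $x$, since corners have distinct tails. Also no strict edge leaves $y$: a corner with tail $y>x$ would have head $>t$, which would give a natural edge from $y$ to a vertex beyond $t$. This is impossible since $b$ is weakly increasing and $b(t+1)>y$. If a strict corner ends at $y$, we unmark it, as the statement allows (this is legitimate via Lemma~\ref{lem:decoration}(i) in the later application). After this step $y$ is not the endpoint of any strict edge.

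Now split according to the comparison of $a_y$ with $a_x$. Since $y\leq t$, monotonicity gives $a_y\in\{a_x,a_x+1\}$.

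\emph{Case $a_y=a_x+1$.} Then $b(x)=b(y)$. Since $b(x+1)=b(x)$, no corner ends at $x$, so no strict edge ends at $x$. Thus $(x,y,t)$ is an NN contact.

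\emph{Case $a_y=a_x$.} Then $x>1$: if $x=1$, then $a_x=0$, while $a_y=a_2\geq1$ by connectivity. Hence $a_x\geq1$, and $u=b(x)<x$. Because $b(x+1)=b(x)+1>b(x)$, the edge $ux$ is a corner, hence strict. It is the only strict edge ending at $x$, since corners have distinct heads. Finally $b(y)=u+1$, so $(u,x,y,t)=(b(x),x,y,t)$ is a DN contact.

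The only delicate points are the bookkeeping around the strict edges at $x$ and $y$, and the boundary cases $x=1$ and $t=n$. Once monotonicity of $a$ up to $t$ is in hand, the remaining checks are routine.
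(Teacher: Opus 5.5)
Your proof is correct and follows the paper's argument essentially step by step. Both use the monotonicity of $a_1,\dotsc,a_t$, exclude $a_t\leq1$ via connectivity and the non-path hypothesis, derive $b(t+1)\geq x+2>y$ (so $xt$ is a strict corner), and split into the cases $a_y=a_x+1$ (NN contact) and $a_y=a_x$ (DN contact with $u=b(x)$). The only cosmetic difference is how a strict edge leaving $y$ is ruled out: you use maximality of corners (its head $v$ would satisfy $b(v)=y>b(t)$, hence $v>t$, contradicting $b(t+1)>y$), whereas the paper lists the tails of all corners with heads at most $t$.
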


For the graph of Example~\ref{ex:NNcontact}, $t=5$ and $(2,3,5)$ is an NN
contact.  For the graph in Figure~\ref{fig:prefix}, $t=5$ and $(1,2,3,5)$ is a
DN contact when all corners are marked strict.
The two cases are drawn in Figure~\ref{fig:contacts}.

\begin{proof}
The areas $a_1,\dotsc,a_t$ are weakly increasing.
If $a_t=1$, then $a_i\leq1$ for all $i\leq t$ and $a_{t+1}=0$.
Connectedness forces $t=n$, and the natural graph would then be a path,
contrary to the hypothesis.
Hence $a_t\geq2$ and $x<y<t$.
Since $a_{t+1}<a_t$, the edge $xt$ is a corner, hence strict, and
$b(t+1)\geq x+2>y$.

For $i<t$ we have $b(i+1)-b(i)\in\{0,1\}$, and a corner ends at $i$ exactly when
$b(i+1)=b(i)+1$.
Hence the corners with heads at most $t$ have the tails $1,2,\dotsc,x$, each
exactly once, and the corners with heads above $t$ have tails at least
$b(t+1)\geq x+2$.
In particular, no strict edge leaves $y$, and $xt$ is the only strict edge
leaving $x$.

Since $y<t$, we have $a_x\leq a_y\leq a_x+1$.
If $a_y=a_x+1$, then $b(x)=b(y)$, no corner ends at $x$, and $(x,y,t)$ is an NN
contact once the corner ending at $y$, if any, is unmarked.
If $a_y=a_x$, then $b(y)=b(x)+1$.
The pair $xy$ is natural, so $a_x=a_y\geq1$, and $u=b(x)$ is the tail of the
strict corner $ux$.
Then $(u,x,y,t)$ is a DN contact once the corner ending at $y$, if any, is
unmarked.
\end{proof}

In the terminology of~\cite{AlexanderssonSulzgruber2022}, the four cases are
the contacts NN, ND, DN and DD at the first bounce point.
By Lemma~\ref{lem:decoration}(i), unmarking the corner at $y$ in the ND and DD
cases does not change a flip class in which this corner is present.

\subsection{Proof of Theorem~\ref{thm:main} for ordinary graphs}

We proceed by induction on $n$, then on $|E(\Gamma)|$, then on $\mom(\Gamma)$.
For $n\leq1$ there is only one orientation, and both values are $1$ or
$\elementaryE_1$.

Let $C$ be a flip class of $\Gamma$.
If some corner is absent from $C$, we apply Lemma~\ref{lem:absentCorner} and
induction.
Otherwise we mark all corners strict, by Lemma~\ref{lem:decoration}(i).
If the natural graph is disconnected, it is a disjoint union of graphs on
consecutive intervals; the flip class is a product of flip classes of these
graphs, both values are products, and we use induction on $n$.
If the natural graph is a path, all its edges are strict corners, there is
only one orientation, and both values equal $\elementaryE_n$.
If $\theta\cup S$ is disconnected for $\theta\in C$, or no orientation in $C$
has all corners as covers, we apply Lemma~\ref{lem:disconnected}
or~\ref{lem:noCover} and induction.

In the remaining case, let $\Gamma$ be glued at $c$ from its prefix and
$\Gamma_{\geq c}$ as in Lemma~\ref{lem:prefix}, and write $C=C_1\times C_2$.
Indeed, every flip takes place in one of the two parts, which meet only at $c$.
Let $\theta\in C$ be connected with all corners as covers.
Its restriction to the prefix is connected, and every corner of the prefix is a
corner of $\Gamma$ ending at most at $c$, whose cover test only involves
vertices up to $c$.
By Lemma~\ref{lem:firstContact} and Lemma~\ref{lem:decoration}(i), the prefix
has an NN or DN contact, possibly after unmarking one corner which is present in
$C_1$.
The prefix satisfies the hypothesis of Theorem~\ref{thm:connectivity}, and
Proposition~\ref{prop:isolation} gives an expansion~\eqref{eq:reduction}
$\Psi(C_1)[\beta]=\sum_iz_i\Psi(C_{1,i})[\beta]$ on $[1,c]$, with the color fixed at
the last vertex $c$ of the prefix.
By Lemma~\ref{lem:gluing},
\[
 \Psi(C)=\sum_iz_i\Psi(C_{1,i}\times C_2),\qquad \Psi\in\{\Kv,\Ev\}.
\]
Each $C_{1,i}\times C_2$ is a flip class of a graph on $[n]$ which is smaller in
$(|E|,\mom)$, by the remark after Lemma~\ref{lem:gluing}.
By induction $\Kv(C_{1,i}\times C_2)=\Ev(C_{1,i}\times C_2)$ for all $i$.
Since the same coefficients occur on both sides,
\[
 \Kv(C)=\sum_i z_i\Kv(C_{1,i}\times C_2)
 =\sum_i z_i\Ev(C_{1,i}\times C_2)=\Ev(C).
\]
This completes the proof of Theorem~\ref{thm:main} for ordinary graphs, given
Theorem~\ref{thm:connectivity}.

\section{Connecting orientations by flips}\label{sec:connectivity}

In this section we prove Theorem~\ref{thm:connectivity}.
We first move the edges of an orientation into a normal form determined by the
corners, Section~\ref{subs:preparation}.
Section~\ref{subs:slides} constructs explicit paths of flips which exchange two
edges and restore all others.
With these we treat complete graphs and graphs with at most two corner chains,
and then reduce the general case to smaller graphs by removing part of the
last clique or the first vertex.

Throughout, $\Gamma$ is an ordinary decorated unit-interval graph, and we only
consider orientations in which every corner is a cover.
In particular every corner is an edge of $\theta\cup S$, and by
Lemma~\ref{lem:decoration}(i) we may assume that all corners are strict.
We write $uv\in\theta$ if $uv$ is an edge of $\theta\cup S$, and covers always
refer to $\Pplus{\theta}$.
We write
\[
 vy\mapsto yh\quad[vh]
\]
for the flip which replaces the edge $vy$ by $yh$ and retains the long edge
$vh$, and similarly when the short edges are exchanged in the other direction.
By Lemma~\ref{lem:flipCriterion}, such an exchange is a flip as soon as the two
edges $vy$ and $vh$ are covers before it, and then $vh$ and $yh$ are covers
after it.
We repeatedly use the following observation: a directed path in $\theta\cup S$
between two vertices of an interval stays in that interval, so the cover test
of an edge $uv$ only involves edges inside $[u,v]$.

\subsection{Corner chains}\label{subs:preparation}

Distinct corners have distinct tails and distinct heads, so the corners form
disjoint directed paths, the \defin{corner chains}; a vertex on no corner is a
chain by itself.
Let $\defin{r(u)}$ be the last vertex of the corner chain containing $u$.
A natural edge $uv$ with $u$ and $v$ on the same chain is a corner: if $uh$ is
the corner leaving $u$, then $v\leq h$ by maximality of $uh$, and $v\geq h$
since $v$ comes after $u$ on the chain.
Let $\Gplus$ be the set of corners together with the normal edges $uv$ with
$r(u)<r(v)$.

If $\theta\subseteq\Gplus$, that is, every edge of $\theta\cup S$ lies in
$\Gplus$, then every corner is a cover.
Indeed, $r$ is constant along corners and increases along the other edges of
$\Gplus$, so a path between the endpoints $u,h$ of a corner only uses corners,
and the only corner leaving $u$ is $uh$.

\begin{lemma}\label{lem:preparation}
Every orientation $\theta$ in which all corners are covers is joined by flips to
an orientation $\theta'\subseteq\Gplus$ which contains every edge of
$\theta\cap\Gplus$.
\end{lemma}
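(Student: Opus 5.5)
We argue by induction on the number of \emph{bad} edges, the normal edges $uv\in\theta$ with $u<v$ and $r(u)\geq r(v)$. If there are none, then $\theta\subseteq\Gplus$ and we take $\theta'=\theta$. Otherwise we construct a single flip, or a short sequence of flips, that strictly decreases the number of bad edges. These flips must not remove any edge of $\Gplus$, and they must keep every corner a cover. Then $\theta\cap\Gplus$ only grows, and the induction terminates.

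To choose the flip, take a bad edge $uv$ of $\theta$ with $v-u$ minimal, and among those with $v$ maximal. Let $uh$ be the corner leaving $u$, which exists because $r(u)\geq r(v)>v>u$ unless $u$ lies on a corner; if $u$ is alone on its chain then $r(u)=u<r(v)$ and $uv$ is not bad. Since $uv$ lies inside a corner and $uh$ is the corner at $u$, we get $v<h$. The corner $uh$ is a cover, so $uv$ and $uh$ form a triangle $u<v<h$ with long edge $uh$. If $uv$ is also a cover, Lemma~\ref{lem:flipCriterion} gives the flip $uv\mapsto vh\ [uh]$. After it, $uh$ and $vh$ are covers, and the new edge $vh$ satisfies $r(v)\leq r(u)=r(h)$. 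So either $vh$ is again bad but strictly shorter than $uv$, or $r(v)=r(h)$. In the latter case $v$ and $h$ lie on the same chain, so by the observation above $vh$ is a corner, which is impossible for a short edge. Thus we replace the bad edge $uv$ by a strictly shorter edge starting at a larger vertex. To make this a decreasing measure, we use the multiset of lengths of bad edges rather than their count; this multiset strictly decreases, and no good edge is ever deleted since only $uv$ was removed. The other corners remain covers: a flip replaces $uv$ by $vh$, and any new path created inside a corner $fg$ would, together with $uh$, already give a path in $\theta$, so the cover test of every corner, which only involves edges inside its interval, is unaffected. I would check this carefully using that corners are incomparable intervals.

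The main obstacle is the case where the chosen bad edge $uv$ is not a cover. Then there is a path $u=z_0\to z_1\to\dots\to z_k=v$ in $\theta\cup S$ inside $[u,v]$ with $k\geq2$. Its first edge $uz_1$ is shorter than $uv$; since $r(z_1)$ versus $r(v)$ is unknown, it need not be bad. Here I would first try to delete the redundant edge by flipping it away, that is, to find a minimal-length bad edge that \emph{is} a cover. Consider the transitive reduction: among the edges out of $u$ that lie in $\theta$ and end in $[u,h)$, choose one that is a cover of $\Pplus{\theta}$ and whose head $v'$ has $r(v')\leq r(u)$. If $r(z_i)\leq r(u)$ along the path, some edge of it starting at a vertex of chain value at least $r(u)$ is bad, shorter, and contradicts minimality. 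So if the minimal bad edge is not a cover, I would show that some edge on the bypass path is bad and shorter, using that $r$ is weakly increasing along corner chains in the order of the graph. If this fails, I would change the induction to process bad edges by decreasing tail $u$, so that all edges leaving vertices above $u$ are already in $\Gplus$ and $r$ strictly increases along them, which forces the bypass to start with a bad edge at $u$ of smaller length.
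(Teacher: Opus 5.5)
There is a genuine gap: the claim that the flip $uv\mapsto vh\ [uh]$ keeps every other corner a cover is false. Your justification cannot work. $\theta$ contains no path from $v$ to $h$, since together with $uv$ it would bypass $uh$, so nothing in $\theta$ can replace the new edge $vh$.

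Corners may overlap. A corner $fg$ with $u<f\leq v<h<g$ contains $[v,h]$, and paths from $f$ to $v$ and from $h$ to $g$ in $\theta$ combine with $vh$ into a bypass of $fg$. Your selection rule does not exclude this. Take $\avec=(0,1,2,2,3,4,4)$. The corners are $13,26,37$, marked strict, with chains $1\to3\to7$, $2\to6$, $\{4\}$, $\{5\}$. Let $\theta\cup S=\{13,26,37,12,24,36\}$. All three corners are covers, and the bad edges are $12,24,36$, of lengths $1,2,3$.

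Your rule first flips $12\mapsto 23\ [13]$. In $\theta'=\theta-12+23$ the path $2\to3\to6$ bypasses the corner $26$. The next edge your rule chooses is $24$, but $2\lessdot 6$ fails in $\Pplus{\theta'}$. So $24\mapsto 46\ [26]$ is not a flip, Lemma~\ref{lem:flipCriterion} does not apply, and the invariant your induction relies on breaks after one step.

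Two smaller points. First, $vh$ is never bad: $r(v)<r(u)=r(h)$ puts it in $\Gplus$. Second, the case you call the main obstacle does not occur under your rule. A bypass from $u$ to $v$ must lower $r$ from $r(u)$ to $r(v)$, and the edge where $r$ drops is a shorter bad edge.

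The missing idea is the order in which bad edges are removed. The paper processes corner chains in decreasing order of their last vertex. It maintains that no edge leaves the union $U$ of processed chains. While a chain $B$ is processed, only the corners of $B$ need to be covers, and their intervals meet those of the new edges at most in endpoints. Every new edge ends in $B$. Once no edge leaves $U\cup B$, a bypass of a corner outside $U\cup B$ cannot use a new edge, because it could never leave $U\cup B$ again. So all corners are covers again at the end. In the example above, the paper's procedure also breaks $26$ temporarily, and this is harmless.

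Alternatively, your fallback of processing by decreasing tail repairs the real problem, not only the cover test of $uv$. Suppose every edge with tail above $u$ lies in $\Gplus$. Then along any path starting above $u$, the function $r$ is weakly increasing, and strictly increasing on normal edges. Hence the bad edge at $u$ with the least head is a cover. A bypass of a corner $fg$ with $f>u$ would force $r(g)>r(f)$, which is impossible. Corners with $f<u$ cannot contain $[v,h]$, by maximality of $uh$.
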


\begin{proof}
We process the corner chains in decreasing order of their last vertices.
Let $U$ be the union of the chains already processed.
We maintain that no edge of $\theta$ leaves $U$, and that all corners are covers
when we start a new chain; initially $U=\emptyset$.

Let $B$ be the next chain, with last vertex $t$.
Consider the edges $vy\in\theta$ with $v\in B$ and $r(y)<t$.
For such an edge, $v<y\leq r(y)<t$, so $v$ has a corner successor $h$ on $B$, and
$y<h$ since $vy$ does not contain the corner $vh$.
Choose a vertex $v$ with such an edge, and among its edges of this kind the edge
$vy$ with the least head.
Then $vy$ is a cover.
Indeed, a path from $v$ to $y$ cannot pass through $U$, since no edge leaves
$U$.
An edge from $v$ to a vertex $z\notin U$ has $r(z)\leq t$, by the choice of $t$.
If $r(z)=t$, it is the corner $vh$, and $h>y$; if $r(z)<t$, it is one of the
edges considered, and
$z\geq y$ by the choice of $vy$.
So the only path from $v$ to $y$ is the edge $vy$.
Since $vh$ is a cover as well, we can perform the flip $vy\mapsto yh\ [vh]$,
\[
 \EdgePic{3}{v,y,h}{1/3/s,1/2/b}\;\longrightarrow\;\EdgePic{3}{v,y,h}{1/3/s,2/3/b}\,,
\]
after which $vh$ is still a cover.
The new edge $yh$ lies inside the corner $vh$, so it is normal, and
$r(y)<r(h)=t$, so it lies in $\Gplus$ and is not one of the edges considered.
The intervals of the other corners of $B$ have interiors disjoint from that of
$vh$, so their cover tests are unchanged, and the edges inside $U$ are
unchanged.
Each flip removes one of the edges considered and creates none, so after
finitely many flips there are none left.

Then no edge leaves $U\cup B$, and every edge created while processing $B$ ends
in $B$.
A path bypassing a corner outside $U\cup B$ cannot use such an edge, since it
could not leave $U\cup B$ again; hence such a bypass would have existed before
we processed $B$.
Thus all corners are covers again.
When all chains have been processed, all edges of the orientation lie in
$\Gplus$, and only edges outside $\Gplus$ were removed.
\end{proof}

\subsection{Exchanging two edges}\label{subs:slides}

The next lemma exchanges two edges along two crossing covers, restoring all
other edges.

\begin{lemma}\label{lem:crossingSlide}
Let $a<b<c<d$, and let $\theta$ be an orientation in which $ac$ and $bd$ are
covers, $ab\in\theta$ and $cd\notin\theta$.
Then $\theta$ is joined by flips to the orientation $\theta-ab+cd$, in which $ac$
and $bd$ are again covers.
\end{lemma}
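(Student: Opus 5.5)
Since $a<b<c<d$ and $ac$, $bd$ are natural edges, the intervals give that $ab$, $bc$, $cd$, $ad$? (not necessarily), $ac$, $bd$ are natural; $ad$ need not be. The plan is a three-flip sequence through the triangles $a<b<c$ and $b<c<d$, using the middle edge $bc$ as an intermediary:
\[
 ab\mapsto bc\ [ac],\qquad bc\mapsto cd\ [bd],
\]
possibly preceded or followed by adjustments when $bc$ is already present. Throughout, the cover tests only involve edges inside the relevant intervals, and Lemma~\ref{lem:flipCriterion} reduces each flip to checking that the two edges retained and removed are covers before the move.

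\emph{Case $bc\notin\theta$.} First, $ab$ is a cover: a bypass of $ab$ from $a$ to $b$ followed by $bd$ would be fine, but preceded differently — more usefully, a path $a\to\dots\to b$ avoiding $ab$ together with $b$ reaching $c$? Not available; instead argue that if $ab$ is not a cover we can still proceed since $ac$ is a cover, and then reduce: since $ac$ is a cover and $ab\in\theta$, no path $b\to c$ exists, so $bc\notin\theta$ is automatic. If $ab$ is not a cover, first replace $\theta$ by the orientation in which the bypass is used; to avoid this I will instead show $ab$ is a cover directly whenever needed by choosing, among bypasses, the argument of Lemma~\ref{lem:preparation}: take the path $a=p_0\to p_1\to\dots\to b$ and note it stays in $[a,b]$; I expect to handle this by induction on $b-a$, applying the lemma recursively to the first edge. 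Assuming $ab$ is a cover, Lemma~\ref{lem:flipCriterion} gives the flip $ab\mapsto bc\ [ac]$, after which $ac$ and $bc$ are covers; $bd$ stays a cover because any new bypass of $bd$ must use $bc$, and then $c\to\dots\to d$ bypasses nothing new but would give $c<d$ reachable, and combined with... here I must check that $bc$ followed by a path $c\to d$ cannot exist since $cd\notin\theta$ and $bd$ was a cover (a path $c\leadsto d$ plus... ) — indeed before the flip $b\not\to c$, so the only new paths from $b$ go via $bc$, and a path $c\leadsto d$ in $\theta$ would not bypass $bd$ before but does now. So I need $c\not\leadsto d$; if $c\leadsto d$, then $a\to c\leadsto d$, and I use instead that this is consistent, handling it by first moving the path. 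Then $bc$ is a cover of the new orientation (it just became one) and $bd$ a cover, so $bc\mapsto cd\ [bd]$ is a flip, yielding $\theta-ab+cd$ with $bd$, $cd$ covers; finally check $ac$ remains a cover since its interval lost $ab$ and $bc$ and gained only $cd$ outside $[a,c]$.

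\emph{Case $bc\in\theta$.} This contradicts $ac$ being a cover together with $ab\in\theta$, so it does not occur. The main obstacle is the side condition $c\not\leadsto d$ and the coverness of $ab$; I expect to resolve both by induction on $d-a$, first applying the lemma to the inner configuration (e.g. to $a<p<c<\cdot$ along a bypass) to strip away obstructing edges, perform the two flips, and then undo the stripping by the inverse flips, checking covers stay intact by the interval-localization principle.
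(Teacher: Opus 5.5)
\textbf{There is a genuine gap.} Your core is right: the two flips $ab\mapsto bc\ [ac]$ and $bc\mapsto cd\ [bd]$ are the central step of the paper's proof. You also name both obstructions correctly. First, $ab$ need not be a cover of $\Pplus{\theta}$, since $a$ may have an edge $av$ with $a<v<b$ and a path from $v$ to $b$. Second, a directed path from $c$ to $d$ would bypass $bd$ once $bc$ is added. But you do not resolve either obstruction.

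The suggested ``induction on $d-a$, applying the lemma to $a<p<c<\cdot$'' does not work as stated. That configuration needs a cover from $p$ to some vertex $d'>c$ with $cd'\notin\theta$. Nothing guarantees such an edge is in $\theta$, or even natural: $pd$ need not be natural when $p<b$. And if you took $d'=d$, the lemma would insert $cd$ itself, which is the edge you must keep absent until the end. The remarks about ``handling it by first moving the path'' are not an argument either. As written, the proof only covers the case where $a$ has no edge into $(a,b)$ and no path of $\theta$ leads from $c$ to $d$.

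\textbf{The missing idea.} No recursion is needed: use the given covers $ac$ and $bd$ as long edges to clear the obstructions directly.
\begin{enumerate}[(1)]
\item Perform $av\mapsto vc\ [ac]$ for every $av\in\theta$ with $a<v<b$, in increasing order of $v$. When $av$ is moved, all edges from $a$ into $(a,v)$ are already gone, so $av$ is a cover. Also $vc\notin\theta$, because $ac$ is a cover. By Lemma~\ref{lem:flipCriterion}, $ac$ remains a cover. These flips change only edges with tails below $b$, so $bd$ is unaffected.
\item Symmetrically, perform $wd\mapsto bw\ [bd]$ for every $wd\in\theta$ with $c<w<d$, in decreasing order of $w$. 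These change only edges with heads above $c$.
\item Now $ab$ is a cover, since $a$ has no edge into $(a,b)$. No path leads from $c$ to $d$, since nothing enters $d$ from $[c,d)$. So your two central flips go through, and $bd$ stays a cover after the first one.
\item Undo the preparatory flips in reverse order. The net change $ab\to cd$ lies inside neither $[b,w]$ nor $[v,c]$. Hence each $bw$ and each $vc$ is a cover when moved back, exactly as right after it was created. By Lemma~\ref{lem:flipCriterion}, $ac$ and $bd$ stay covers throughout.
\end{enumerate}
This restores every other edge and ends at $\theta-ab+cd$.
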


\begin{proof}
All edges below lie inside $ac$ or $bd$, so they are natural and normal.
We perform the flips of Figure~\ref{fig:crossingSlide} in four stages:
\begin{enumerate}[(1)]
\item $av\mapsto vc\ [ac]$ for all $av\in\theta$ with $a<v<b$, in increasing order of $v$;
\item $wd\mapsto bw\ [bd]$ for all $wd\in\theta$ with $c<w<d$, in decreasing order of $w$;
\item $ab\mapsto bc\ [ac]$ and then $bc\mapsto cd\ [bd]$;
\item the flips of (2) in reverse, and then those of (1) in reverse.
\end{enumerate}
By Lemma~\ref{lem:flipCriterion}, each step is a flip as soon as its two edges
are covers before it, and then the retained cover stays a cover.
In (1), when $av$ is moved the edges from $a$ to smaller vertices have already
been moved, so $av$ is a cover; these flips only change edges with tails below
$b$, so $bd$ stays a cover.
Stage (2) is symmetric, and only changes edges with heads above $c$.
In (3), $ab$ is a cover since $a$ has no edge into $(a,b)$, and $bc\notin\theta$
since otherwise $ab,bc$ would bypass $ac$; after the first flip, $bd$ is still a
cover, since a bypass would begin with $bc$ and no path leads from $c$ to $d$.
In (4), the net change from $ab$ to $cd$ does not affect the edges inside
$[b,w]$ and $[v,c]$, so every edge is a cover when it is moved back, as after
the corresponding flip in (2) or (1).
\end{proof}

\begin{figure}[!ht]
\centering
\begin{gather*}
 \EdgePic[1.55em]{6}{a,v,b,c,w,d}{1/4/c,3/6/c,1/3/b,1/2/b,5/6/b}\xrightarrow{(1)}\EdgePic[1.55em]{6}{a,v,b,c,w,d}{1/4/c,3/6/c,1/3/b,2/4/b,5/6/b}\xrightarrow{(2)}\EdgePic[1.55em]{6}{a,v,b,c,w,d}{1/4/c,3/6/c,1/3/b,2/4/b,3/5/b}\\
 \xrightarrow{(3)}\EdgePic[1.55em]{6}{a,v,b,c,w,d}{1/4/c,3/6/c,3/4/b,2/4/b,3/5/b}\xrightarrow{(3)}\EdgePic[1.55em]{6}{a,v,b,c,w,d}{1/4/c,3/6/c,4/6/b,2/4/b,3/5/b}\xrightarrow{(4)}\EdgePic[1.55em]{6}{a,v,b,c,w,d}{1/4/c,3/6/c,4/6/b,1/2/b,5/6/b}
\end{gather*}
\caption{The flips in the proof of Lemma~\ref{lem:crossingSlide}, with one
edge $av$ with $a<v<b$ and one edge $wd$ with $c<w<d$.
The gray arcs above are the covers $ac$ and $bd$, which are retained
throughout, the arcs below are the edges which move, and the dotted line stands
for all other edges, which are restored at the end.}
\label{fig:crossingSlide}
\end{figure}

We now use Lemma~\ref{lem:crossingSlide} to move edges between two corner chains.
Let $uv$ be a normal natural edge such that $u$ has a corner successor $h$.
Then $v<h$, and $vh$ is natural since it lies inside $uh$.
We call $vh$ the successor of the edge $uv$.
Passing to the successor exchanges the two values of $r$ at the endpoints, and
both endpoints increase.
Every normal edge has at most one successor and at most one predecessor, so the
normal edges form disjoint sequences in which each edge is followed by its
successor.

\begin{lemma}\label{lem:linkPaths}
For two corner chains there is at most one such sequence whose edges join the
two chains.
Along this sequence, the edges in $\Gplus$ and the edges not in $\Gplus$
alternate, and two consecutive edges in $\Gplus$ have the form $ab$ and $cd$
with $a<b<c<d$ such that $ac$ and $bd$ are corners.
\end{lemma}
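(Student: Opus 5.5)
The proof follows the successor map along normal edges and uses only two facts: the endpoints of a corner are consecutive on their chain, and a natural edge lies inside some corner. Fix two corner chains $A$ and $B$, and call a normal natural edge an $AB$-edge if one endpoint lies on $A$ and the other on $B$. The plan has four steps: show that the $AB$-edges are closed under successor and predecessor, show that only one sequence of them can begin, read off the alternation from the chain swap, and finally write down two steps of the sequence explicitly.

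\emph{Closure under successor.} Let $uv$ be an $AB$-edge, and suppose $u$ has a corner successor $h$. Then $h$ lies on the chain of $u$, so the successor $vh$ again joins $A$ and $B$, with the roles of tail and head exchanged. It lies strictly inside the corner $uh$, so it is natural and normal.

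\emph{Closure under predecessor.} Let $pq$ be an $AB$-edge, and suppose $q$ has a corner predecessor $p_0$. Then $p_0q$ is a corner, so $p_0=b(q)\leq p$. Equality would make $pq$ a corner, but $pq$ is normal, so $p_0<p$. The edge $p_0p$ lies inside the corner $p_0q$, so it is natural. Its endpoints lie on different chains, so it is not a corner and is normal. Its successor is $pq$, because $p_0$ has corner successor $q$. Hence the predecessor of $pq$ is an $AB$-edge, and it exists exactly when $q$ is not the first vertex of its chain.

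\emph{Uniqueness of the sequence.} By the two closure steps, every sequence containing an $AB$-edge consists entirely of $AB$-edges. It has a first edge $pq$, and $q$ is the first vertex of $A$ or of $B$. Suppose two different sequences exist, with first edges $pq$ and $p'q'$.
\begin{itemize}
\item If $q=q'$, say $q$ is the first vertex of $B$, then $p\neq p'$ both lie on $A$; say $p<p'<q$. The corner leaving $p$ ends at a vertex of $A$ that is at most $p'$. On the other hand, $pq$ is natural and lies inside that corner, so the corner ends at or above $q>p'$. This is a contradiction.
\item If $q$ is the first vertex of $B$ and $q'$ the first vertex of $A$, then $p\in A$ and $p'\in B$. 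Hence $q'\leq p<q\leq p'<q'$, again a contradiction.
\end{itemize}
So at most one sequence joins the two chains.

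\emph{Alternation.} Passing to the successor exchanges the values of $r$ at the two endpoints, as noted before the lemma. The condition $r(\text{tail})<r(\text{head})$ therefore alternates along the sequence; this is membership in $\Gplus$ for normal edges.

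\emph{Form of two consecutive edges in $\Gplus$.} By alternation, two consecutive members of the sequence lying in $\Gplus$ are two steps apart. Write the first one as $ab$, and let $c$ be the corner successor of $a$. The successor of $ab$ is $bc$, with $b<c$. Let $d$ be the corner successor of $b$; then the successor of $bc$ is $cd$ with $c<d$, and $a<b$ since $ab$ is an edge. Thus $a<b<c<d$, and $ac$ and $bd$ are corners by construction.

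The only delicate point is the uniqueness step, which relies on natural edges nesting inside corners; the remaining claims follow directly from the definitions.
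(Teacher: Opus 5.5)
Your proof is correct and follows the paper's argument. The first edge of such a sequence must end at the first vertex of its chain, since otherwise it has a predecessor. Two candidate tails on the same chain would force the natural edge to contain the corner leaving the smaller one. Alternation and the pattern $ab\to bc\to cd$ with $a<b<c<d$ then come from the successor map exchanging the values of $r$.

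Your explicit closure of the joining edges under successor and predecessor, and your second uniqueness case, only spell out what the paper compresses into the remark that the tail of the first edge lies on a chain starting earlier.
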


\begin{proof}
Let $uv$ be the first edge of such a sequence.
If $v$ had a corner predecessor $p$, then $p<u$, since $uv$ does not contain
the corner $pv$, and then $pu$ would be a predecessor of $uv$.
Hence $v$ is the first vertex of its chain, and $u$ lies on a chain starting
earlier.
There is at most one such $u$ on a given chain: if $u<u'<v$ were two
candidates, the natural edge $uv$ would contain the corner leaving $u$.
This proves the first claim.
Passing to the successor exchanges the two values of $r$, so membership in
$\Gplus$ alternates.
If $ab$ is followed by two further edges, they use the corners leaving $a$ and $b$, say
$ac$ and $bd$, and the path continues as $ab\to bc\to cd$ with $a<b<c<d$.
\end{proof}

\begin{lemma}\label{lem:parallel}
Let $\theta,\theta'\subseteq\Gplus$ be orientations which have, for every pair
of corner chains, the same number of normal edges joining them.
Then $\theta$ and $\theta'$ lie in the same flip class.
\end{lemma}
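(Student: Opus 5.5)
The plan is to move $\theta$ to $\theta'$ one sequence at a time, exchanging edges only inside each of the successor sequences of Lemma~\ref{lem:linkPaths}. Both orientations lie in $\Gplus$, so every corner is a cover of both (as observed before Lemma~\ref{lem:preparation}), and all corners are present. The normal edges of $\Gplus$ are therefore partitioned according to the pair of corner chains they join. By Lemma~\ref{lem:linkPaths}, for each pair of chains these edges are the $\Gplus$-members of a single successor sequence $e_1,e_2,\dotsc,e_m$. Any two consecutive members $e_i=ab$ and $e_{i+1}=cd$ satisfy $a<b<c<d$ with $ac,bd$ corners. The hypothesis says that $\theta$ and $\theta'$ select the same number of edges from each such sequence. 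It therefore suffices to move the selected edges within one sequence while leaving every other edge unchanged.

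The basic move is Lemma~\ref{lem:crossingSlide}. Suppose $e_i=ab\in\theta$ and $e_{i+1}=cd\notin\theta$. The edges $ac$ and $bd$ are corners, hence covers, so Lemma~\ref{lem:crossingSlide} joins $\theta$ to $\theta-ab+cd$ by flips. The resulting orientation still lies in $\Gplus$, because $cd\in\Gplus$, so all corners remain covers and the move can be iterated. The reverse move, from $cd$ back to $ab$, is the same flip path traversed backwards. Selected edges can thus slide freely along the sequence, one step at a time into an empty neighbouring position, like particles on a line. Any two subsets of $\{e_1,\dotsc,e_m\}$ of equal size are connected by such adjacent moves, for instance by pushing all selected edges to the smallest positions in left-to-right order. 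Applying this in turn to every pair of chains, with all other edges kept fixed, transforms $\theta$ into $\theta'$.

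The one point to verify is that the hypotheses of Lemma~\ref{lem:crossingSlide} really hold at each step. We need $ac$ and $bd$ to be covers in the current orientation, and this holds because every corner is a cover whenever the orientation is contained in $\Gplus$. We also need the lemma to return exactly $\theta-ab+cd$, restoring all other edges, which is what its statement guarantees. Its proof assumes only that the moved edges are natural and normal, which they are, being contained in the corners $ac$ and $bd$. So I do not expect a serious obstacle. The step needing most care is to confirm that consecutive $\Gplus$-members of a sequence are exactly in the crossing configuration, including when the sequence starts or ends with an edge outside $\Gplus$. This is supplied by the second part of Lemma~\ref{lem:linkPaths}.
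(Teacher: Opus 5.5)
Your proposal is correct and follows essentially the paper's argument. You list the $\Gplus$-positions joining a pair of chains along the unique successor sequence of Lemma~\ref{lem:linkPaths}, use Lemma~\ref{lem:crossingSlide} to move an occupied position into an adjacent vacant one while restoring all other edges, and reduce to a configuration that depends only on the number of occupied positions. The only difference is cosmetic: the paper pushes edges to the last positions and terminates via the increasing sum of occupied indices, so it needs only the forward slide, whereas you push to the first positions using the reversed slide, which is equally valid because being joined by flips is symmetric.
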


\begin{proof}
List the positions in $\Gplus$ joining a given pair of chains in the order of
their sequence.
By Lemmas~\ref{lem:linkPaths} and~\ref{lem:crossingSlide}, we may replace an occupied
position followed by a vacant one by the reverse, restoring all other edges.
The sum of the indices of the occupied positions increases, so we reach the
configuration occupying the last positions of the list, which only depends on
the number of occupied positions.
We do this for every pair of chains, starting from $\theta$ and from $\theta'$.
\end{proof}

\begin{lemma}\label{lem:twoChains}
If there are at most two corner chains, then all orientations in which every
corner is a cover and which have the same number of ascending edges lie in one
flip class.
\end{lemma}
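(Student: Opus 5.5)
Given two orientations $\theta,\theta'$ in which every corner is a cover and with the same number of ascending edges, I will first apply Lemma~\ref{lem:preparation} to each. This joins them by flips to orientations contained in $\Gplus$. All corners are strict by Lemma~\ref{lem:decoration}(i), so $\theta\cup S$ and $\theta'\cup S$ have the same number of edges, and this number is preserved by flips. The reduced orientations therefore also have the same number of normal edges. With at most two corner chains, every normal edge of $\Gplus$ joins two distinct chains, and there is only one pair of chains. If there is a single chain, then every natural edge joining two vertices of the same chain is a corner, so there are no normal edges in $\Gplus$. The orientation in $\Gplus$ is then unique and we are done. If there are two chains, every normal edge in $\Gplus$ joins these two chains, so both reduced orientations have the same number of normal edges joining the unique pair of chains. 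Lemma~\ref{lem:parallel} then puts them in one flip class.

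One point needs care: an edge $uv$ with $r(u)=r(v)$ is a corner, as shown in Section~\ref{subs:preparation}. Also, a vertex lying on no corner counts as a chain by itself, so the hypothesis of at most two chains covers all vertices. Hence every normal edge joins the two chains, and the count condition of Lemma~\ref{lem:parallel} reduces to the total count of normal edges.

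The main obstacle is checking that Lemma~\ref{lem:preparation} genuinely applies here: its output must still have every corner as a cover, and it must land in $\Gplus$. Both facts are provided by that lemma and by the observation that $\theta\subseteq\Gplus$ forces all corners to be covers, which is what the hypotheses of Lemma~\ref{lem:parallel} require. Once that is confirmed, the rest is bookkeeping of edge counts.
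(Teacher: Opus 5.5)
Your proposal is correct and follows the paper's proof: you reduce both orientations to $\Gplus$ by Lemma~\ref{lem:preparation}, note that with one chain there are no normal edges in $\Gplus$, and with two chains every normal edge of $\Gplus$ joins the same pair of chains, so Lemma~\ref{lem:parallel} applies. Your extra remarks, that flips preserve the edge count and that $\theta\subseteq\Gplus$ makes every corner a cover, are exactly the bookkeeping the paper leaves implicit.
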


\begin{proof}
By Lemma~\ref{lem:preparation} we may assume that both orientations lie in
$\Gplus$.
With one chain, the only such orientation consists of the corners.
With two chains, every normal edge of $\Gplus$ joins the two chains, and
Lemma~\ref{lem:parallel} applies.
\end{proof}

\subsection{Complete graphs}\label{subs:complete}

In this subsection we label the vertices $0,1,\dotsc,n$, with $n\geq1$, and
consider the complete natural graph.
Its only corner is $0n$.
The next lemma is both the one-corner base case of
Theorem~\ref{thm:connectivity} and the local tool used later inside the last
maximal clique.

\begin{lemma}\label{lem:complete}
For the complete graph on $0,1,\dotsc,n$, the connected orientations in which
$0n$ is a cover and which have the same number of ascending edges lie in one
flip class.
\end{lemma}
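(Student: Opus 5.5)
We will connect every such orientation to a canonical normal form by flips, using induction on $n$. Throughout, $0n$ is the unique corner, so we mark it strict; it is a cover exactly when no directed path $0\to v_1\to\dotsb\to n$ of length at least two exists. The canonical form for $k$ ascending edges besides $0n$ is a fixed connected orientation built from "stars" at the two ends, for instance the edges $0n$, $1n,2n,\dotsc$ into $n$, followed if needed by edges $0v$, chosen so that no path from $0$ to $n$ other than $0n$ arises. The plan is to show that every connected orientation with $0n$ a cover can be pushed by flips towards this form while keeping $0n$ a cover and keeping connectivity. Connectivity is preserved automatically by Lemma~\ref{lem:flipInvariants}.

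\textbf{Step 1 (reducing to edges at $0$ and $n$).} Since $0n$ is a cover, no vertex $v$ has both $0\to v$ and $v\to n$ paths. So the inner vertices split into $A=\{v:0\le_{P}v\}$ and the rest, with $B=\{v: v\le_P n\}$ disjoint from $A$. I would first use flips with long edge $0n$, namely $0v\mapsto vn\ [0n]$, which are valid by Lemma~\ref{lem:flipCriterion} when $0v$ is a cover. Choosing the least head $v$ among edges $0v$, exactly as in Lemma~\ref{lem:preparation}, guarantees that $0v$ is a cover. This moves the attachments of $0$ onto $n$.

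\textbf{Step 2 (rearranging the inner edges).} Inner edges $uv$ with $0<u<v<n$ are handled by flips with long edges of the form $un$ or $0v$, and by Lemma~\ref{lem:crossingSlide} applied with the crossing covers $0c$ and $bd$ where available. The aim is to move every inner edge so that it ends at $n$, or to convert it into a star at $n$. I expect the key invariant to be that a single inner vertex $m$ (for instance $n-1$ or $1$) serves as a hub.

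\textbf{Step 3 (induction on $n$).} Consider the complete graph on $\{0,\dotsc,n\}\setminus\{m\}$ with $m$ chosen suitably, or on $0,\dotsc,n-1$ with the corner $0(n-1)$. Apply the inductive hypothesis there, checking that those flips remain flips in the big graph when the edges at the removed vertex are in the normal position. This is the cover-test locality remark.

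The main obstacle is Step 2/3. It requires showing that flips performed in a subgraph remain valid in the whole graph, that $0n$ stays a cover, and that the count of ascending edges at the removed vertex can be normalized independently of the rest. My first attempt would be to normalize the edges at vertex $n-1$ to a fixed pattern, all of them into $n-1$ from $B$-side vertices or a single edge $(n-1)n$, using flips with long edge $u n$. I would then induct on the complete graph on $0,\dotsc,n-1$ with $n$ contracted in. If that fails, I would fall back on checking by case analysis that Hwang-type local moves generate transitivity on the $A/B$ bipartitions of fixed size.
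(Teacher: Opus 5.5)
Your Step~1 is sound. It is exactly what the paper obtains from Lemma~\ref{lem:preparation}: afterwards $0n$ is the only edge at $0$, so $0n$ is automatically a cover and the remaining edges form a connected upward graph on $1,\dotsc,n$.

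Everything after that is left open, and the normal form you aim for does not exist in general. A configuration made only of stars at $0$ and at $n$ has at most $n$ edges, because an inner vertex joined to both $0$ and $n$ gives a bypass $0\to v\to n$ of $0n$. So it covers only spanning trees, while the lemma concerns every edge count from $n$ up to $1+\binom n2$ (counting $0n$). For larger counts, edges among $1,\dotsc,n-1$ are unavoidable, since each inner vertex has a single edge to $n$. Your aim of moving every inner edge to end at $n$ is therefore impossible.

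The real content of the lemma is to show that connected upward graphs on $1,\dotsc,n$ with a fixed number of edges are joined by flips, using $0$ only as an auxiliary vertex. Your Steps~2 and~3 give no mechanism for this; they offer only ``I expect'' and ``if that fails''. The locality you invoke for the induction is also not automatic. Deleting an inner vertex $m$ does not stop paths between the remaining vertices from passing through $m$, so cover tests in the smaller complete graph need not match those in the whole graph. For a sub-interval you would still need its corner to be a cover and the restriction to be connected.

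The paper supplies these missing pieces as follows.
\begin{enumerate}[(1)]
\item \emph{Normal form.} It fixes the form $P_n\cup X$: the cover $0n$, the path $1\to2\to\dotsb\to n$, and a set $X$ of chords.
\item \emph{Reaching it.} It applies $ij\mapsto jk\ [ik]$ until every vertex has at most one outgoing cover, so the covers form a tree with sink $n$. It then merges the children of $n$ through the vertex $0$ by the four flips~\eqref{eq:childMerge}, and recurses on $0,\dotsc,n-1$.
\item \emph{Locality.} It \emph{opens the chain} by the flips~\eqref{eq:opening}. Afterwards $0$ is adjacent only to $m,\dotsc,n$, so paths between vertices of a window $W=\{0,p,\dotsc,m\}$ stay in $W$, and flips of the complete graph on $W$ lift.
\item \emph{Rearranging chords.} Chord sets of equal size are connected by induction over the two windows $\{0,\dotsc,n-1\}$ and $\{0,2,\dotsc,n\}$. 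These share the chord $24$ and cover every chord position except $1n$. There is an explicit table for $n=4$, and a separate construction~\eqref{eq:outerChord}, plus the $|X|=1$ case, changes the status of $1n$.
\end{enumerate}
None of these ingredients, or substitutes for them, appear in your proposal.
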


\begin{proof}
Let $P_n$ consist of $0n$ and the edges $i(i+1)$ for $1\leq i<n$.
We call the other pairs $ij$, with $1\leq i<i+1<j\leq n$, the chords of $P_n$.
We first join every orientation in question to an orientation $P_n\cup X$, where
$X$ is a set of chords, and then show that the orientations $P_n\cup X$ with the
same number of chords lie in one flip class.

\smallskip\noindent\textbf{Step 1: reaching $P_n\cup X$.}
Here $r(0)=r(n)=n$ and $r(v)=v$ otherwise, so $\Gplus$ consists of $0n$ and all
edges on $1,\dotsc,n$.
By Lemma~\ref{lem:preparation} we may assume that $0n$ is the only edge at $0$.
Since $\theta$ is connected, the set $\eta$ of the remaining edges is connected on
$1,\dotsc,n$.
Whenever a vertex $i$ has two outgoing covers $ij$ and $ik$ in $\eta$, with $j<k$,
we perform the flip $ij\mapsto jk\ [ik]$.
The sum of the tails of the edges increases, so this terminates.
Then every vertex has at most one outgoing cover.
The underlying undirected graph of the covers is connected.
It contains no cycle, since the least vertex of a cycle would have two outgoing
covers.
Thus the covers form a tree with unique sink $n$.
Every edge of $\eta$ joins a vertex to one of its ancestors in this tree.

If $i<j$ are two children of $n$, their only outgoing edges are $in$ and $jn$,
and the four flips
\begin{equation}\label{eq:childMerge}
 in\mapsto0i\ [0n],\qquad jn\mapsto 0j\ [0n],\qquad
 0i\mapsto ij\ [0j],\qquad 0j\mapsto jn\ [0n]
\end{equation}
replace $in$ by $ij$, see Figure~\ref{fig:childMerge}.
For the first two, the vertex losing its edge to $n$ becomes a sink, so the new
edge from $0$ is a cover and $0n$ remains one.
At the third flip, $i$ and $j$ are sinks; afterwards $0$ is adjacent only to
the sink $j$ and to $n$, which gives the last flip.
In the tree, the branch $i\to n$ becomes $i\to j\to n$, and every edge still
joins a vertex to an ancestor.
The vertex $n-1$ is always a child of $n$.
Applying~\eqref{eq:childMerge} with $j=n-1$ until $n-1$ is the only child of
$n$, the edges on $1,\dotsc,n-1$ form a connected graph.
Then we perform $(n-1)n\mapsto 0(n-1)\ [0n]$ and apply the construction
inductively on $0,\dotsc,n-1$.
These flips remain flips in the whole graph, since a path between two vertices
below $n$ cannot pass through $n$.
At the end, $0$ is adjacent only to $n-1$ and $n$, and $n-1$ has no outgoing
edge, so we can perform $0(n-1)\mapsto(n-1)n\ [0n]$.
We have reached an orientation $P_n\cup X$.

\begin{figure}[!ht]
\centering
\begin{gather*}
 \EdgePic{4}{0,i,j,n}{1/4/c,2/4/b,3/4/b}\longrightarrow\EdgePic{4}{0,i,j,n}{1/4/c,1/2/b,3/4/b}\longrightarrow\EdgePic{4}{0,i,j,n}{1/4/c,1/2/b,1/3/b}\\
 \longrightarrow\EdgePic{4}{0,i,j,n}{1/4/c,2/3/b,1/3/b}\longrightarrow\EdgePic{4}{0,i,j,n}{1/4/c,2/3/b,3/4/b}
\end{gather*}
\caption{The four flips~\eqref{eq:childMerge}, which replace $in$ by $ij$; the
cover $0n$ (gray) is retained throughout.
The children $i$ and $j$ of $n$ become the chain $i\to j\to n$.}
\label{fig:childMerge}
\end{figure}

\smallskip\noindent\textbf{Step 2: working in a smaller interval.}
Let $0<p\leq m\leq n$, and consider two orientations $P_n\cup X$ and
$P_n\cup X'$ which agree on all pairs not contained in $W=\{0,p,p+1,\dotsc,m\}$.
We show that a path of flips between their restrictions to $W$, in the complete
graph on $W$, lifts to a path between them.
First we open the chain by the flips
\begin{equation}\label{eq:opening}
 j(j+1)\mapsto 0j\quad[0(j+1)],\qquad j=n-1,n-2,\dotsc,m.
\end{equation}
Before such a flip, all edges from $0$ end at $j+1$ or above, so $0(j+1)$ is a
cover, and these are flips regardless of the chords.
Now the edges at $0$ are $0m,0(m+1),\dotsc,0n$, and the restriction to $W$ is
of the same form for the complete graph on $W$.
A path between two vertices of $W$ stays in $W$: it cannot use a vertex below
$p$, since no edge from $0$ goes there, and it cannot return from above $m$.
Hence flips in the complete graph on $W$ are flips of the whole graph.
Opening $P_n\cup X'$ in the same way gives its restriction to $W$, with the same
edges outside $W$, so after the path inside $W$ we close the chain by reversing
the opening of $P_n\cup X'$.

\smallskip\noindent\textbf{Step 3: rearranging the chords.}
We show by induction on $n$ that the orientations $P_n\cup X$ with the same
number of chords lie in one flip class.
For $n\leq3$ there is at most one chord.
For $n=4$ the chords are $13$, $14$ and $24$.
In the following table, a triple $ijk$ denotes the flip exchanging $ij$ and
$jk$ across $ik$, in the direction allowed by the current state, and each row
starts and ends with $P_4$ together with the indicated chords:
\begin{center}
\begin{tabular}{@{}ll@{}}
\toprule
Chords & Flips \\
\midrule
$\{13\}\to\{14\}$ & $034,023,013,014,023,034$ \\
$\{14\}\to\{24\}$ & $034,124,014,023,012,023,034$ \\
$\{13,24\}\to\{14,24\}$ & $034,024,012,014,024,013,023,012,023,034$ \\
$\{14,24\}\to\{13,14\}$ & $034,023,012,013,023,124,034$ \\
\bottomrule
\end{tabular}
\end{center}
The cover conditions are checked directly at each step.
These rows connect all sets of one or two chords.

Let $n\geq5$.
By Step~2 and induction applied to $W=\{0,1,\dotsc,n-1\}$ and to
$W=\{0,2,3,\dotsc,n\}$, we can rearrange the chords inside either set at a fixed
number, restoring all other edges.
The two sets of chord positions cover all positions except $1n$, and both
contain $24$.
Hence we can exchange an occupied chord position with a vacant one, provided
neither is $1n$.
If no single set contains both positions, we route through $24$.
When $24$ is vacant, we move the selected chord first to $24$ and then to the
desired position.
When $24$ is occupied, we move the chord at $24$ first to the vacant position
and then move the selected chord to $24$.
This connects all orientations $P_n\cup X$ with the same number of chords and the same
status of $1n$.

It remains to change the status of $1n$.
Since the orientations with each fixed status are already connected, it is
enough to join the two statuses whenever both occur.
Suppose $1n\notin\theta$ and $13,2n\in\theta$.
We perform $in\mapsto 0i\ [0n]$ for all $in\in\theta$ with $0<i<n$, in decreasing
order of $i$.
When $in$ is moved, the edges into $n$ with larger tails have been moved.
Hence $in$ is a cover, and the vertices other than $n$ adjacent to $0$, which are
larger than $i$, cannot reach $n$, so $0n$ is a cover.
Now $02\in\theta$ and $01\notin\theta$, so $02$ is a cover, and we perform the
flip $12\mapsto 01\ [02]$.
Then we reverse these flips in increasing order of $i$, including the new edge
$01$.
When $0i$ is moved back to $in$, it has the least head among the edges at $0$,
and the edges into $n$ restored so far have smaller tails, so $0i$ and $0n$ are
covers.
We obtain $\theta-12+1n$.
To restore $12$, we open the chain down to $3$ by~\eqref{eq:opening}.
On $0,1,2,3$ the edges are then $03$, $13$ and $23$, and~\eqref{eq:childMerge}
with $(i,j,n)=(1,2,3)$ replaces $13$ by $12$.
Explicitly, its four flips are
\[
 13\mapsto01\ [03],\qquad 23\mapsto02\ [03],\qquad
 01\mapsto12\ [02],\qquad 02\mapsto23\ [03].
\]
They restore $12$ and $23$, remove $13$, and retain $03$; Step~2 carries this
local sequence out in the whole graph.
Closing the chain gives
\begin{equation}\label{eq:outerChord}
 \theta\longmapsto\theta-13+1n\qquad(13,2n\in\theta,\ 1n\notin\theta).
\end{equation}
Let $\theta=P_n\cup X$; there are $\binom{n-1}{2}$ chords in total.
If $2\leq|X|\leq\binom{n-1}{2}-1$, we can first arrange $13,2n\in\theta$ and
$1n\notin\theta$, and~\eqref{eq:outerChord} connects the two statuses of $1n$.
If $|X|=0$ or $|X|=\binom{n-1}{2}$, there is only one set of chords.
If $|X|=1$, start from $P_n+13$, open the chain down to $2$, perform
$13\mapsto01\ [03]$ and $01\mapsto 1n\ [0n]$, and close the chain.
While the chain is open, the vertices $2,\dotsc,n$ are sinks, so these are flips,
and we reach $P_n+1n$.
This completes the induction, and the proof.
\end{proof}

\subsection{Deleting a chain or an edge}

In this subsection, connectivity means connectivity of undirected graphs.
The final induction removes a corner chain or the first vertex.
The next two lemmas ensure that the remaining graph in $\Gplus$ stays connected
and that a prescribed normal edge can be avoided.
Suppose that the natural graph is connected, has at least three vertices, and
has no articulation.
Then every pair $uv$ with $v-u\leq2$ is a natural edge.
Indeed, if $u(u+1)$ were not natural, then by closure under subintervals no
natural edge would join $[1,u]$ and $[u+1,n]$; and if $u(u+2)$ were not natural,
no natural edge would pass over $u+1$, which would be an articulation.

Replacing the natural graph by $\Gplus$ preserves connectivity within any union
of corner chains.
Indeed, if $uv$ is normal with $r(u)>r(v)$, and $uh$ is the corner leaving $u$,
then $u<v<h$, the edge $vh$ lies in $\Gplus$, and the walk $u,h,v$ only visits
the chains of $u$ and $v$.

\begin{lemma}\label{lem:chainDeletion}
Deleting the vertices of one corner chain leaves the other vertices connected
in $\Gplus$.
\end{lemma}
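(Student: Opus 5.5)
By the observation just made, $\Gplus$ restricted to the remaining vertices is connected as soon as the natural graph induced on them is. So the plan is to show that, after deleting the vertices of one corner chain $B$, the remaining vertices are still connected in the natural graph. Then we transfer this to $\Gplus$ using the walk $u,h,v$, which only visits the chains of $u$ and $v$, neither of which is $B$.

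The key structural fact is that a corner chain visits each window of consecutive vertices sparsely. If $u<v$ are consecutive on $B$, then $uv$ is a corner, so $v$ is the largest vertex joined to $u$. Because every pair at distance at most two is natural, we get $v\geq u+2$. Hence $B$ never contains two consecutive integers.

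Now take two remaining vertices $z<z'$ and walk upward from $z$. If $z+1\notin B$, step along the natural edge $z(z+1)$. If $z+1\in B$, then $z+2\notin B$, and the edge $z(z+2)$ is natural, so we step to $z+2$. Continuing this way reaches $z'$ while avoiding $B$, except possibly at the top end. There the last vertex $n$ or $n-1$ may lie in $B$, but that causes no trouble, since we only need to reach remaining vertices, and $z'$ is one of them. So the remaining vertices are connected by natural edges, each of length one or two, that avoid $B$.

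It remains to replace each such natural edge $uv$ by a walk in $\Gplus$ inside the remaining vertices. If $uv$ is a corner or has $r(u)<r(v)$, it already lies in $\Gplus$. Otherwise $r(u)>r(v)$, and we use the walk $u,h,v$, where $h$ is the corner successor of $u$. This $h$ lies on the chain of $u$, which is not $B$, so the walk avoids $B$. The case $r(u)=r(v)$ with $uv$ normal cannot occur, because an edge within one chain is a corner. The main obstacle is the parity argument showing that $B$ has no two consecutive vertices. It relies on the absence of articulations and on $n\geq3$; the degenerate case where the deletion leaves at most one vertex is trivial.
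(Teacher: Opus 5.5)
Your proof is correct and is essentially the paper's argument: no chain contains two consecutive integers, so the remaining vertices in increasing order are joined by natural edges of length one or two, and each such edge outside $\Gplus$ is replaced by the walk $u,h,v$, which stays on the chains of $u$ and $v$. One small boundary slip: the bound $v\geq u+2$, taken from maximality of the corner leaving $u$, needs $u+2\leq n$. For $u=n-1$ you should instead note that $(n-1)n$ lies inside the natural edge $(n-2)n$ and so is not a corner; this is why the paper invokes both $(v-1)(v+1)$ and $v(v+2)$.
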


\begin{proof}
Two consecutive vertices $v,v+1$ are not on the same chain, since their edge
would be a corner, while it lies inside $(v-1)(v+1)$ or $v(v+2)$.
Hence the remaining vertices, listed in increasing order, have differences at
most two, and consecutive ones are joined by natural edges.
We replace the edges outside $\Gplus$ by the walks above, which avoid the deleted
chain.
\end{proof}

\begin{lemma}\label{lem:removableEdge}
If there are at least three corner chains, then deleting any normal edge $uv$ of
$\Gplus$ leaves $\Gplus$ connected.
\end{lemma}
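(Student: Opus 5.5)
The plan is to reduce the statement to Lemma~\ref{lem:chainDeletion}, applied twice, using that every corner chain is internally connected by its corners and that a third chain exists. We work under the standing hypotheses of this subsection: the natural graph is connected, has at least three vertices and has no articulation.

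Let $uv$ be a normal edge of $\Gplus$. Let $A$ be the corner chain containing $u$ and $B$ the corner chain containing $v$. Since $uv$ is normal and lies in $\Gplus$, we have $r(u)<r(v)$, so $A\neq B$. By hypothesis there is a third corner chain $D$, distinct from $A$ and $B$.

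First, each corner chain is connected by its own corners. A corner is never normal, so these corners all survive the deletion of $uv$. It therefore suffices to show that in $\Gplus-uv$ every chain is joined to $D$.

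Take a chain $X\neq A$. Apply Lemma~\ref{lem:chainDeletion} to the chain $A$: the vertices outside $A$ are connected in $\Gplus$ using only edges with both endpoints outside $A$. Such a path cannot use $uv$, since $u\in A$. Both $X$ and $D$ lie outside $A$, so $X$ is joined to $D$ in $\Gplus-uv$.

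Now take the chain $A$ itself. Apply Lemma~\ref{lem:chainDeletion} to the chain $B$ instead: the vertices outside $B$, which include $A$ and $D$, are connected by a path avoiding $B$. That path avoids $uv$, since $v\in B$. So $A$ is joined to $D$ as well.

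Hence all vertices are connected to $D$ in $\Gplus-uv$, which is the claim.

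The only delicate point is to check that the connecting paths produced by Lemma~\ref{lem:chainDeletion} really stay inside the induced subgraph of $\Gplus$ on the remaining vertices. Its proof replaces edges outside $\Gplus$ by the walks $u,h,v$, and these only visit the chains of $u$ and $v$. Since the chains are vertex-disjoint, the paths never touch the deleted chain, and in particular never use $uv$. The hypothesis of at least three chains is used exactly to provide the intermediate chain $D$. With only two chains the argument fails, and indeed $uv$ could then be a bridge.
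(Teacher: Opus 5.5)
Your proof is correct and follows essentially the paper's argument: two applications of Lemma~\ref{lem:chainDeletion}, first deleting the chain of $u$ and then the chain of $v$, routed through a third chain. The paper only joins $u$ to $v$ through a vertex $w$ of the third chain, relying on $\Gplus$ being connected already, whereas you join every chain to $D$ directly, which also reproves that connectivity.
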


\begin{proof}
Choose a vertex $w$ on a third chain.
By Lemma~\ref{lem:chainDeletion}, there is a path from $u$ to $w$ avoiding the
chain of $v$, and a path from $w$ to $v$ avoiding the chain of $u$.
Neither uses $uv$.
\end{proof}

\subsection{The last clique}

Suppose that the last two corners are $bc$ and $an$, with $b<a<c<n$.
Let $L=[1,c]\cup\{n\}$, and let $\Gamma[L]$ be the graph induced on $L$, with the
inherited order and strict edges.
For an orientation $\theta$ we write $\theta[L]$ for its edges inside $L$.
All corners other than $an$ end at most at $c$, so the vertices in $(c,n)$ lie
on no corner.
Hence $\Gamma[L]$ has the same corners as $\Gamma$, its corner chains are those of
$\Gamma$ except for the singleton chains of the vertices in $(c,n)$, and its
set $\Gplus$ consists of the edges of $\Gplus$ inside $L$.
Every vertex strictly between $a$ and $n$ is the last vertex of its chain.
Hence an orientation in $\Gplus$ has no edge from $a$ other than $an$, and
every natural edge with both endpoints strictly between $a$ and $n$, or ending
at $n$ and starting after $a$, lies in $\Gplus$.

\begin{lemma}\label{lem:lastClique}
If Theorem~\ref{thm:connectivity} holds for $\Gamma[L]$, then it holds for
$\Gamma$.
\end{lemma}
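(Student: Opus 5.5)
Let $\theta,\theta'$ be orientations of $\Gamma$ as in Theorem~\ref{thm:connectivity}: connected, with every corner a cover, and with the same number of ascending edges. By Lemma~\ref{lem:preparation} we may assume $\theta,\theta'\subseteq\Gplus$, so no edge leaves $a$ except $an$. The plan is to push every edge meeting the open interval $(c,n)$ into a normal form using flips inside the last clique $[a,n]$ (a complete graph), so that what remains is governed by $\Gamma[L]$. We then invoke the hypothesis on $\Gamma[L]$, lift its flips back to $\Gamma$, and finally arrange the clique part with Lemma~\ref{lem:complete}.

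The steps, in order, are as follows.

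\emph{Normal form on the clique.} The vertices of $(c,n)$ lie on no corner, and every edge at such a vertex lies inside $[a,n]$, since a natural edge reaching into $(c,n)$ starts at or after $a$. Apply the Step~1 procedure of Lemma~\ref{lem:complete} to the complete graph on $\{a\}\cup(a,n]$, with $an$ as the retained cover. Its flips stay inside $[a,n]$; the cover test of any edge of $\theta$ only involves vertices in its interval; and $an$ is a cover throughout. Hence these flips are flips of $\Gamma$ and keep $bc$ and all other corners as covers. For the corner $bc$, which crosses into $[a,c]$, this needs a separate check. I would verify it via the observation that a bypass of $bc$ stays in $[b,c]$ and that the clique flips only trade edges among $[a,n]$ without creating paths into $c$ from below $a$. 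The first attempt would be to perform only Step~1 flips with heads in $(c,n]$.

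After this step every vertex $z\in(c,n)$ should carry a single edge $z(z+1)$ in a chain $c'\to\dots\to n$ (through Step~2's opening \eqref{eq:opening}, down to $m=c+1$ or so). All remaining edges then lie inside $L$, apart from a fixed chain. Connectivity is preserved by Lemma~\ref{lem:flipInvariants}.

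\emph{Reduction to $\Gamma[L]$.} Contract the fixed chain on $(c,n)$, i.e.\ regard the orientation as one of $\Gamma[L]$ with the edge into $n$ replaced appropriately. Then $\theta[L],\theta'[L]$ are connected, have all corners as covers (same corners, by the discussion before the lemma), and have equal ascending counts, provided both were normalized with the same number of edges on $(c,n)$. If the counts differ, first move chords between $(c,n)$ and $L$ using the chord-rearrangement of Step~3 of Lemma~\ref{lem:complete} on $W=\{a\}\cup[a+1,n]$ via Step~2. We must also check that $\Gamma[L]$ satisfies the hypothesis (path followed by a graph without articulations); deleting $(c,n)$ from the last clique keeps $c,n$ adjacent through $a$, and pairs at distance $\le2$ remain natural. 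By assumption $\theta[L]$ and $\theta'[L]$ are flip-equivalent in $\Gamma[L]$. Each flip of $\Gamma[L]$ lifts to a flip of $\Gamma$, because paths through $(c,n)$ only enter via the fixed chain toward $n$ and cannot create bypasses inside $L$ except for edges ending at $n$. I would handle those as in Step~2: open the chain before, close it after.

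\emph{Main obstacle.} I expect the hardest part to be the lifting of flips and the balancing of edge counts between $(c,n)$ and $L$. Specifically, one must show that the cover conditions in $\Gamma[L]$ and in $\Gamma$ coincide for flips whose long edge ends at $n$, and that a chord can be transferred between the clique interior and $L$ when the counts differ. For the transfer I would try an exchange analogous to \eqref{eq:outerChord}, using the corner $an$ as the long edge.
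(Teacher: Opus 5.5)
There is a genuine gap. Your outline uses the right ingredients: reduce to $\Gplus$, apply Lemma~\ref{lem:complete} on the last clique, and invoke the hypothesis on $\Gamma[L]$ once its flips lift. But the step that carries the weight, balancing edges between the clique and the rest of $L$, does not work.

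Every flip whose long edge lies in $[a,n]$ has both short edges in $[a,n]$. So Step~1 and the chord rearrangement of Lemma~\ref{lem:complete} on $[a,n]$, the opening \eqref{eq:opening}, and any analogue of \eqref{eq:outerChord} across $an$ all preserve the number of edges of $\theta$ inside $[a,n]$. Your normal form, in which the vertices of $(c,n)$ carry only a fixed chain, is therefore unreachable in general. Only a bounded number of clique positions avoid $(c,n)$ (just $ac$, $an$, $cn$ when $c=a+1$), so surplus chords at $(c,n)$ cannot be cleared. Likewise, two orientations that differ by moving one edge from inside $(a,n)$ to a position below $a$ are not joined by any of the flips you propose for the balancing. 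In addition, Step~1 of Lemma~\ref{lem:complete} needs a connected orientation of the complete graph, while $\theta\cap[a,n]$ is in general disconnected, since vertices of $(a,c]$ may be attached only from below $a$.

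The missing idea is a flip sequence whose long edges straddle $a$. The paper applies Lemma~\ref{lem:crossingSlide} to the corners $bc$ and $an$, which exchanges $ba$ for $cn$. It then routes every exchange between a position inside $L$ and a position in $(a,n)$ through the common position $ba$.

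The reduction to $\Gamma[L]$ by contracting the chain is also not valid. It replaces a path through $(c,n)$ by a single edge of $\Gamma[L]$ and changes the edge count. A flip of $\Gamma[L]$ that uses this edge as a short or long edge is not a flip of $\Gamma$, so paths of flips do not lift. The hypothesis on $\Gamma[L]$ also requires $\theta[L]$ to be connected, which you assert but do not prove.

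The paper avoids contraction altogether, in three steps:
\begin{enumerate}[(1)]
\item It makes $\theta[L]$ connected. It turns the clique component into a star at $n$ by Lemma~\ref{lem:complete}, then shortens paths of covers by flips $jz\mapsto ij\ [iz]$.
\item It moves each $vn$ with $c<v<n$ to $av$ across $an$. Afterwards no path leaving $L$ returns to $L$, so flips of $\Gamma[L]$ lift verbatim with all edges outside $L$ fixed, and these moves are undone afterwards.
\item It installs a fixed spanning tree $T$, consisting of the corners and all $vn$ with $a<v<n$. This keeps both $\theta[L]$ and the clique connected whatever the remaining edges are.
\end{enumerate}
Only with the guarantee in (3) can the hypothesis on $\Gamma[L]$ and Lemma~\ref{lem:complete} each exchange arbitrary occupied and vacant positions in their own region. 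Together with the $ba\leftrightarrow cn$ slide, this connects all orientations with the same number of edges.
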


\begin{proof}
Let $\theta$ be a connected orientation in which all corners are covers, and
assume $\theta\subseteq\Gplus$ by Lemma~\ref{lem:preparation}.
We first make the restriction to $L$ connected, so that the inductive
hypothesis can change it independently of the remaining vertices.
We then put a fixed spanning tree $T$ into the orientation.
The last step routes exchanges between the positions inside and outside $L$
through their common position $ba$; Figure~\ref{fig:lastCliqueFlips} shows the
flips used.

All pairs in $[a,n]$ are natural, and a path between two vertices of a connected
component of the edges of $\theta$ inside $[a,n]$ stays in that component.
So flips of the complete graph on such a component are flips of $\Gamma$, and
Lemma~\ref{lem:complete} applies to the component containing $a$ and $n$, since
$an$ is a cover.

\smallskip\noindent\textbf{Step 1: making $\theta[L]$ connected.}
In the component containing $a$ and $n$, the vertex $a$ is only adjacent to $n$,
and by Lemma~\ref{lem:complete} we replace its edges by the star of all edges to
$n$, together with edges avoiding $a$; this is possible since a connected graph
on $k$ vertices with a leaf has between $k-1$ and $1+\binom{k-1}{2}$ edges.
Every other component meets $(a,c]$, since otherwise it would be a connected
component of $\theta$.
Join its least vertex $u$ to another vertex $v\leq c$ by a path of covers, using
edges in either direction.
If the largest vertex $z$ of the path exceeds $v$, its neighbours $i<j$ give the
flip $jz\mapsto ij\ [iz]$ of Figure~\ref{fig:lastCliqueFlips}(a), which shortens the
path; replacing an edge which is no longer a cover by a path of covers inside its
interval, the largest vertex decreases.
We end with a path inside $[u,v]$, and these flips keep all pairs joined inside
$[1,c]$.
Every natural edge with an endpoint in $(c,n)$ lies inside $an$, so $\theta[L]$
is now connected.

\smallskip\noindent\textbf{Step 2: changing $\theta[L]$.}
The flips $vn\mapsto av\ [an]$ of Figure~\ref{fig:lastCliqueFlips}(b), for
$vn\in\theta$ with $c<v<n$ in decreasing order of $v$, are flips as in stage~(1)
of the proof of Lemma~\ref{lem:crossingSlide}, and they change neither the edges
inside $L$ nor the cover tests of other corners.
Afterwards a path leaving $L$ cannot return to $L$, so every path of flips in
$\Gamma[L]$ lifts, with all edges outside $L$ fixed.
By assumption we reach any connected orientation of $\Gamma[L]$ in $\Gplus$ with
the same number of edges, and then we undo the flips of
Figure~\ref{fig:lastCliqueFlips}(b) in increasing order of $v$.

\smallskip\noindent\textbf{Step 3: a fixed tree.}
In the set $\Gplus$ of $\Gamma[L]$, the corners and the edges $vn$ with $a<v\leq c$
form a forest, since every vertex has at most one outgoing edge among them.
It extends to a spanning tree $T'$ of $\Gamma[L]$ inside $\Gplus$.
Inside $[a,n]$, the tree $T'$ consists of edges to $n$, and $ba\notin T'$ since
$bc,cn,na$ already join $b$ and $a$.
By Step~2 we may assume that $T'\subseteq\theta$.
Then $[a,n]$ is one component: the vertices in $(c,n)$ are joined to $L$ by
unchanged edges, and those of $L\cap[a,n]$ are joined to $n$ by $T'$.
Lemma~\ref{lem:complete} makes it the star of Figure~\ref{fig:lastCliqueFlips}(c)
together with edges avoiding $a$, and we obtain an orientation containing
$T=T'\cup\{vn:c<v<n\}$, see Figure~\ref{fig:lastCliqueTree}.

\begin{figure}[!ht]
\centering
\begin{gather*}
 \text{(a)}\quad \EdgePic[2em]{3}{i,j,z}{1/3/c,2/3/b}\longrightarrow\EdgePic[2em]{3}{i,j,z}{1/3/c,1/2/b}
 \qquad\qquad
 \text{(c)}\quad \EdgePic[1.6em]{5}{a,c,v,w,n}{1/5/c,2/5/b,3/5/b,4/5/b}\\[1ex]
 \text{(b)}\quad \EdgePic[2em]{4}{a,c,v,n}{1/4/c,3/4/b}\longrightarrow\EdgePic[2em]{4}{a,c,v,n}{1/4/c,1/3/b}\\[1ex]
 \text{(d)}\quad \EdgePic[2em]{4}{b,a,c,n}{1/3/c,2/4/c,1/2/b}\longrightarrow\EdgePic[2em]{4}{b,a,c,n}{1/3/c,2/4/c,3/4/b}
\end{gather*}
\caption{The flips in the proof of Lemma~\ref{lem:lastClique}, with retained
covers in gray above the line and moving edges below.
(a) Shortening a path of covers through its largest vertex $z$ in Step~1.
(b) Moving an edge $vn$ with $c<v<n$ off $n$ in Step~2.
(c) The star at $n$ inside $[a,n]$ in Step~3.
(d) Exchanging $ba$ for $cn$ in Step~4, by Lemma~\ref{lem:crossingSlide} for the
corners $bc$ and $an$.}
\label{fig:lastCliqueFlips}
\end{figure}

\begin{figure}[!ht]
\centering
\LLTpic[cell=0.9em]{nnnnddenndeeeee}{\LLTasc{3/5,5/9,6/9}\LLTmark{2/4}{$\circ$}}\qquad\qquad
\LLTpic[cell=0.9em]{nnnnddenndeeeee}{\LLTasc{3/5,5/9,6/9,7/9,8/9}\LLTmark{2/4}{$\circ$}}
\caption{The trees $T'$ (left) and $T$ (right) for the path
$\mathtt{nnnnddenndeeeee}$, with corners $15$, $26$ and $49$, so that $b=2$, $a=4$,
$c=6$, $n=9$ and $L=[1,6]\cup\{9\}$.
The diagonal steps cross the corners, which are tree edges, and the arrows
are the remaining tree edges; $T$ adds the leaves $7$ and $8$ at $9$.
The position $ba=24$, marked $\circ$, lies in neither tree.}
\label{fig:lastCliqueTree}
\end{figure}

\smallskip\noindent\textbf{Step 4: moving the remaining edges.}
Let $M_1$ be the positions of $\Gplus\setminus T$ inside $L$, and let $M_2$ be the
set of pairs $uv$ with $a<u<v<n$, together with $ba$.
Together they are all positions of $\Gplus\setminus T$, and $ba\in M_1\cap M_2$.
Step~2 exchanges an occupied position with a vacant one in $M_1$.
Similarly, Lemma~\ref{lem:complete} on the component $[a,n]$ gives such an
exchange in $M_2\setminus\{ba\}$.
Both constructions restore all other edges.
Suppose $ba\in\theta$ and $x\in M_2\setminus\{ba\}$ is vacant.
Let $d\neq c$ be an endpoint of $x$, and let $y$ be the edge joining $c$ and $d$.
If $y\in\theta$, the component stays connected when we replace $cn$ by $x$, so
Lemma~\ref{lem:complete} performs this replacement.
Then Lemma~\ref{lem:crossingSlide} for the corners $bc$ and $an$ exchanges $ba$
for $cn$, see Figure~\ref{fig:lastCliqueFlips}(d),
and we obtain $\theta-ba+x$.
If $y\notin\theta$, the same construction with $y$ in place of $x$ gives
$\theta-ba+y$, and then we exchange $y$ and $x$.
Reversing these paths exchanges in the other direction.
Finally, a position of $M_1\setminus M_2$ and one of $M_2\setminus M_1$ are
exchanged by routing through $ba$, as in Step~3 of Lemma~\ref{lem:complete}.
Thus all orientations containing $T$ with the same number of edges lie in one flip
class, and with Lemma~\ref{lem:preparation} this proves the lemma.
\end{proof}

Reversing the order of the vertices maps flips to flips, since it reverses the
direction of all edges and preserves covers.
Hence Lemma~\ref{lem:lastClique} also holds for the first two corners, with the
graph induced on $\{1\}\cup[b',n]$ in place of $\Gamma[L]$ when the first two
corners are $1a'$ and $b'c'$.

\subsection{The first vertex}

Suppose that the first two corners are $1a$ and $2b$, with $2<a<b$.
Let $\Gamma'$ be the graph induced on $2,\dotsc,n$.
It has the corners of $\Gamma$ other than $1a$, and the same values of $r$ on
$2,\dotsc,n$.
Hence $\Gplus$ consists of $1a$, the set $(\Gamma')^{+}$ for $\Gamma'$, and the
edges $1v$ for $v$ in
\[
 J=\{v:1<v<a,\ r(a)<r(v)\}.
\]
For $v\in J$, let $vh_v$ be the corner leaving $v$.
Then $h_v>a$, since a corner starting in $(1,a)$ does not end inside $[1,a]$,
and $ah_v$ is a normal edge of $(\Gamma')^{+}$.
Lemma~\ref{lem:crossingSlide} for the corners $1a$ and $vh_v$ exchanges $1v$ and
$ah_v$:
\[
 \EdgePic{4}{1,v,a,h_v}{1/3/s,2/4/s,1/2/b}
 \;\longleftrightarrow\;
 \EdgePic{4}{1,v,a,h_v}{1/3/s,2/4/s,3/4/b}\,.
\]

\begin{lemma}\label{lem:firstVertex}
Suppose that Theorem~\ref{thm:connectivity} holds for $\Gamma'$, and that
$(\Gamma')^{+}\setminus\{ah_v\}$ is connected for every $v\in J$.
Then Theorem~\ref{thm:connectivity} holds for $\Gamma$.
\end{lemma}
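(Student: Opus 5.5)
We follow the pattern of Lemma~\ref{lem:lastClique}. Let $\theta,\theta'$ be connected orientations of $\Gamma$ with the same number of ascending edges in which every corner is a cover. By Lemma~\ref{lem:preparation} we may assume $\theta,\theta'\subseteq\Gplus$. Then the edges at $1$ are the corner $1a$ together with some edges $1v$ with $v\in J$. The plan is to first push all edges $1v$ into $(\Gamma')^{+}$, then apply the hypothesis for $\Gamma'$, and finally redistribute edges between the positions $1v$ and $ah_v$.

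\textbf{Step 1: clearing vertex $1$.} For each $1v\in\theta$ with $v\in J$ and $ah_v\notin\theta$, apply Lemma~\ref{lem:crossingSlide} to the corners $1a$ and $vh_v$. This exchanges $1v$ for $ah_v$ and restores all other edges. If $ah_v\in\theta$ already, first vacate it by an exchange inside $\Gamma'$ (Step~2 below). This requires that the restriction to $[2,n]$ stays connected when $ah_v$ is removed, and that is exactly what the hypothesis on $(\Gamma')^{+}\setminus\{ah_v\}$ guarantees. After this step, $1$ is joined only by $1a$, so $\theta[2,n]$ is connected, since $\theta$ was connected and $1$ is now a leaf.

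\textbf{Step 2: lifting flips of $\Gamma'$.} When $1a$ is the only edge at $1$, a directed path between vertices of $[2,n]$ never uses $1$, because no edge enters $1$. So the cover tests in $\Gamma$ and $\Gamma'$ agree on $[2,n]$, and $1a$ remains a cover: $a$ is reached from $1$ only directly. Hence every flip of $\Gamma'$ is a flip of $\Gamma$. By Theorem~\ref{thm:connectivity} for $\Gamma'$, any two connected orientations of $\Gamma'$ in which all corners are covers and which have the same number of edges are joined by flips. This connects $\theta$ and $\theta'$ once both have been brought to the form $\{1a\}\cup\eta$ with $\eta$ connected.

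\textbf{Step 3: the edge counts agree.} After Step~1, $\theta$ and $\theta'$ have the same number of edges inside $[2,n]$, so Step~2 finishes the proof. The main obstacle is Step~1 when $ah_v$ is occupied. We must produce an orientation of $\Gamma'$ with the same number of edges, still connected, containing $\theta[2,n]$-equivalent data but with $ah_v$ vacant. The first attempt is to use Theorem~\ref{thm:connectivity} for $\Gamma'$ to move the edge at $ah_v$ to a vacant position of $(\Gamma')^{+}$ while keeping a spanning tree of $(\Gamma')^{+}\setminus\{ah_v\}$ present.

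If instead all positions of $(\Gamma')^{+}$ are full, there is no vacant position. In that case we count edges: $\theta$ then consists of all of $(\Gamma')^{+}$ plus some set of $1v$'s, and $\theta'$ is of the same kind. We then exchange between different $1v$ and $1v'$ by routing through $ah_v$, sliding $1v\to ah_v$, rearranging inside $\Gamma'$, and sliding $ah_{v'}\to 1v'$, as in Step~4 of Lemma~\ref{lem:lastClique}. A potential difficulty is that Lemma~\ref{lem:crossingSlide} needs $vh_v$ to be a cover throughout. This holds for $\theta\subseteq\Gplus$ by the remark before Lemma~\ref{lem:preparation}.
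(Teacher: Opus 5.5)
Your plan uses the same ingredients as the paper: crossing slides for the corners $1a$ and $vh_v$, flips of $\Gamma'$ lifted with the edges at $1$ fixed, and connectivity of $(\Gamma')^{+}\setminus\{ah_v\}$. However, two steps fail as written.

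\textbf{Step~1 needs connectivity first.} You vacate an occupied position $ah_v$ by applying Theorem~\ref{thm:connectivity} to $\Gamma'$. That theorem needs the \emph{current} restriction of $\theta$ to $[2,n]$ to be connected. The hypothesis on $(\Gamma')^{+}\setminus\{ah_v\}$ only supplies a connected target. Since $\theta$ may be attached to $1$ by several edges $1v$, this restriction need not be connected at the start. The missing observation is the paper's first step:
\begin{itemize}
\item if the restriction is disconnected, some $1v\in\theta$ ends in a component not containing $a$;
\item the corner $vh_v$ puts $h_v$ in the same component, so $ah_v\notin\theta$;
\item hence a slide $1v\to ah_v$ into a vacant position is available, and it reduces the number of edges at $1$.
\end{itemize}
So you must perform all slides into vacant positions first. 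Only then is the restriction connected, and the vacating step becomes legitimate as long as fewer than $m=|(\Gamma')^{+}|$ edges lie inside $[2,n]$. Your Step~2 is stated only for the case where $1a$ is the only edge at $1$, but you also use it while edges $1w$ are present. That is harmless, since no directed path between vertices of $[2,n]$ passes through $1$.

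\textbf{The saturated case routes in the wrong order.} Here every position of $(\Gamma')^{+}$ is occupied, in particular $ah_v$. So sliding $1v\to ah_v$ first is not an instance of Lemma~\ref{lem:crossingSlide}, which requires $ah_v\notin\theta$; it would also put $m+1$ edges inside $[2,n]$. Reverse the order. For $v\in I$ and $v'\in J\setminus I$:
\begin{enumerate}
\item slide $ah_{v'}\to 1v'$;
\item use the theorem for $\Gamma'$ to pass from $(\Gamma')^{+}\setminus\{ah_{v'}\}$ to $(\Gamma')^{+}\setminus\{ah_v\}$, both connected by hypothesis and with all corners as covers;
\item slide $1v\to ah_v$.
\end{enumerate}
You should also explain why $\theta$ and $\theta'$ reach the same normal form. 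With $s$ edges, the process ends with no edge $1v$ if $s-1\leq m$, and it saturates with $|I|=s-1-m$ otherwise.

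With these repairs your argument is a valid alternative to the paper's. Your normalization, to $I=\emptyset$ or to a saturated $(\Gamma')^{+}$, replaces the paper's bookkeeping. The paper instead fixes $I$, shows that the classes for $I$ and $I\setminus\{v\}$ agree whenever both satisfy~\eqref{eq:initialBand}, and then rules out a band of a single intermediate cardinality using $m\geq n-1$. Your corrected exchange, passing through $|I|+1$ edges at $1$, is exactly the paper's ``insert first'' move.
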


\begin{proof}
Let $\theta\subseteq\Gplus$ be connected.
We first make the restriction to $[2,n]$ connected.
Once this is done, the set $I$ of edges at $1$ records the remaining choice:
the inductive hypothesis connects orientations with fixed $I$, and crossing
slides connect the possible sets $I$.
If the edges of $\theta$ inside $[2,n]$ do not form a connected graph, let $B$ be the
component containing $a$.
Some edge $1v\in\theta$ ends in another component, and then $v\in J$, the vertices
$v$ and $h_v$ lie in the same component, and $ah_v\notin\theta$.
Lemma~\ref{lem:crossingSlide} replaces $1v$ by $ah_v$, and the number of edges at $1$
decreases.
Hence we may assume that the edges inside $[2,n]$ form a connected graph.

Fix the set $I\subseteq J$ of vertices $v$ with $1v\in\theta$.
Paths between vertices of $[2,n]$ avoid $1$, so flips of $\Gamma'$ lift with the
edges at $1$ fixed, and by assumption all such orientations with the same $I$
and the same number of edges lie in one flip class.

Let $m$ be the number of edges of $(\Gamma')^{+}$, and let $s$ be the number of
edges of $\theta$.
The corners of $\Gamma'$ form a forest, which extends to a spanning tree of
$(\Gamma')^{+}$, so connected orientations of $\Gamma'$ in $(\Gamma')^{+}$ exist
with any number of edges between $n-2$ and $m$.
Hence the possible sets are exactly the $I\subseteq J$ with
\begin{equation}\label{eq:initialBand}
 \max(0,s-1-m)\leq|I|\leq\min(|J|,s-1-(n-2)).
\end{equation}
Indeed, the edges incident to $1$ are $1a$ and the edges $1v$ with $v\in I$,
so the restriction to $[2,n]$ has $s-1-|I|$ edges.
The inequalities $n-2\leq s-1-|I|\leq m$ and $0\leq|I|\leq|J|$ give
the displayed bounds.
Suppose that $I$ and $I\setminus\{v\}$ both satisfy~\eqref{eq:initialBand}.
The corresponding orientations of $\Gamma'$ have $t$ and $t+1$ edges, with
$n-2\leq t<m$.
Since $(\Gamma')^{+}\setminus\{ah_v\}$ is connected, there is a connected
orientation of $\Gamma'$ in $(\Gamma')^{+}$ with $t$ edges avoiding $ah_v$.
Together with the edges $1w$, $w\in I$, Lemma~\ref{lem:crossingSlide} moves it
to an
orientation with the set $I\setminus\{v\}$.
Thus the classes for $I$ and $I\setminus\{v\}$ agree.

Any two sets satisfying~\eqref{eq:initialBand} are joined by such insertions and
deletions: to exchange two elements we delete first if $|I|$ is above its lower
bound, and insert first otherwise.
This fails only if~\eqref{eq:initialBand} allows a single cardinality and more
than one set.
If $J\neq\emptyset$, the connected graph
$(\Gamma')^+\setminus\{ah_v\}$ has $n-1$ vertices and $m-1$ edges for every
$v\in J$, so $m\geq n-1$.
If the lower and upper bounds in~\eqref{eq:initialBand} were both an integer
$k$ with $0<k<|J|$, then
\[
 k=s-1-m=s-1-(n-2),
\]
which would give $m=n-2$, a contradiction.
Thus the only possible single cardinalities are $0$ and $|J|$, each of which
determines $I$ uniquely.
Hence all connected orientations in $\Gplus$ with $s$ edges lie in one flip
class, and Lemma~\ref{lem:preparation} proves the lemma.
\end{proof}

\subsection{Proof of Theorem~\ref{thm:connectivity}}

First let the natural graph be connected without articulations.
We proceed by induction on $n$.
If there is only one corner, the graph is complete, and Lemma~\ref{lem:complete}
applies.
If there are at most two corner chains, Lemma~\ref{lem:twoChains} applies.
Otherwise let $1a$ and $b'c'$ be the first two corners.
Since there are no articulations, consecutive maximal cliques share at least two
vertices, so $1<b'<a<c'$.
If $b'>2$, the reversed form of Lemma~\ref{lem:lastClique} reduces the claim to the
graph induced on $\{1\}\cup[b',n]$, which has fewer vertices and no
articulation.
If $b'=2$, deleting the vertex $1$ removes the corner $1a$ and gives a graph
$\Gamma'$ without articulations, whose maximal cliques start with $[2,c']$.
It has the same number of corner chains as $\Gamma$, at least three, so
Lemma~\ref{lem:removableEdge} for $\Gamma'$ verifies the hypothesis of
Lemma~\ref{lem:firstVertex}, and induction applies to $\Gamma'$.

Finally, let the natural graph consist of the path $1,\dotsc,b$ followed by a
graph without articulations on $[b,n]$.
The path edges are corners, so they lie in every orientation we consider, and
paths between vertices of $[b,n]$ do not use vertices below $b$.
Hence restriction to $[b,n]$ is a bijection between the orientations in question
of $\Gamma$ and of the graph induced on $[b,n]$, compatible with flips.
This completes the proof of Theorem~\ref{thm:connectivity}, and thereby of
Theorem~\ref{thm:main} for ordinary graphs.

\section{Flip classes of circular graphs}\label{sec:circular}

In this section we prove Theorem~\ref{thm:main} for circular graphs, by
reduction to the ordinary case.
The idea is to expand $\Kv(\theta)$ and $\Ev(\theta)$, for every admissible
orientation, as the same signed sum over \emph{balanced} sub-orientations, in
which the lengths of the edges are differences of a potential.
After summing these expansions over a fixed flip class, the resulting
coefficients are constant on flip classes of suborientations, so we may group
the expansion by those classes.
Finally, the orientations of a balanced flip class can be drawn on a line, where
they form a union of ordinary flip classes.

Let $\Gamma$ be a circular decorated unit-interval graph.
A strict edge is a corner, so it is never a short edge of a flip, and it is
present in every orientation.
Hence forgetting the strict markings, and including the strict edges in every
orientation, identifies the flip classes of $\Gamma$ with flip classes of the
undecorated graph, with the same graphs $\theta\cup S$.
We therefore assume that $S=\emptyset$, and identify an orientation $\theta$ with
the edge set of $\theta\cup S$.
Throughout, orientations are admissible, and for a set of edges $Y$ we write
$c(Y)$ for the number of connected components of the undirected graph on
$\setZ/n\setZ$ with edges $Y$, isolated vertices included.
Every edge $e$ has a \defin{length} $\ell(e)\in\{1,\dotsc,n-1\}$.

We write $\Kv(\theta)=\Kp_{\theta}$, and $h_\theta$ for the whole map
$z\mapsto\widetilde{\hrvop}(\theta,z)$ on $\setZ$.
We work in the free abelian group on such maps, with basis elements $[h_\theta]$.
The linear map $[h]\mapsto\elementaryE_\lambda$, where $\lambda$ is the partition
of the fibre sizes of $h$ modulo $n$, sends $[h_\theta]$ to $\Ev(\theta)$.

\subsection{Balanced orientations}
The signed expansion is indexed by subsets of an orientation of the following
kind.

\begin{definition}
A set $F$ of edges is \defin{balanced} if on each connected component of the
undirected graph $(\setZ/n\setZ,F)$ there is a function $p$ to $\setZ$ with
$\ell(e)=p(v)-p(u)$ for every edge $e$ from $u$ to $v$ in $F$.
\end{definition}

Equivalently, the lifted graph of $F$ is a disjoint union of translates of a
graph on finitely many integers.
Let $F\subseteq\theta$ be balanced, with such a potential $p$.
Contract the connected components of $F$, keeping the edges of $\theta\setminus F$
as individual edges, possibly loops or parallel edges, and give the edge $e$ from
$u$ to $v$ the \defin{gain} $\delta(e)=\ell(e)-(p(v)-p(u))$.
Changing $p$ by a constant on a component does not change the gain of a closed
walk.

\begin{definition}\label{def:coefficient}
Let $F\subseteq\theta$ be balanced, with contracted graph $Q$.
If every directed cycle of $Q$ has negative gain and every edge of $Q$ lies on a
directed cycle, let $\defin{\chi(\theta,F)}=(-1)^{c(F)-c(\theta)}$.
Otherwise, and for $F$ not balanced or not contained in $\theta$, let
$\chi(\theta,F)=0$.
\end{definition}

If $\theta$ itself is balanced, then $\chi(\theta,F)=[F=\theta]$.
Indeed, restricting a potential of $\theta$ to $F\subsetneq\theta$ gives every edge
of $\theta\setminus F$ gain zero, so any directed cycle containing one of them,
a loop included, has gain zero.

\begin{example}\label{ex:deletion}
Let $\avec=(2,2,2)$, so that $\Gamma$ is the complete circular graph on three
vertices, and let $\theta$ consist of the edges $1\to2$, $1\to3$ and $3\to2$, of
lengths $1$, $2$ and $2$.
It is not balanced: the path $1\to3\to2$ has length $4$, while $1\to2$ has length
$1$.
Lemma~\ref{lem:deletion} below, applied to this cycle, gives
$\Kv(\theta)=\Kv(F)$ and $[h_\theta]=[h_F]$ for $F=\{1\to3,3\to2\}$.
The set $F$ is balanced, and one component of its lift is the path
$1\to3\to5$.
Whether or not $1\to2$ is present, the highest reachable lifts of $1$, $3$ and
$2$ are $5$, $5$ and $2$, respectively.
They have the same residue modulo $3$, so $\Ev(\theta)=\Ev(F)=\elementaryE_3$.
Moreover, $F$ gives the chain $1\to3\to2$, and the extra edge $1\to2$ imposes
no additional condition on colorings.
Hence $\Kv(\theta)=\Kv(F)=\elementaryE_3$ as well.
In the lift below, the integer $z$ lies over the residue of $z$ modulo $3$, the
solid edges are the lift of $F$ starting in $\{1,2,3\}$, and the dashed edge is
the lift of the deleted edge $1\to2$:
\[
\begin{tikzpicture}[x=3.2em,y=3em]
\draw[gray] (0.4,0)--(5.6,0);
\foreach \z in {1,...,5}{\node[vtx,fill=white] (z\z) at (\z,0) {$\z$};}
\draw[sel,gray,densely dashed] (z1) to[out=-60,in=-120,looseness=1.8] (z2);
\draw[sel] (z1) to[bend left=45] (z3);
\draw[sel] (z3) to[bend left=45] (z5);
\end{tikzpicture}
\]
\end{example}

\subsection{Deleting edges of a cycle}

\begin{lemma}\label{lem:deletion}
Let $\theta$ be admissible, and let $Z$ be a cycle of the undirected graph of
$\theta$, traversed so that its forward edges form a nonempty set $Z^+$, its
backward edges form a set $Z^-$, and
\[
 \sum_{e\in Z^+}\ell(e)>\sum_{e\in Z^-}\ell(e).
\]
Let $B=\theta\setminus Z^-$.
Then
\[
 \sum_{J\subseteq Z^-}(-1)^{|J|}\Kv(B\cup J)=0
 \qquad\text{and}\qquad
 \sum_{J\subseteq Z^-}(-1)^{|J|}[h_{B\cup J}]=0.
\]
\end{lemma}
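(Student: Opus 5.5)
The plan is to prove both identities by the same device as in Lemma~\ref{lem:fourTerm}, namely by expanding the alternating sum as a product over the edges of $Z^-$.

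\textbf{Preliminary: admissibility.} First, $Z^-\neq\emptyset$. If every edge of $Z$ were forward, $Z$ would be a directed cycle in the projection of $\theta$, contradicting admissibility. Every $B\cup J$ is a subset of $\theta$, hence admissible, so all terms are defined.

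\textbf{The $\Kv$ identity.} I would fix a coloring $\kappa$ of $\setZ/n\setZ$ that increases along every edge of $B$ and sum the brackets over $J$:
\[
 \sum_{J\subseteq Z^-}(-1)^{|J|}\,[\kappa\text{ increases on }J]
 =\prod_{e\in Z^-}\bigl(1-[\kappa\text{ increases on }e]\bigr).
\]
This product is the indicator that $\kappa$ weakly decreases along every edge of $Z^-$ in its own direction. Now traverse $Z$ in the given direction:
\begin{itemize}
\item on forward edges (in $Z^+\subseteq B$) the color strictly increases;
\item on backward edges the traversal runs from head to tail, so the color weakly increases.
\end{itemize}
Since $Z^+\neq\emptyset$, the color would strictly increase around a closed walk, which is absurd. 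Hence every product vanishes. Summing $x_\kappa$ over all such $\kappa$ gives the first identity. The length hypothesis is not needed here.

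\textbf{The $[h]$ identity: finding an ineffective edge.} Here the lengths matter, and I would work on the lift with Lemma~\ref{lem:ineffective}(i). The key claim is the following.

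For every $J\subseteq Z^-$ there is an edge $xy\in Z^-\setminus J$, directed from $x$ to $y$, with $h_{B\cup J}(y)\leq h_{B\cup J}(x)$ on a lift. Such an edge does not change $h$ when added.

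To prove it, lift the traversal of $Z$ starting at an integer $z$. It ends at $z+kn$, where
\[
 kn=\sum_{Z^+}\ell-\sum_{Z^-}\ell>0 ,
\]
so $k\geq1$ and $h(z+kn)=h(z)+kn>h(z)$. Along edges of $B\cup J$, traversed in their direction, $h=h_{B\cup J}$ is weakly decreasing. This covers all forward edges and the backward edges in $J$, traversed head to tail, which go against their direction and so do not obstruct the argument only when they lie outside $J$. Therefore some backward edge outside $J$ must carry a strict increase of $h$ along the traversal, that is, from its head $y$ to its tail $x$. This gives $h(y)<h(x)$, as required.

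\textbf{The $[h]$ identity: cancelling the terms.} I would turn the claim into cancellation with a sign-reversing involution. Order $Z^-$. For each $J$, let $e(J)$ be the least $e\in Z^-$ that is ineffective for $B\cup(J\setminus\{e\})$, and pair $J$ with $J\triangle\{e(J)\}$. Paired terms have equal $[h]$ and opposite signs. Existence of $e(J)$ follows from the claim applied to $J\setminus\{e\}$, once it is checked that some $e$ works.

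\textbf{Main obstacle.} The difficulty is consistency: one needs $e(J\triangle\{e\})=e(J)$, meaning that toggling $e$ does not change whether a smaller $e'$ is ineffective. I would try first to show that when $e$ is ineffective for $B\cup(J\setminus\{e\})$, the map $h$ is literally unchanged, so that the tests for smaller edges see the same $h$. This requires comparing $J\setminus\{e'\}$ with and without $e$, which is not immediate.

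If that fails, I would instead argue by induction on $|Z^-|$:
\begin{enumerate}[(1)]
\item Split the sum according to one edge $e_0\in Z^-$ that is ineffective for $B$ itself; such an edge exists by the claim with $J=\emptyset$.
\item Show that $e_0$ remains ineffective after adding any subset of the other edges of $Z^-$, using Lemma~\ref{lem:ineffective}(i) together with the monotonicity of $h$ under adding edges, which can only raise $h$ at tails.
\item Conclude that the terms for $J$ and $J\cup\{e_0\}$ cancel in pairs.
\end{enumerate}
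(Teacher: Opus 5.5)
Your argument for the $\Kv$ identity is correct. It is the paper's telescoping argument, written as a product over $Z^-$ instead of an involution.

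The gap is in the $[h]$ identity. Your key claim is false as stated: for $J=Z^-$ there is no edge of $Z^-\setminus J$ at all. The derivation breaks because an edge $xy\in J$ lies in $B\cup J$. Along the traversal from $y$ to $x$ the value of $h$ therefore weakly \emph{increases}, so the edges of $J$ can carry the whole increase $kn$.

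The claim fails for proper subsets as well. Take the complete circular graph with $n=10$, and let $\theta$ consist of $0\to8$, $3\to8$, $3\to9$, $5\to9$, $5\to2$, $0\to2$, $8\to5$, $9\to7$, $7\to4$. Traverse the cycle $0,8,3,9,5,2$, so that $Z^-=\{3\to8,5\to9,0\to2\}$ and $\sum_{Z^+}\ell=21>11=\sum_{Z^-}\ell$. For $J=\{3\to8,0\to2\}$ the only edge outside $J$ is $5\to9$. For $z\equiv5$ we get $h(z)=z+7<z+19=h(z+4)$, so this edge is not ineffective.

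Your fallback breaks in the same example. The edge $3\to8$ is ineffective for $B$, since $h_B(z)=z+21\geq z+19=h_B(z+5)$ for $z\equiv3$. After adding $5\to9$, however, $h(z+5)=z+31>z+21=h(z)$. The reason is that from the head $8$ the new edge leads to the lift $z+16$ of $9$, one period above the lift $z+6$ that $z$ reaches directly. So step~(2) fails. Adding an edge raises $h$ at every vertex from which its tail is reachable, including heads of other edges of $Z^-$, and no monotonicity argument controls this.

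What your telescoping actually gives, for every $J$, is an edge $xy\in Z^-$ with $h_{B\cup J}(\tilde x)>h_{B\cup J}(\tilde x+\ell(xy))$. This edge may lie in $J$, and the inequality is \emph{strict}.

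Strictness is the missing idea, because it gives invariance in both directions:
\begin{itemize}
\item If $xy\notin J$, adding it leaves $h$ unchanged by Lemma~\ref{lem:ineffective}(i).
\item If $xy\in J$, removing it also leaves $h$ unchanged. A path through a lift $\tilde x'\to\tilde y'$ ends at most at $h(\tilde y')<h(\tilde x')$, so it is never maximizing.
\end{itemize}
Hence the set of strictly ineffective edges of $Z^-$ depends only on $h$, and it is nonempty for every $J$. Toggling the first of them is then a well-defined sign-reversing involution preserving $h$. This is the paper's proof, and it removes the consistency problem you identified.
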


\begin{figure}[!ht]
\centering
\begin{tikzpicture}[x=3em,y=3em]
\node[vtx,fill=white] (a) at (90:1) {$1$};
\node[vtx,fill=white] (b) at (210:1) {$2$};
\node[vtx,fill=white] (c) at (330:1) {$3$};
\draw[sel] (a) to[bend left=25] node[font=\scriptsize,above right] {$2$} (c);
\draw[sel] (c) to[bend left=25] node[font=\scriptsize,below] {$2$} (b);
\draw[sel,densely dashed] (a) to[bend right=25] node[font=\scriptsize,above left] {$1$} (b);
\end{tikzpicture}
\caption{The cycle of Example~\ref{ex:deletion}, traversed $1\to3\to2\to1$, with
the edge lengths.
The solid edges form $Z^+$, of total length $4$, and the dashed edge $1\to2$
forms $Z^-$, of length $1$; Lemma~\ref{lem:deletion} removes it.}
\label{fig:deletionCycle}
\end{figure}

\begin{proof}
Let $\kappa$ be a coloring which increases along the edges of $B$.
The values of $\kappa$ telescope around $Z$, so
$\sum_{Z^+}(\kappa(v)-\kappa(u))=\sum_{Z^-}(\kappa(v)-\kappa(u))$, the sums ranging over
the edges from $u$ to $v$.
The left side is positive, so some edge of $Z^-$ increases.
Toggling the first such edge, in a fixed order of $Z^-$, is a sign-reversing
involution on the sets $J$ for which $\kappa$ increases along $B\cup J$, which
proves the first identity.

For the second, fix $J$ and let $h=h_{B\cup J}$.
The function $t(z)=h(z)-z$ is periodic, so it is a function on residues.
For an edge from $u$ to $v$ in $B\cup J$, its lift ends at $u+\ell$ and
$h(u)\geq h(u+\ell)$, so $t(u)-t(v)\geq\ell$.
Telescoping $t$ around $Z$ gives
\[
 \sum_{Z^-}(t(u)-t(v))=\sum_{Z^+}(t(u)-t(v))\geq\sum_{Z^+}\ell>\sum_{Z^-}\ell.
\]
Hence some edge of $Z^-$ satisfies $t(u)-t(v)>\ell$, that is, $h(u)>h(u+\ell)$ for its
lifts.
Adding this edge does not change $h$, by Lemma~\ref{lem:ineffective}(i).
Removing it does not change $h$ either: a path through a lift of this edge ends
below $h(u+\ell)<h(u)$ and is not maximizing.
Toggling the first edge of $Z^-$ with this property is therefore a sign-reversing
involution which preserves $h$.
\end{proof}

If $\theta$ is not balanced, choose a spanning forest of $\theta$ and a potential
on it.
Some edge outside the forest violates the potential, and together with the path
in the forest between its endpoints it forms a cycle with nonzero signed length.
We traverse this cycle in the direction in which the forward edges are longer.
Then $Z^-\neq\emptyset$, since otherwise $\theta$ would contain a directed cycle.
Hence every set $B\cup J$ with $J\subsetneq Z^-$ in Lemma~\ref{lem:deletion} has fewer
edges than $\theta$.

\subsection{Expansion into balanced orientations}

An \defin{ordered partition} of the vertex set of a directed multigraph $Q$ is
a sequence $\omega=(\omega_1,\dotsc,\omega_k)$ of disjoint nonempty blocks
covering it; it is \defin{compatible} with $Q$ if every edge goes from some
$\omega_i$ to some $\omega_j$ with $i\leq j$.

\begin{lemma}\label{lem:orderedPartitions}
Let $Q$ be a finite directed multigraph, and let $\omega$ range over the ordered
partitions compatible with $Q$, with $k(\omega)$ blocks.
Then $\sum_{\omega}(-1)^{k(\omega)}$ is $(-1)^{c(Q)}$ if every edge of $Q$ lies on
a directed cycle, and $0$ otherwise.
\end{lemma}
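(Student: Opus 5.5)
We reduce the lemma to strongly connected components and a standard alternating count of ordered set partitions. Let $Q_1,\dotsc,Q_m$ be the strongly connected components of $Q$, and let $\bar Q$ be the condensation, which is acyclic. An ordered partition $\omega$ compatible with $Q$ must keep every strongly connected component inside a single block. Indeed, if $x$ and $y$ lie on a common directed cycle, the block indices are weakly increasing along the cycle and return to their start, so they are constant on it.

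\textbf{Step 1: reduction to the condensation.} By the remark above, compatible ordered partitions of $Q$ are in bijection with compatible ordered partitions of $\bar Q$, with the same number of blocks. So it suffices to compute $f(D)\coloneqq\sum_\omega(-1)^{k(\omega)}$ for an acyclic multigraph $D$ on $m$ vertices. Here, if $D$ has an edge, that edge is not on a cycle of $Q$, and we must get $0$. If $D$ has no edges, every edge of $Q$ lies inside a strongly connected component, and we must get $(-1)^m$. In that case $m=c(Q)$, since each component of the underlying graph is then a single strongly connected component.

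\textbf{Step 2: the edgeless case.} Here we need $\sum_{\omega}(-1)^{k(\omega)}=(-1)^m$ over all ordered set partitions of an $m$-set. This is classical and can be shown by an involution or by induction.
\begin{itemize}
\item Induction: removing the block containing a fixed element gives the recursion $f_m=-\sum_{j\geq1}\binom{m-1}{j-1}f_{m-j}$, with $f_0=1$.
\item Involution: compare element $1$ with the next block. If $1$ is alone in its block and a next block exists, merge them. If $1$ shares its block, split $\{1\}$ off immediately before the remainder of that block.
\end{itemize}
The involution pairs every ordered partition except the one where $\{1\}$ is the last block, which is not the full pairing we want. So I will rely on the induction: with $f_j=(-1)^j$, the recursion gives $f_m=-\sum_j\binom{m-1}{j-1}(-1)^{m-j}=(-1)^m$, using $\sum_{i=0}^{m-1}\binom{m-1}{i}(-1)^{i}=0$ for $m\geq2$. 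The case $m=1$ is checked directly.

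\textbf{Step 3: an acyclic $D$ with an edge gives $0$.} This is the main point. I would pick a vertex $x$ that is a source of $D$ and has an outgoing edge. Such an $x$ exists: take a minimal vertex in a linear extension among the tails of edges, then walk backwards to a source. The walk goes from some source $x$ to the edge $e$, so $x$ is not isolated, and since $x$ is a source it has an outgoing edge. Define an involution on compatible ordered partitions. If $x$ is alone in its block $\omega_i$ and $i<k$, merge $\omega_i$ into $\omega_{i+1}$. If $x$ shares its block, split $\{x\}$ off as a new block just before the rest. Splitting preserves compatibility because $x$ is a source, so no edge ends at $x$ from its own block. Merging preserves compatibility trivially. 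These two maps are inverse to each other and change $k$ by one.

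The unpaired partitions are those in which $x$ is alone in the last block. Compatibility then forces every out-neighbour of $x$ into a block of index at least $k$, so into the block $\{x\}$ itself. That is impossible because $D$ has no loops, being acyclic. Hence no partition is unpaired, and the sum is $0$.

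For the loops of $Q$: a loop lies on a directed cycle and stays inside a strongly connected component, so it disappears in the condensation and is consistent with the claim. The only delicate part is verifying that the merge and split maps are mutually inverse with the "last block" fixed points excluded, and that merge is defined only when $i<k$. Otherwise the argument is routine.
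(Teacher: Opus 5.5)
\textbf{Step~2 is wrong as written.} Removing the block that contains a fixed element is not a bijection onto ordered partitions of the remaining $m-j$ elements. Given the other $k'$ blocks, the removed block could have sat in any of $k'+1$ positions. So the recursion $f_m=-\sum_{j\geq1}\binom{m-1}{j-1}f_{m-j}$ is false. Already for $m=2$ it gives $f_2=-(f_1+f_0)=0$, whereas the three ordered partitions of $\{1,2\}$ give $-1+1+1=1$.

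Your evaluation of the recursion is also incorrect. With $i=j-1$, the sum $-\sum_{i=0}^{m-1}\binom{m-1}{i}(-1)^{m-1-i}$ is, up to sign, exactly the alternating binomial sum you cite. It therefore equals $0$ for $m\geq2$, not $(-1)^m$.

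The identity you want is true and classical, and the repair is the involution you discarded. Merging and splitting at the element $1$ leaves unpaired every ordered partition whose last block is $\{1\}$, not just one. These correspond to the ordered partitions of the other $m-1$ elements, with one block fewer. Hence $f_m=-f_{m-1}$, and so $f_m=(-1)^m$. Alternatively, remove the last block to get $f_m=-\sum_{j\geq1}\binom{m}{j}f_{m-j}$. In fact your Step~3 involution, applied at an isolated vertex $x$ of any acyclic $D$, gives $f(D)=-f(D\setminus x)$, so a single involution handles both cases.

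\textbf{Steps~1 and~3 are correct.} With Step~2 repaired, the argument is a valid route that differs from the paper's.

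The paper also contracts strong components and removes loops. It then runs one induction over the last block $U$, a nonempty set closed under taking heads of edges. The sum for $Q'$ is minus the sum of the values for $Q'\setminus U$, and by induction only the sets $U$ containing every head survive. The remaining sum of $(-1)^{|U|}$, over nonempty supersets of the heads, is $-1$ when there are no edges. It vanishes when there is an edge, because there is then a source that is not a head.

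Your sign-reversing involution at a source with an outgoing edge replaces this induction and binomial sum by a direct cancellation with no fixed points. That is shorter and more transparent for the vanishing case. The paper's recursion instead treats the edgeless and non-edgeless cases uniformly, without a separate count of ordered set partitions.
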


\begin{proof}
A strongly connected component lies in one block of a compatible ordered
partition, so we may contract the strong components and remove the loops.
The edges of $Q$ all lie on directed cycles exactly when the resulting acyclic
graph $Q'$ has no edges, and in that case $c(Q)$ is the number of vertices of $Q'$.
We show by induction that the sum for $Q'$ is $(-1)^{|Q'|}$ if $Q'$ has no edges and
$0$ otherwise.
The last block $U$ of a compatible ordered partition is a nonempty set closed
under taking heads of edges, so the sum for $Q'$ is minus the sum over such $U$ of
the sums for $Q'\setminus U$.
By induction only the sets $U$ containing every head of an edge contribute, and
the sum becomes $-(-1)^{|Q'|}\sum_U(-1)^{|U|}$ over the nonempty sets $U$
containing all heads.
If $Q'$ has an edge, it has a source which is not a head, and the inner sum
vanishes.
If $Q'$ is nonempty and has no edges, the inner sum is $-1$, so the full
expression is $(-1)^{|Q'|}$.
\end{proof}

\begin{proposition}\label{prop:balancedExpansion}
For every admissible orientation $\theta$,
\[
 \Kv(\theta)=\sum_{F\subseteq\theta}\chi(\theta,F)\,\Kv(F)
 \qquad\text{and}\qquad
 [h_\theta]=\sum_{F\subseteq\theta}\chi(\theta,F)\,[h_F].
\]
In particular $\Ev(\theta)=\sum_F\chi(\theta,F)\,\Ev(F)$.
\end{proposition}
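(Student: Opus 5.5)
Both identities have the same shape, so I will prove them together by strong induction on $|\theta|$. Write $\Phi$ for either $\Kv$ or $[h_{\cdot}]$. The goal is $\Phi(\theta)=\sum_F\chi(\theta,F)\Phi(F)$. The last claim about $\Ev$ then follows by applying the linear map $[h]\mapsto\elementaryE_\lambda$.

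\textbf{Balanced base case.} If $\theta$ is balanced, then $\chi(\theta,F)=[F=\theta]$, as noted after Definition~\ref{def:coefficient}, and there is nothing to prove.

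\textbf{Inductive step.} If $\theta$ is not balanced, pick the cycle $Z$ as in the remark after Lemma~\ref{lem:deletion}, with $Z^-\neq\emptyset$. That lemma gives
\[
\Phi(\theta)=-\sum_{J\subsetneq Z^-}(-1)^{|J|-|Z^-|}\Phi(B\cup J).
\]
Every $B\cup J$ on the right is a proper subset of $\theta$, hence admissible. By induction,
\[
\Phi(\theta)=\sum_F\Bigl(\sum_{J\subsetneq Z^-}(-1)^{|Z^-|-|J|+1}\chi(B\cup J,F)\Bigr)\Phi(F).
\]
It therefore suffices to prove the purely combinatorial identity
\[
\chi(\theta,F)=\sum_{J\subsetneq Z^-}(-1)^{|Z^-|-|J|+1}\chi(B\cup J,F),
\]
or equivalently $\sum_{J\subseteq Z^-}(-1)^{|J|}\chi(B\cup J,F)=0$ for every balanced $F$.

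\textbf{Reformulating $\chi$.} To prove this I would rewrite $\chi$ using Lemma~\ref{lem:orderedPartitions}. Fix a balanced $F$ with its potential $p$. For $F\subseteq\theta$, the contracted graph $Q$ has vertices the components of $F$ and edges $\theta\setminus F$ with gains. The condition ``every edge lies on a directed cycle'' is exactly what Lemma~\ref{lem:orderedPartitions} detects: the quantity $(-1)^{c(F)}\sum_\omega(-1)^{k(\omega)}$ over ordered partitions compatible with $Q$ equals $(-1)^{c(F)-c(\theta)}$ in the good case, using $c(Q)=c(\theta)$, and $0$ otherwise. The negative-gain condition should also be encodable as a sum over compatible data. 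I expect to do this by choosing for each component a shift of its potential. Then ``all directed cycles have negative gain'' becomes the existence of a potential adjustment making all edges of $Q$ have gain $\le 0$ relative to an ordering. More precisely, I would sum over functions $\omega$ from components to $\setZ$ modulo a global shift, requiring $\delta(e)+\omega(v)-\omega(u)$ to have a sign pattern compatible with an ordered partition. The aim is to get $\chi(\theta,F)=\sum_{\omega}(-1)^{k(\omega)+c(F)}\prod_{e\in\theta\setminus F}\epsilon_\omega(e)$, where $\epsilon_\omega(e)\in\{0,1\}$ depends only on $e$, $F$ and $\omega$.

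\textbf{Cancellation.} With $\chi$ in this multiplicative-in-edges form, the alternating sum over $J\subseteq Z^-$ factors as $\prod_{e\in Z^-\setminus F}(1-\epsilon_\omega(e))$, times terms independent of $J$, together with conditions on $Z^-\cap F$. I then need to show that for each $\omega$ surviving the $Z^+$ conditions, some $e\in Z^-$ has $\epsilon_\omega(e)=1$. This is the telescoping argument of Lemma~\ref{lem:deletion} transplanted to adjusted gains. Around $Z$ the adjusted lengths telescope, the forward edges are longer in total, and each forward edge present has adjusted gain bounded by the ordering. Hence some backward edge must be ``ineffective'' in the sense encoded by $\epsilon$. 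Edges of $Z^-$ lying in $F$ have zero gain, and need the same treatment as in the remark after Definition~\ref{def:coefficient}.

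\textbf{Main obstacle.} The hard step is finding the right edge-local encoding of the negative-gain-cycle condition, so that Lemma~\ref{lem:orderedPartitions} and the telescoping of Lemma~\ref{lem:deletion} mesh. If that fails, my fallback is a direct sign-reversing involution on $J$ for each fixed $F$: toggle the first edge of $Z^-\setminus F$ whose presence does not affect the cycle conditions on $Q$. To justify the toggle, I would show, via the same inequality $\sum_{Z^+}>\sum_{Z^-}$, that such an edge exists whenever $\chi(B\cup J,F)\neq0$.
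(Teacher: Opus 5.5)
\textbf{There is a genuine gap.} Your reduction matches the paper: induction on $|\theta|$, the balanced base case, Lemma~\ref{lem:deletion} applied to the cycle $Z$, and the reduction to $\sum_{J\subseteq Z^-}(-1)^{|J|}\chi(B\cup J,F)=0$ for each balanced $F\subseteq\theta$. The gap is in proving this identity.

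Your main route needs an edge-local encoding of ``every directed cycle of $Q$ has negative gain'', and you never construct one. As sketched, a sum over all integer-valued shifts of potentials is not even finite.

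Your fallback is false. Toggling a single edge of $Z^-\setminus F$ need not preserve the condition that every edge of $Q(J)$ lies on a directed cycle. Moreover, the factor $(-1)^{c(F)-c(B\cup J)}$ in $\chi$ can carry the cancellation instead of $(-1)^{|J|}$. Here is a concrete instance.
\begin{itemize}
\item Take $n=6$, $\avec=(5,\dotsc,5)$ and the admissible orientation $\theta=\{1\to0,0\to5,5\to4,0\to3,3\to4,1\to2,2\to3\}$, with lengths $5,5,5,3,1,1,1$.
\item Let $Z$ be the cycle through $0,3,2,1$, so $Z^+=\{1\to0,0\to3\}$ has total length $8$ and $Z^-=\{1\to2,2\to3\}$ has total length $2$. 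The recipe after Lemma~\ref{lem:deletion} produces this cycle from the spanning tree $\theta\setminus\{1\to2,3\to4\}$ and the violating edge $1\to2$.
\item Let $F=\{1\to0,0\to5,5\to4\}$. It is balanced, with lift $1\to6\to11\to16$.
\item With $X=\{0,1,4,5\}$, the directed cycles $X\to3\to X$ and $X\to2\to3\to X$ of $Q$ have gains $-6$ and $-12$.
\item Since $c(F)=3$, $c(B)=2$ and $c(\theta)=1$, we get $\chi(B,F)=-1$ and $\chi(\theta,F)=+1$.
\item Both $\chi(B\cup\{1\to2\},F)$ and $\chi(B\cup\{2\to3\},F)$ vanish, because the added edge lies on no directed cycle of $Q$.
\end{itemize}
The alternating sum is zero, but the only contributing sets are $J=\emptyset$ and $J=Z^-$. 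They differ in two edges and cancel only because $c$ drops from $2$ to $1$. At $J=\emptyset$ no single toggle stays in the support.

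The missing idea is to let Lemma~\ref{lem:orderedPartitions} absorb exactly this troublesome part. It rewrites $(-1)^{c(Q(J))}$ times the cycle-membership condition as $\sum_\omega(-1)^{k(\omega)}[\omega\text{ compatible with }Q(J)]$. Compatibility is edge-local, while negativity of cycles remains a global condition. For fixed $\omega$ the remaining sign is just $(-1)^{|J|}$, so toggles now only need to preserve negativity. There are two cases.
\begin{itemize}
\item If a compatible optional edge goes strictly upwards between blocks, it lies on no cycle of any compatible graph, so toggling it is harmless.
\item Otherwise the closed walk of $Z$ stays inside one block. Let $\tau(y)$ be the maximum of $0$ and the gains of walks in $Q(J)$ ending at $y$. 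Telescoping $\tau$ around the walk, against its positive gain, yields an optional edge $e\colon u\to v$ with $\tau(v)-\tau(u)>\delta(e)$. Toggling $e$ keeps all cycles negative and leaves $\tau$ unchanged, so toggling the first such edge is an involution.
\end{itemize}
The toggled edge depends on $J$, so the paper never obtains the product factorization you aim for; negativity is never encoded edge by edge.
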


\begin{proof}
We proceed by induction on $|\theta|$.
If $\theta$ is balanced, then $\chi(\theta,F)=[F=\theta]$.
Otherwise choose a cycle as after Lemma~\ref{lem:deletion}.
That lemma expresses both values of $\theta$ through the sets $B\cup J$ with
$J\subsetneq Z^-$, and by induction it suffices to prove
\begin{equation}\label{eq:coefficientCancellation}
 \sum_{J\subseteq Z^-}(-1)^{|J|}\chi(B\cup J,F)=0
 \qquad\text{for all } F\subseteq\theta.
\end{equation}

Fix $F$; we may assume that $F$ is balanced, with a potential $p$.
Contract the components of $F$ and let $Q$ be the resulting multigraph with gains.
Its edges are the edges of $B\setminus F$, which we call required, and the
edges of $Z^-\setminus F$, which we call optional.
Only the sets $J$ containing $F\cap Z^-$ contribute.
Write $Q(J)$ for the graph of the required edges together with the optional
edges in $J$.
Since $c(B\cup J)=c(Q(J))$, the identity~\eqref{eq:coefficientCancellation}
says, up to a common sign, that the sum of $(-1)^{|J|+c(Q(J))}$ over the sets
$J\subseteq Z^-\setminus F$ for which every directed cycle of $Q(J)$ has negative
gain and every edge of $Q(J)$ lies on a directed cycle vanishes.
The cycle $Z$ projects to a closed walk in $Q$ which follows the required edges
forwards and the optional edges backwards, and since the potential
telescopes, its gain satisfies
\begin{equation}\label{eq:walkGain}
 \sum_{\text{required}}\delta-\sum_{\text{optional}}\delta
 =\sum_{Z^+}\ell-\sum_{Z^-}\ell>0,
\end{equation}
the sums on the left ranging over the edges of the walk.

By Lemma~\ref{lem:orderedPartitions}, this sum equals the sum over ordered
partitions $\omega$ of $(-1)^{k(\omega)}$ times the sum of $(-1)^{|J|}$ over the
sets $J$ such that $\omega$ is compatible with $Q(J)$ and every directed cycle of
$Q(J)$ has negative gain.
Fix $\omega$, and assume that the required edges are compatible with $\omega$,
since otherwise the inner sum is empty.
If an optional edge compatible with $\omega$ goes strictly upwards between blocks,
it lies on no cycle of a graph compatible with $\omega$, and toggling it is a
sign-reversing involution.
Otherwise every compatible optional edge stays in one block.
Along the closed walk, required edges do not decrease the block index,
compatible optional edges traversed backwards keep it, and incompatible optional
edges traversed backwards increase it strictly.
The total change is zero, so the walk lies in a single block, and it remains to
show that the sum of $(-1)^{|J|}$ over the sets $J$ of compatible optional edges
for which every directed cycle of $Q(J)$ has negative gain vanishes.

For such $J$, the gains of directed walks in $Q(J)$ ending at a vertex $y$ are
bounded, and we let $\tau(y)$ be the maximum of $0$ and these gains.
Then $\tau(v)-\tau(u)\geq\delta(e)$ for every edge $e$ from $u$ to $v$ of $Q(J)$.
Summing $\tau(v)-\tau(u)$ around the closed walk gives zero, so
by~\eqref{eq:walkGain} some optional edge $e$ of the walk, from $u$ to $v$, has
$\tau(v)-\tau(u)>\delta(e)$.
A directed cycle through such an edge has gain at most $\delta(e)+\tau(u)-\tau(v)<0$,
and every directed walk ending at a vertex $y$ and containing $e$ has gain
strictly less than $\tau(y)$.
Hence adding or removing $e$ keeps all directed cycles negative and does not change
$\tau$.
Toggling the first such edge, in a fixed order of the optional edges, is
therefore a sign-reversing involution, which proves the proposition.
\end{proof}

\subsection{Constancy on flip classes}

Let $C$ be a flip class of $\Gamma$, and for an admissible orientation $\eta$ put
\[
 a_C(\eta)=\sum_{\alpha\in C}\chi(\alpha,\eta).
\]

\begin{lemma}\label{lem:constancy}
The function $a_C$ is constant on every flip class.
\end{lemma}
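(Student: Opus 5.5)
The plan is to reduce to a single flip of the argument: it suffices to show $a_C(\eta)=a_C(\eta')$ whenever $\eta$ and $\eta'$ differ by a flip at lifted vertices $u<v<w<u+n$, with long edge $uw$, $uv\in\eta$ and $vw\in\eta'$. I will do this by constructing a bijection between the orientations $\alpha\in C$ with $\chi(\alpha,\eta)\neq0$ and the orientations $\alpha'\in C$ with $\chi(\alpha',\eta')\neq0$, given by $\alpha'=\alpha-uv+vw$, and showing that it preserves $\chi$. We may assume $\eta$ is balanced, since otherwise $\chi(\cdot,\eta)=0$. The first step is then to check that $\eta'$ is balanced with the same potential $p$. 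On the lift the lengths are additive, $\ell(uw)=\ell(uv)+\ell(vw)$, so $p(v)=p(u)+\ell(uv)=p(w)-\ell(vw)$. The connected components of $\eta$ and $\eta'$ also coincide (Lemma~\ref{lem:flipInvariants}).

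The key tool is a zero-gain observation. Suppose $\chi(\alpha,\eta)\neq0$, and let $x,y$ lie in one component of $\eta$. Then no directed path in $\widetilde\alpha$ from a lift of $x$ to a lift of $y$ can use an edge of $\alpha\setminus\eta$ while having the same total length as an edge or path of $\eta$ between those lifts. Indeed, such a path projects in the contracted graph $Q$ to a nonempty closed walk of gain $0$, made of edges of $Q$. That walk decomposes into directed cycles whose gains sum to $0$, contradicting the requirement that every directed cycle of $Q$ have negative gain.

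I will apply this observation three times.

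1. If $vw\in\alpha$, then $vw$ is a loop of $Q$ of gain $\ell(vw)-(p(w)-p(v))=0$, so $\chi(\alpha,\eta)=0$. Hence $vw\notin\alpha$ for contributing $\alpha$, and $\alpha'$ is a genuine orientation containing $\eta'$.
2. A bypass of $uv$ or $uw$ in $\widetilde\alpha$ cannot lie in $\widetilde\eta$, because these edges are covers of $\widetilde\eta$. So it uses an edge of $\alpha\setminus\eta$, and has the same endpoints and hence the same total length as the bypassed edge. This contradicts the observation, so $uv$ and $uw$ are covers of $\widetilde\alpha$.
3. The same argument for $\alpha'$ and $\eta'$ gives that $vw$ and $uw$ are covers of $\widetilde{\alpha'}$. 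Here a bypass of $vw$ or $uw$ in $\widetilde{\alpha'}$ that avoids $\widetilde{\eta'}$-internal routes must use an edge of $\alpha'\setminus\eta'=\alpha\setminus\eta$.

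Together, items 2 and 3 give the four cover conditions of the circular flip definition, so $\alpha$ and $\alpha'$ differ by a flip and $\alpha'\in C$. For item 3, admissibility of $\alpha'$ must hold before the cover conditions make sense. I would first derive it by the argument of Lemma~\ref{lem:ineffective}(i), or simply as a consequence of $\alpha'$ being in $C$ once the cover conditions are verified on the lift.

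Next, the contracted graphs agree. The components of $\alpha$ and $\alpha'$ are equal, and so are those of $\eta$ and $\eta'$. Moreover $\alpha\setminus\eta=\alpha'\setminus\eta'$ as edge sets, with identical gains because the potential is the same. Hence $Q$ is literally the same multigraph with the same gains, and therefore $\chi(\alpha,\eta)=\chi(\alpha',\eta')$. The map $\alpha'\mapsto\alpha'-vw+uv$ is the inverse, by the symmetric argument. Summing over $\alpha\in C$ then gives $a_C(\eta)=a_C(\eta')$.

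I expect the main obstacle to be item 3. In the circular setting the cover conditions for $\alpha'$ are not automatic as they are in Lemma~\ref{lem:flipCriterion}, and one must check carefully that every bypass in $\widetilde{\alpha'}$ really involves an edge outside $\eta'$ and closes to a zero-gain walk in $Q$. This includes bypasses through the new edge $vw$, which need to be rerouted via the $\eta'$-path $u\to w$ or $u\to v$.
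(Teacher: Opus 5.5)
Your proof follows the paper's: the same bijection $\alpha\mapsto\alpha-uv+vw$ between the contributing elements of $C$, and $\chi$ is preserved for the same reason. Namely, $\eta$ and $\eta'$ have the same components and the same potential, and $\alpha'\setminus\eta'=\alpha\setminus\eta$, so the contracted graph $Q$ is literally unchanged. The difference is the certificate you use.
\begin{itemize}
\item The paper perturbs the gains by $1/(m+1)$ and builds a strict potential $z=p+\tau$ with $z(y)-z(x)\geq\ell(e)$ on $\alpha$, with equality exactly on $\eta$. From this, $vw\notin\alpha$, the four covers and the admissibility of $\alpha'$ all follow at once.
\item You instead decompose a zero-gain closed walk of $Q$ into directed cycles. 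That works for items 1 and 2.
\end{itemize}
It also works for item 3, which is not really an obstacle. Since $vw\in\eta'$ is contracted, no rerouting is needed: a bypass in $\widetilde{\alpha'}$ either lies in $\widetilde{\eta'}$, contradicting the covers of the flip $\eta\leftrightarrow\eta'$, or it uses an edge of $\alpha\setminus\eta$ and closes to a zero-gain walk in $Q'=Q$. You only need to establish $Q'=Q$ before item 3 rather than after it.

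The one step that is not justified is the admissibility of $\alpha'$, and neither route you offer works.
\begin{itemize}
\item Deducing it from $\alpha'\in C$ is circular, because flips are only defined between admissible orientations. (The cover conditions themselves make sense without admissibility, since lifted edges point upward.)
\item Lemma~\ref{lem:ineffective}(i), applied to $\alpha-uv$ and $\xi=\{vw\}$, needs $\widetilde{\hrvop}(\alpha-uv,w)\leq\widetilde{\hrvop}(\alpha-uv,v)$. This fails already when $\alpha=\eta$ is a tree in which $v$ is a sink and $w$ has an outgoing edge.
\end{itemize}
The fix is your own observation with ``gain $\geq0$'' in place of ``gain $=0$''.
\begin{itemize}
\item A directed cycle of $\alpha'$ lifts to a path from some $z$ to $z+kn$ with $k\geq1$. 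Its total length is therefore $kn>0$, while $p$ telescopes to zero around it.
\item The cycle cannot lie in $\eta'$, because a balanced set contains no directed cycle.
\item Hence it projects to a nonempty closed walk of $Q$ with gain $kn>0$. That walk contains a directed cycle of positive gain, contradicting $\chi(\alpha,\eta)\neq0$.
\end{itemize}
Alternatively, import the paper's function $z$, which increases along every edge of $\alpha'$. With this step repaired, your argument is complete.
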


\begin{proof}
Let $\theta$ and $\eta$ differ by a flip which exchanges $ab$ for $bc$ across
$ac$, in a window $a<b<c<a+n$ of the lift.
Let $\alpha\in C$ with $\chi(\alpha,\theta)\neq0$, so $\theta\subseteq\alpha$.
Choose a potential $p$ of $\theta$ and contract the components of $\theta$.
In the contracted graph, every directed cycle formed by edges of
$\alpha\setminus\theta$ has negative gain, and since gains are integers, it is at
most $-1$.
If $m$ is the number of contracted vertices, the gains $\delta(e)+1/(m+1)$ still
give every such cycle negative gain: a simple directed cycle has at most $m$
edges, and
\[
 -1+\frac{m}{m+1}<0.
\]
As in the proof of
Proposition~\ref{prop:balancedExpansion} there is a function $\tau$ on the
contracted vertices with $\tau(v)-\tau(u)\geq\delta(e)+1/(m+1)$ for these edges.
Then $z(v)=p(v)+\tau(v)$ satisfies $z(v)-z(u)=\ell(e)$ for the edges of $\theta$ and
$z(v)-z(u)>\ell(e)$ for the edges of $\alpha\setminus\theta$.
Since $ab,ac\in\theta$ and $ac$ is a cover, we have $bc\notin\theta$.
Moreover,
\[
 z(c)-z(b)=\ell(ac)-\ell(ab)=\ell(bc),
\]
so $bc$ cannot belong to $\alpha\setminus\theta$, where the inequality for $z$
is strict.
Thus $bc\notin\alpha$.
Let $\alpha'=\alpha-ab+bc$.

A lifted path in $\alpha$ between the endpoints of $ab$ has total length
$\ell(ab)$ and total increase of $z$ equal to $\ell(ab)$, so all its edges satisfy
the equality, that is, they lie in $\theta$.
Since $ab$ and $ac$ are covers for $\theta$, they are covers for $\alpha$, and
similarly $bc$ and $ac$ are covers for $\alpha'$, using $\eta$.
The function $z$ increases along every edge of $\alpha'$, so $\alpha'$ is admissible,
and $\alpha'$ differs from $\alpha$ by a flip; hence $\alpha'\in C$.
The orientations $\theta$ and $\eta$ have the same components and the same
potential, and $\alpha'\setminus\eta=\alpha\setminus\theta$, so
$\chi(\alpha',\eta)=\chi(\alpha,\theta)$.
The same construction from $\eta$ inverts $\alpha\mapsto\alpha'$, which proves
$a_C(\theta)=a_C(\eta)$.
\end{proof}

Summing Proposition~\ref{prop:balancedExpansion} over $C$ and grouping by flip
classes $C'$, we obtain
\begin{equation}\label{eq:grouping}
 \Kv(C)=\sum_{C'} a_C(C')\,\Kv(C'),\qquad \Ev(C)=\sum_{C'} a_C(C')\,\Ev(C'),
\end{equation}
where $a_C(C')$ denotes the common value on $C'$, and $a_C(C')=0$ unless $C'$
contains a balanced orientation.

\subsection{Balanced classes are ordinary}

\begin{proposition}\label{prop:balancedClass}
Let $C$ be a flip class containing a balanced orientation.
Then $\Kv(C)=\Ev(C)$.
\end{proposition}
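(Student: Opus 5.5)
We plan to reduce Proposition~\ref{prop:balancedClass} to the ordinary case of Theorem~\ref{thm:main}. The reduction is carried out componentwise, by drawing a balanced orientation on a line. Let $\theta_0\in C$ be balanced. By Lemma~\ref{lem:flipInvariants}, every $\theta\in C$ has the same connected components $B_1,\dotsc,B_r$ of its underlying undirected graph.

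\textbf{Step 1: balance is preserved by flips.} A flip exchanges $ab$ for $bc$ across $ac$, with $\ell(ab)+\ell(bc)=\ell(ac)$ in the window $a<b<c<a+n$. If $p$ is a potential for $\theta$, then $p(b)-p(a)=\ell(ab)$ and $p(c)-p(a)=\ell(ac)$. Hence $p(c)-p(b)=\ell(bc)$, so the same $p$ is a potential for $\theta'$. Inductively, every orientation in $C$ is balanced with one fixed potential $p$, defined on each component.

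\textbf{Step 2: drawing the class on a line.} For each component $B_i$, the lift of $\theta$ restricted to that component is a disjoint union of translates of one finite graph. We take the translate $\widetilde B_i=\{p(u)\}$, normalised so that the lifted edges of $\theta$ on $B_i$ go from $p(u)$ to $p(v)$. This set has distinct integers, since a path cannot visit two lifts of one residue. We then relabel the vertices of $\widetilde B_i$ by their order. The natural edges of $\Gamma$ between vertices of $\widetilde B_i$ whose lifts respect the potential form an ordinary unit-interval graph $\Gamma_i$. This holds because lifted natural edges are closed under subintervals, and any intermediate integer of $\widetilde B_i$ inside a lifted edge gives lifted subedges.

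We expect the main obstacle to be checking that flips of $\Gamma$ inside $C$ correspond exactly to flips of the ordinary graph $\Gamma_i$. The vertex $b$ of a flip must be the point $p(b)$ of the same translate. This follows from Step 1, since $p(b)=p(a)+\ell(ab)$. Cover conditions in $\widetilde\theta$ are tested on paths inside one translate, which are exactly paths in $\Gamma_i$. Conversely, a flip of $\Gamma_i$ uses natural edges compatible with $p$, so it is a flip of $\Gamma$. One must also make sure that edges of $\Gamma_i$ absent from every member of $C$ do not matter. They do not, because flip classes of $\Gamma_i$ consist only of orientations whose edges respect $p$. Thus $C$ is identified with a product $C_1\times\dotsm\times C_r$ of flip classes of ordinary graphs $\Gamma_i$. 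Circular highest reachable vertices are the residues of the maximal reachable lifts. These lie in the same translate, so the fibres correspond to the fibres in $\Gamma_i$.

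\textbf{Step 3: conclusion.} Colorings factor over components, so $\Kp_\theta=\prod_i\Kp_{\theta|_{B_i}}$, and likewise $\elementaryE_{\lambda(\theta)}=\prod_i\elementaryE_{\lambda(\theta|_{B_i})}$. Hence $\Kv(C)=\prod_i\Kv(C_i)$ and $\Ev(C)=\prod_i\Ev(C_i)$. The ordinary case of Theorem~\ref{thm:main}, already proved, gives $\Kv(C_i)=\Ev(C_i)$ for each $i$, and therefore $\Kv(C)=\Ev(C)$.
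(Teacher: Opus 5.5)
Your proposal is correct and is essentially the paper's argument: $p$ remains a potential along flips, the class is drawn on a line by the values of $p$, covers, colorings and highest reachable lifts correspond, and the ordinary case of Theorem~\ref{thm:main} concludes. The only difference is organizational: the paper shifts $p$ so that $p(v)\equiv v\pmod n$ on every component and uses one ordinary graph $\Gamma'$ on all $n$ ranks, needing only that the preimage of $C$ is closed under ordinary flips, whereas you build one graph $\Gamma_i$ per component and use multiplicativity over components (incidentally, the points of $\widetilde B_i$ are distinct because of this normalization $p(u)\equiv u\pmod n$, which is possible since $p(v)-v$ is constant modulo $n$ on a component, rather than because of your remark about paths).
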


\begin{figure}[!ht]
\centering
\begin{tikzpicture}[x=3em,y=3em]
\node[vtx,fill=white] (a) at (90:1) {$1$};
\node[vtx,fill=white] (b) at (210:1) {$2$};
\node[vtx,fill=white] (c) at (330:1) {$3$};
\draw[sel] (a) to[bend left=25] (c);
\draw[sel] (c) to[bend left=25] (b);
\begin{scope}[xshift=11em,yshift=-0.5em]
\draw[llt arbitrary,line width=0.6pt] (0.4,0)--(5.6,0);
\foreach \z in {1,...,5}{\node[vtx,fill=white,minimum size=1.05em,font=\scriptsize]
  (z\z) at (\z,0) {$\z$};}
\draw[sel] (z1) to[bend left=40] (z3);
\draw[sel] (z3) to[bend left=40] (z5);
\end{scope}
\end{tikzpicture}
\caption{The balanced orientation $F=\{1\to3,3\to2\}$ of
Example~\ref{ex:deletion}, with potential values $p(1)=1$, $p(3)=3$ and
$p(2)=5$.
These values select the lifts $1$, $3$ and $5$, which form the path
$1\to3\to5$.
Ordering the original vertices by these values gives an ordinary path on three
vertices, with both edges ascending.}
\label{fig:balancedLine}
\end{figure}

\begin{proof}
Let $p$ be a potential of a balanced orientation in $C$, see
Figure~\ref{fig:balancedLine}.
On each component, $p(v)-v$ is constant modulo $n$, so changing $p$ by constants
we may assume that $p(v)\equiv v\pmod n$ and that $p$ is injective.
The relation $\ell(ac)=\ell(ab)+\ell(bc)$ shows that $p$ is a potential for every
orientation in $C$.
Order the vertices by the values of $p$, and let $\Gamma'$ be the ordinary graph on
these ranks in which a pair is a natural edge if the corresponding lifts
$p(u)<p(v)$ form a lifted natural edge.
It is a unit-interval graph since the lifted natural edges are closed under
subintervals.

An orientation $\xi$ of $\Gamma'$ maps to the circular orientation consisting of the
residues of its lifted edges.
A lifted path starting at $p(u)$ only visits the chosen lifts $p(w)$, since
$p$ is injective modulo $n$.
Hence the image is admissible, its covers correspond to those of
$\Pplus{\xi}$, its colorings correspond to those of $\xi$, and its highest
reachable lifts are the values $p(\hrv{\xi}{u})$.
So the map preserves $\Kv$ and $\Ev$, and maps ordinary flips to circular flips;
the window has length less than $n$ since the long edge is natural.
Every orientation in $C$ is the image of some $\xi$, since $p$ is a potential for
it.
Hence the set of $\xi$ whose image lies in $C$ is closed under ordinary flips,
and is a union of ordinary flip classes, on which the ordinary case of
Theorem~\ref{thm:main} gives $\Kv=\Ev$.
\end{proof}

\begin{proof}[Proof of Theorem~\ref{thm:main} for circular graphs]
By~\eqref{eq:grouping} and Proposition~\ref{prop:balancedClass},
$\Kv(C)=\sum_{C'}a_C(C')\Kv(C')=\sum_{C'}a_C(C')\Ev(C')=\Ev(C)$.
\end{proof}

\section{Consequences}\label{sec:consequences}

\subsection{The elementary expansion}

\begin{proof}[Proof of Corollary~\ref{cor:circularExpansion}]
By Lemma~\ref{lem:stateExpansion}, and since $\asc$ is constant on flip classes,
\[
 \LLT_\Gamma(\xvec;q+1)=\sum_{\theta}q^{\asc(\theta)}\Kp_{\theta}(\xvec)
 =\sum_Cq^{\asc(C)}\Kv(C),
\]
the second sum ranging over the flip classes.
By Theorem~\ref{thm:main} this equals
\[
 \sum_Cq^{\asc(C)}\Ev(C)=\sum_\theta q^{\asc(\theta)}\elementaryE_{\lambda(\theta)}(\xvec).
\qedhere
\]
\end{proof}

Thus the coefficient of $q^k\elementaryE_\lambda$ in $\LLT_\Gamma(\xvec;q+1)$ is the
number of admissible orientations $\theta$ with $\asc(\theta)=k$ and
$\lambda(\theta)=\lambda$, and every flip class contributes a sum of such terms.
Substituting $q-1$ for $q$, we also have
$\LLT_\Gamma(\xvec;q)=\sum_\theta(q-1)^{\asc(\theta)}\elementaryE_{\lambda(\theta)}(\xvec)$,
which is the form in which the expansion was stated
in~\cite{AlexanderssonSulzgruber2022}.
For ordinary graphs, Corollary~\ref{cor:circularExpansion} is
Theorem~\ref{thm:AS}, and the proof above does not use the recursion
of~\cite{AlexanderssonSulzgruber2022}.
For circular graphs it answers the question
of~\cite[Sec.~6.8]{AlexanderssonSulzgruber2022}.

\subsection{LLT cumulants}

We now refine the elementary expansion of LLT cumulants of M.~Dołęga and
M.~Kowalski~\cite{DolegaKowalski2021x} to flip classes, for circular graphs.
Let $\Gamma$ be a circular decorated unit-interval graph and $U$ a nonempty set of
residues.
List the lifts of the elements of $U$ increasingly as $(u_j)_{j\in\setZ}$, so that
$u_{j+m}=u_j+n$ with $m=|U|$, and put $b'(j)=\min\{k:u_k\geq b(u_j)\}$.
Then $b'$ is weakly increasing, $j-m<b'(j)\leq j$, and $b'(j+m)=b'(j)+m$, so it
defines a circular unit-interval graph on $U$ whose lifted natural edges are the
pairs $u_ku_j$ with $b(u_j)\leq u_k<u_j$.
A strict edge $u_ku_j$ of $\Gamma$ remains a corner, since $b(u_j)=u_k<b(u_j+1)$ gives
$b'(j)=k<b'(j+1)$.
We call the resulting graph the \defin{induced} graph $\Gamma[U]$.
Its natural edges are exactly the natural edges of $\Gamma$ with both endpoints in
$U$, and its strict edges are the strict edges of $\Gamma$ inside $U$.

Let $r\geq1$, and let $f:\setZ/n\setZ\to[r]$ be surjective and satisfy
$f(u)=f(v)$ for every strict edge $uv$.
For nonempty $B\subseteq[r]$ write $\Gamma[B]$ for the graph induced on
$f^{-1}(B)$.
Let $\Pi_r$ be the lattice of set partitions of $[r]$, with maximum $\hat1=\{[r]\}$.
Following~\cite{DolegaKowalski2021x}, the \defin{LLT cumulant} is
\[
 \defin{\kappa_{\Gamma,f}(\xvec;q)}\coloneqq(q-1)^{1-r}\sum_{\pi\in\Pi_r}
 (-1)^{|\pi|-1}(|\pi|-1)!\prod_{B\in\pi}\LLT_{\Gamma[B]}(\xvec;q).
\]
For an admissible orientation $\theta$, let $\sigma_f(\theta)$ be the set partition of
$[r]$ into the connected components of the undirected multigraph on $[r]$ with an
edge $f(u)f(v)$ for every edge $uv$ of $\theta\cup S$.
We call $\theta$ \defin{$f$-connected} if $\sigma_f(\theta)=\hat1$.

\begin{proposition}[{cf.~\cite[Thm.~3.8]{DolegaKowalski2021x}}]\label{prop:cumulant}
We have
\[
 \kappa_{\Gamma,f}(\xvec;q+1)=\sum_{\theta}q^{\asc(\theta)+1-r}\,\Kp_{\theta}(\xvec),
\]
the sum ranging over the $f$-connected admissible orientations of $\Gamma$, and all
exponents are nonnegative.
\end{proposition}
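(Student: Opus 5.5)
We substitute Lemma~\ref{lem:stateExpansion} into each factor of the cumulant and then apply Möbius inversion on the partition lattice $\Pi_r$. For every nonempty $B\subseteq[r]$, the admissible orientations of $\Gamma[B]$ are exactly the restrictions of admissible orientations of $\Gamma$ to the natural edges inside $f^{-1}(B)$. Hence Lemma~\ref{lem:stateExpansion} gives
\[
 \LLT_{\Gamma[B]}(\xvec;q+1)=\sum_{\theta_B}q^{\asc(\theta_B)}\Kp_{\theta_B}(\xvec).
\]

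Fix $\pi\in\Pi_r$. A family $(\theta_B)_{B\in\pi}$ of orientations of the induced graphs is the same as an admissible orientation $\theta$ of $\Gamma$ whose edges $uv$ in $\theta\cup S$ all satisfy $f(u),f(v)$ in a common block of $\pi$. That is, $\sigma_f(\theta)\leq\pi$.

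Here the hypothesis that $f$ is constant along strict edges is used. Every strict edge of $\Gamma$ lies inside some $\Gamma[B]$, so the strict edges impose no constraint linking different blocks. Also, $\theta$ is admissible precisely when each $\theta_B$ is: a directed cycle of $\theta\cup S$ stays inside one block.

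Since the vertex sets $f^{-1}(B)$ are disjoint and no edge of $\theta\cup S$ joins two blocks, the poset $\Pplus{\theta}$ is the disjoint union of the posets $\Pplus{\theta_B}$. Therefore $\Kp_\theta=\prod_B\Kp_{\theta_B}$ and $\asc(\theta)=\sum_B\asc(\theta_B)$. This yields
\[
 \prod_{B\in\pi}\LLT_{\Gamma[B]}(\xvec;q+1)=\sum_{\theta:\ \sigma_f(\theta)\leq\pi}q^{\asc(\theta)}\Kp_\theta(\xvec).
\]

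Substituting into the definition of the cumulant with $q\mapsto q+1$, we get
\[
 \kappa_{\Gamma,f}(\xvec;q+1)=q^{1-r}\sum_\theta q^{\asc(\theta)}\Kp_\theta(\xvec)\sum_{\pi\geq\sigma_f(\theta)}(-1)^{|\pi|-1}(|\pi|-1)!.
\]
The coefficient $(-1)^{|\pi|-1}(|\pi|-1)!$ is the Möbius function $\mu(\pi,\hat1)$ of $\Pi_r$. By the defining property of the Möbius function, the inner sum equals $[\sigma_f(\theta)=\hat1]$, which is exactly the $f$-connectedness condition. This proves the displayed formula.

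It remains to show the exponents are nonnegative. If $\theta$ is $f$-connected, then the edges of $\theta\cup S$ connect the $r$ labels, so at least $r-1$ of them join distinct labels $f(u)\neq f(v)$. Strict edges have $f(u)=f(v)$, so these connecting edges are all ascending. Hence $\asc(\theta)\geq r-1$, and $\asc(\theta)+1-r\geq0$.

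The main point needing care is the restriction bijection between orientations of $\Gamma$ with $\sigma_f(\theta)\leq\pi$ and families of orientations of the $\Gamma[B]$, in the circular setting. Two facts must be checked. First, the lifted natural edges of $\Gamma[B]$ are exactly the lifted natural edges of $\Gamma$ inside $f^{-1}(B)$; this was noted in the construction of the induced graph. Second, $\Kp$ factorizes over the blocks, which holds because colorings of different blocks are independent.
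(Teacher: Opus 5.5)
Your proof is correct and is essentially the paper's argument: you identify tuples of orientations of the graphs $\Gamma[B]$ with admissible orientations $\theta$ of $\Gamma$ satisfying $\sigma_f(\theta)\leq\pi$, apply Lemma~\ref{lem:stateExpansion} blockwise, collapse the sum over $\pi$ using $\mu(\pi,\hat1)=(-1)^{|\pi|-1}(|\pi|-1)!$, and get $\asc(\theta)\geq r-1$ because strict edges become loops. One small blemish: your opening claim that admissible orientations of $\Gamma[B]$ are exactly restrictions of admissible orientations of $\Gamma$ fails if the strict edges in another fibre form a directed cycle (then $\Gamma$ has no admissible orientations at all), but nothing depends on it, since Lemma~\ref{lem:stateExpansion} applies to $\Gamma[B]$ directly and the argument actually uses your blockwise admissibility observation.
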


\begin{proof}
For $\pi\in\Pi_r$, the orientations of $\Gamma$ without ascending edges between
the sets $f^{-1}(B)$, $B\in\pi$, are the tuples of orientations of the graphs
$\Gamma[B]$, since no strict edge joins two such sets.
By Lemma~\ref{lem:stateExpansion} for each $\Gamma[B]$,
\[
 \prod_{B\in\pi}\LLT_{\Gamma[B]}(\xvec;q+1)
 =\sum_{\sigma_f(\theta)\leq\pi}q^{\asc(\theta)}\Kp_{\theta}(\xvec).
\]
Inserting this in the definition, the coefficient of a fixed $\theta$ is
$q^{\asc(\theta)+1-r}$ times
$\sum_{\sigma_f(\theta)\leq\pi}(-1)^{|\pi|-1}(|\pi|-1)!$.
Since $(-1)^{|\pi|-1}(|\pi|-1)!$ is the Möbius function of $\Pi_r$ from $\pi$ to
$\hat1$, this sum is $[\sigma_f(\theta)=\hat1]$.
Since $f(u)=f(v)$ for every strict edge $uv$, strict edges become loops in the
multigraph on $[r]$.
They do not connect different vertices, so an $f$-connected orientation has at
least $r-1$ ascending edges.
\end{proof}

By Lemma~\ref{lem:flipInvariants}, being $f$-connected is constant on flip classes.
Grouping Proposition~\ref{prop:cumulant} by flip classes and applying
Theorem~\ref{thm:main}, we obtain the following.

\begin{corollary}\label{cor:cumulant}
With the notation above,
\[
 \kappa_{\Gamma,f}(\xvec;q+1)=\sum_Cq^{\asc(C)+1-r}\sum_{\theta\in C}
 \elementaryE_{\lambda(\theta)}(\xvec),
\]
where $C$ ranges over the flip classes of $f$-connected admissible orientations.
In particular, every flip class contributes an elementary-positive summand.
\end{corollary}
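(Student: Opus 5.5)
The corollary will follow by grouping the expansion of Proposition~\ref{prop:cumulant} by flip classes and applying Theorem~\ref{thm:main} to each class. I would carry this out in three steps.

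First, I check that the index set of Proposition~\ref{prop:cumulant}, the $f$-connected admissible orientations, is a union of flip classes of $\Gamma$. By Lemma~\ref{lem:flipInvariants}, which holds verbatim for circular graphs, the connected components of the underlying undirected graph of $\theta\cup S$ are constant on a flip class. The partition $\sigma_f(\theta)$ is determined by these components: two blocks $f(u),f(v)$ are joined exactly when some component of $\theta\cup S$ meets both $f^{-1}(f(u))$ and $f^{-1}(f(v))$, after also merging along the fibres of $f$. Hence $\sigma_f$ is constant on flip classes, and in particular $f$-connectedness is. The same lemma gives that $\asc(\theta)=\asc(C)$ is constant on $C$. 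Therefore
\[
 \kappa_{\Gamma,f}(\xvec;q+1)=\sum_C q^{\asc(C)+1-r}\,\Kv(C),
\]
where $C$ ranges over the flip classes of $f$-connected admissible orientations. By Proposition~\ref{prop:cumulant}, the exponents satisfy $\asc(C)+1-r\geq0$.

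Second, I apply Theorem~\ref{thm:main} for circular graphs. It gives $\Kv(C)=\Ev(C)=\sum_{\theta\in C}\elementaryE_{\lambda(\theta)}$, which is the displayed formula.

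Third, the positivity statement is immediate. Each summand is a nonnegative power of $q$ times a sum of elementary symmetric functions $\elementaryE_{\lambda(\theta)}$ with coefficient one.

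The only step needing care is the first, namely that $\sigma_f$ depends only on the undirected components. A flip retains the long edge $uw$ and moves the short edge between $uv$ and $vw$. Both short edges attach $v$ to the component already containing $u$ and $w$, so the components of $\theta\cup S$ are unchanged. Consequently the connectivity of the multigraph on $[r]$ whose edges are the images of these edges is unchanged as well.
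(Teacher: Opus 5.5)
Your proposal is correct and follows the paper's argument. By Lemma~\ref{lem:flipInvariants}, $f$-connectedness (through the components of $\theta\cup S$) and $\asc$ are constant on flip classes, so Proposition~\ref{prop:cumulant} can be grouped by flip classes, and Theorem~\ref{thm:main} then turns each $\Kv(C)$ into $\Ev(C)$.
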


For ordinary graphs, summing over the classes recovers the elementary expansion
of~\cite[Thm.~3.17]{DolegaKowalski2021x}.

\subsection{Questions}

For a unit-interval graph without strict edges, the chromatic quasisymmetric
function and the unicellular LLT polynomial are related by a
plethysm, see~\cite[Lem.~6.17]{AlexanderssonSulzgruber2022}.
OpenAI proved that the elementary coefficients of the full chromatic
quasisymmetric function belong to
$\mathbb{N}[q]$~\cite[Thm.~1.1]{OpenAI2026}.
B.-H.~Hwang's stronger classwise conjecture remains open: each flip class of
acyclic orientations should contribute an elementary-positive
summand~\cite[Conj.~3.10]{Hwang2022x}.
Theorem~\ref{thm:main} therefore establishes the LLT classwise analogue before
its chromatic counterpart.

The two kinds of flip give different components.
Hwang reverses a source or sink fork whose outer vertices are nonadjacent,
whereas Definition~\ref{def:flip} reverses the two short edges of a triangle and
retains its long edge.
Moreover, Hwang's classes consist of acyclic orientations, while our classes
range over all orientations of the normal edges.
The plethystic relation does not preserve elementary positivity term by term:
already $(q-1)^{-1}\elementaryE_2[(q-1)\xvec]=(q+1)\elementaryE_2-\elementaryE_{11}$.

\begin{question}
Is there a direct relation between Hwang's flip classes and the flip classes of
Definition~\ref{def:flip} which transfers classwise elementary positivity
between the chromatic and LLT settings?
\end{question}

\begin{question}
Give a bijective proof of Theorem~\ref{thm:main}, that is, a bijection between the
strict $\Pplus{\theta}$-partitions for $\theta\in C$ and the strict partitions of the
chains of the highest reachable fibres for $\theta\in C$, preserving the multiset of
colors.
\end{question}

\subsection*{Acknowledgements}
The first named author thanks Robin Sulzgruber for the initial discussions about
flips.

\subsection*{AI assistance}
We used OpenAI Codex and Claude for proof development, Lean formalization, and
preparation of the text and diagrams.
Theorem~A and the elementary expansions have been verified in Lean.
The authors are responsible for the final mathematical content and exposition.

\bibliographystyle{amsalpha}
\bibliography{references}

\end{document}